\newcommand{\prd}{\mathscr{P}_2(\rd)}
\newcommand{\pCBrd}{\mathscr{P}_2(\rd\times\rds)}
\newcommand{\pTCBrd}{\mathscr{P}_2(\rd\times\rd\times\rds)}
\newcommand{\pW}[1]{\mathscr{P}_2(#1)}
\newcommand{\LTwo}[1]{\mathscr{L}_{2}(#1)}
\newcommand{\LTwoS}[1]{\mathscr{L}_{2}^*(#1)}
\newcommand{\letterEkapeps}[1]{{#1}_\varkappa^\varepsilon}
\newcommand{\letterkapeps}[3]{{#1}_\varkappa^\varepsilon[#2,#3]}
\newcommand{\pairkapeps}[2]{(\letterkapeps{t}{#1}{#2},\letterkapeps{\nu}{#1}{#2})}
\newcommand{\pairEkapeps}{(\letterEkapeps{t},\letterEkapeps{\nu})}
\title{Upper and lower bounds of the value function for optimal control in the Wasserstein space}
\author{Yurii Averboukh}\address{Krasovskii Institute of Mathematics and Mechanics, \\ & Yekaterinburg, Russia}
\email{ayv@imm.uran.ru} \author{Aleksei Volkov}\address{Krasovskii Institute of Mathematics and Mechanics, \\ & Yekaterinburg, Russia}
\email{volkov@imm.uran.ru} 
\date{}
\begin{document}
	\maketitle

	\begin{abstract}
	This paper explores the application of nonsmooth analysis in the Wasserstein space to finite-horizon optimal control problems for nonlocal continuity equations. We characterize the value function as a strict viscosity solution of the corresponding Bellman equation using the notions of $\varepsilon$-subdifferentials and $\varepsilon$-superdifferentials. The main paper's result is the fact that continuous subsolutions and supersolutions of this Bellman equation yield lower and upper bounds for the value function. These estimates rely on proximal calculus in the space of probability measures and the Moreau–Yosida regularization. Furthermore, the upper estimates provide a family of approximately optimal feedback strategies that realize the concept of proximal aiming.
	\keywords{optimal control, nonlocal continuity equation, nonsmooth analysis, proximal aiming, Hamilton-Jacobi equation}
	\msccode{49J52, 49L25, 46G05, 49K20, 82C22, 93C20, 49N35}
\end{abstract}
	
		\section{Introduction}
	\paragraph{Overview of the main results.} The aim of this paper is twofold. First, we develop nonsmooth analysis on the product of a finite-dimensional space and the space of probability measures (Wasserstein space). Then, we apply this framework to analyze an optimal control problem for a nonlocal continuity equation:
	\begin{equation}\label{intrdct:payoff:main}
		\text{minimize } \int_{s}^T L(t,m_t,u(t)),dt + G(m_T)
	\end{equation}
	subject to
	\begin{equation}\label{intrdct:eq:dynamics}
		\partial_t m_t + \operatorname{div}(f(t,x,m_t,u(t))m_t), \quad u(t) \in U, \quad m_s = \mu.
	\end{equation}
	Our interest in this problem is motivated by applications such as opinion dynamics, drone swarm control, modeling of pedestrian flows, etc. \cite{Piccoli2018, Fetecau2011, Carrillo2010, Naldi2010, Colombo2012, Dogbe2012, Mogilner1999, HUGHES2002507}.
	
	Our contributions to nonsmooth analysis on the product of a finite-dimensional space and the Wasserstein space concern proximal calculus and an analog of the Moreau–Yosida regularization. Since the Wasserstein space is not 
	$\sigma$-compact, it is more convenient to work with 
	$\varepsilon$-sub-/superdifferentials. Accordingly, we introduce directional and proximal 
	$\varepsilon$-sub-/superdifferentials. In our approach, directional sub-/superdifferentials are defined using shifts in the Wasserstein space given by square integrable functions, with the corresponding sub-/supergradients also being square integrable mappings. This concept refines the one proposed in~\cite{Badreddine2021}. In contrast, for proximal sub-/superdifferentials, we adopt the framework of~\cite[\S 10.3]{ambrosio}. Thus, elements of the proximal $\varepsilon$-sub-/superdifferentials consist of a scalar and a distribution over the cotangent bundle. Note that every proximal $\varepsilon$-subgradient induces a directional $\varepsilon$-subgradient, and the same holds for supergradients.
	
	In studying the Moreau–Yosida regularization, we employ the Ekeland principle to establish the existence of a pseudo-minimizer. Analogous to the finite dimensional case, we prove that this pseudo-minimizer yields an element of the proximal 
$\varepsilon$-subdifferential.
	
	To analyze the value function for \eqref{intrdct:payoff:main}, \eqref{intrdct:eq:dynamics}, we first examine the corresponding Bellman equation, which is a Hamilton-Jacobi PDE in the space of probability measures. Our approach relies on directional 
	$\varepsilon$-sub-/superdifferentials and leads to the notion of strict viscosity sub- and supersolutions. This concept originates from~\cite{Crandall1985}, where Hamilton-Jacobi equations were studied in Banach spaces. We revisit the characterization of the value function for the optimal control problem \eqref{intrdct:payoff:main}, \eqref{intrdct:eq:dynamics} slightly extending the results of~\cite{Badreddine2021} by showing that the value function is a strict viscosity solution of the associated Bellman equation (see Theorem~\ref{Bellman:th:characterization}).
	
	The main results concerning the value function involve upper and lower bounds. Here, we relax the definitions of sub- and supersolutions by requiring the corresponding inequalities to hold only for proximal $\varepsilon$-sub-/superdifferentials rather than all directional $\varepsilon$-sub-/supergradients. This leads to formally weaker notions of proximal sub- and supersolutions. We prove that every proximal subsolution of the Bellman equation provides a lower bound for the value function. Moreover, using such a proximal subsolution, one can construct a family of discontinuous feedback strategies ensuring a payoff no worse than this bound (see Theorem~\ref{feedback:th:u_kapeps}). The construction of this family relies heavily on the Moreau–Yosida regularization and its properties. Finally, we show that every proximal supersolution yields an upper bound for the value function. The proof, similar to that for the lower bound, essentially uses of the Moreau–Yosida regularization.

	\paragraph{Relative works.}
The literature on nonsmooth analysis is extensive. We refer to books~\cite{Mordukhovich2005,Penot2013} for the main concepts of this area in Banach spaces.

Analysis on the Wasserstein space is more complex due to its nonlinearity, though directions can be determined either by functions or by plans. Additionally, a probability measure with a finite $p$-th moment can be viewed as a push-forward measure induced by some random variable. This leads to the identification of the Wasserstein space with a quotient space of an appropriate $\mathscr{L}_p$-space. The notion of differentiability can be introduced using the so-called $L$-derivative, which is the gradient of a variational (flat) derivative of a function depending on a probability measure. An equivalent approach involves lifting of a function defined on the Wasserstein space to an appropriate $\mathscr{L}_p$-space~\cite{Cardaliaguet2019}. Several extensions of nonsmooth analysis concepts to the Wasserstein space have been developed. We refer to~\cite{ambrosio,Lanzetti2022,Lanzetti2024,Gangbo2019,Jimenez2020,Jimenez2023,Bonnet2019a}, where analogs of Fr\'{e}chet subdifferentials were introduced and discussed. Fr\'{e}chet subdifferentials can also be defined by lifting the function from the Wasserstein space to a $\mathscr{L}_p$-space (see~\cite{Gangbo2019,Jimenez2023} and references therein for this approach and its comparison with the intrinsic definition of Fr\'{e}chet subdifferentials).

Directional sub- and superdifferentials for the Wasserstein space were first introduced in~\cite{Badreddine2021}. Our approach differs slightly from the one presented there, as we consider test points given only by shifts along functions. The Moreau–Yosida regularization was introduced for the Wasserstein space in~\cite{Kim2013}, where it was applied to a Hamiltonian. Proximal calculus was extended to the Wasserstein space in~\cite{Volkov_Arxiv}, which also clarified the link between the Moreau–Yosida regularization and proximal subgradients under the assumption that the examined function is nonnegative. The results of our paper include proximal calculus on the product of a finite-dimensional space and the Wasserstein space, and we allow for general uniformly continuous functions.

Now, let us turn to the controlled nonlocal continuity equation. Recall that this equation serves as a mathematical model for a system consisting of many identical particles that interact nonlocally and are influenced by an external field. First, we mention papers dealing with necessary optimality conditions in the form of the Pontryagin maximum principle~\cite{Bonnet2021,Bonnet2019a,Bonnet2019,Pogodaev2016,Bongini2017}. The characterization of the value function as a viscosity solution of the corresponding Bellman equation is given in~\cite{Badreddine2021}, though the result is only proved for compactly supported measures. Sensitivity analysis of the value function for such systems was examined in~\cite{Bonnet2022}, and the existence of optimal control for the nonlocal continuity equation was addressed in~\cite{Bonnet2020}.

Note that control theory for the nonlocal continuity equation is closely related to mean field type control theory~\cite{Jimenez2020,Jimenez2023,Averboukh2025}. In this case, the system consists of rational agents who independently choose their controls to optimize a common payoff. Equivalently, one can assume that the entire system is governed by a force that depends discontinuously on the space variable~\cite{Cavagnari2022}. Mean-field-type control theory for deterministic systems is, in turn, closely connected to control problems for McKean-Vlasov equations. The value function for mean field type control is also characterized via a Bellman equation, which is a Hamilton-Jacobi PDE in the space of measures. Additionally, we note that there is extensive literature on viscosity solutions to Hamilton-Jacobi PDEs in measure spaces (see~\cite{Gangbo2019,Bayraktar03042025,Talbi2024} and references therein for the current state of research in this area).

We conclude this brief survey with works~\cite{Clarke1998,Clarke1998a,Clarke1999,Clarke2000}, where the technique of proximal aiming was developed. Our results concerning upper and lower bounds for the value function, as well as the construction of feedback strategies, follow this approach.
	
	\paragraph{Structure of the paper.} The general notation is introduced in Section~\ref{sect:preliminaries}. In Section~\ref{sect:nonsmooth}, we discuss various constructions of nonsmooth analysis for the product of a finite-dimensional space and the Wasserstein space. Here, we also introduce the Moreau–Yosida regularization and derive its properties.
	
	The statement of the optimal control problem for the nonlocal continuity equation is presented in Section~\ref{sect:control}. This section also includes the definition of the value function and its properties. The proof that the value function is the unique viscosity solution of the corresponding Bellman equation is given in Section~\ref{sect:Bellman}.
	
	The following section contains the construction of approximately optimal feedback strategies based on proximal aiming for the nonlocal continuity equation. This construction, in particular, provides an upper bound for the value function. Finally, lower bounds for the value function are derived in Section~\ref{sect:lb}.
	
	\section{General notation}\label{sect:preliminaries}
	\begin{itemize}
		\item If $X_1,\ldots,X_n$ are sets, $i_1,\ldots,i_k\in \{1,\ldots,n\}$, then $\operatorname{p}^{i_1,\ldots,i_k}$ is a projection operator from $X_1\times\ldots\times X_n$ to $X_{i_1}\times\ldots \times X_{i_k}$, i.e., $\operatorname{p}^{i_1,\ldots,i_k}(x_1,\ldots,x_n)\triangleq (x_{i_1},\ldots,x_{i_k})$. Furthermore, $\operatorname{Id}$ stands for the identical operator on $X$.
		\item If $(\Omega,\mathcal{F})$ and $(\Omega',\mathcal{F}')$ are measurable spaces, $h:\Omega\rightarrow\Omega'$ is a $\mathcal{F}/\mathcal{F}'$-measurable mapping, while $m$ is a measure on $\mathcal{F}$, then $h\sharp m$ stands for the push-forward measure of $m$ through $h$ that is measure on $\mathcal{F}'$ such that, for each $\Upsilon\in\mathcal{F}'$,
		\[(h\sharp m)(\Upsilon)\triangleq m(h^{-1}(\Upsilon)).\]
		\item If $(X,\rho_X)$ is a Polish space, $x_*\in X$, $C>0$, then $\mathbb{B}_C(x_*)$ denotes the closed ball of the radius $C$ centered at $x_*$. 
		\item Given $D\subset X$, $x\in X$, we let $\operatorname{dist}(x,D)\triangleq \inf\{\rho_X(x,y):\ y\in D\}$. 
		\item If $(X,\rho_X)$ is a metric space, then $C_b(X)$ is a space of real-valued bounded continuous functions defined on $X$. Moreover, $\operatorname{UC}(X)$ denotes the set of real-valued functions defined on $X$ those are uniformly continuous on each bounded subset of $X$. Notice that, if $\phi\in\operatorname{UC}(X)$, then $\phi$ is bounded on each bounded set. 
		\item $\operatorname{USCB}(X)$ (respectively, $\operatorname{LSCB}(X)$) denotes the space of functions $\phi:X\rightarrow \mathbb{R}$ those are upper semicontinuous (respectively, lower semicontinuous) and bounded from above (respectively, from below) on each bounded set.
		\item The Polish space $(X,\rho)$ is always equipped with the Borel $\sigma$-algebra, denoted by 
		$\mathcal{B}(X)$. The space of all (nonnegative) finite Borel measures on $X$ is denoted by 
		$\mathcal{M}(X)$, while $\mathcal{P}(X)$ represents the set of all Borel probability measures on 
		$X$. Additionally, $\delta_z$ denotes the Dirac measure concentrated at $z$. 
		\item On $\mathcal{M}(X)$, we consider the topology of narrow convergence. In this case, a sequence $\{m_n\}_{n=1}^\infty\subset \mathcal{P}(X)$ converges to a measure $m$ provided that, for each $\phi\in C_b(X)$,
		\[\int_X\phi(x)m_n(dx)\rightarrow\int_X\phi(x)m(dx).\]
		\item (disintegration theorem) If $(X_1,\rho_1)$, $(X_2,\rho_2)$ are Polish spaces, $\pi\in \mathcal{M}(X_1\times X_2)$, $i\in \{1,2\}$, $m\triangleq \operatorname{p}^i\sharp \pi$, then there exists a weakly measurable family of probabilities $(\pi(\cdot|x_i))_{x_i\in X_i}$ such that, for each $\phi\in C_b(X_1\times X_2)$,
		\begin{equation*}\label{prel:inro:disintegration}\int_{X_1\times X_2}\phi(x_1,x_2)\pi(d(x_1,x_2))=\int_{X_i}\int_{X_{3-i}}\phi(x_1,x_2)\pi(dx_{3-i}|x_i)m(dx_i).\end{equation*} Recall that a family of measures $(\pi(\cdot|x_i))_{x_i\in X_i}\subset \mathcal{P}(X_{3-i})$ is weakly measurable provided that, for each function $\phi\in C_b(X_{3-i})$, the mapping 
		\[X_i\ni x_i\mapsto\int_{X_{3-i}}\phi(x_{3-i})\pi(dx_{3-i}|x_i)\] is measurable. Notice that the disintegration is unique $m$-a.e., i.e., if $(\pi'(\cdot|x_i))_{x_i\in X_i}\subset \mathcal{P}(X_{3-i})$ is measurable and satisfies 
		\[\int_{X_1\times X_2}\phi(x_1,x_2)\pi(d(x_1,x_2))=\int_{X_i}\int_{X_{3-i}}\phi(x_1,x_2)\pi'(dx_{3-i}|x_i)m(dx_i),\] then $\pi(\cdot|x_i)=\pi'(\cdot|x_i)$ for $m$-a.e. $x_i\in X_i$.
		\item If $(X,\rho_X)$ and $(Y,\rho_Y)$ are Polish spaces, while $m\in\mathcal{M}(X)$, we denote by $\Lambda(X,m;Y)$ the set of all measures on $X\times Y$ whose marginal distributions in $X$ are equal to the measure $m$, i.e., $\xi\in\Lambda(X,m;Y)$ if and only if
		\[\operatorname{p}^1\sharp \xi=m.\] The space $\Lambda(X,m;Y)$ inherits the narrow topology from $\mathcal{M}(X\times Y)$. Notice that each measure $\xi\in\Lambda(X,m;Y)$ can be represented via its disintegration as a mapping $X\ni x\mapsto\xi(\cdot|x)\in\mathcal{P}(Y)$.

		\item $\mathscr{P}_2(X)$ is the space of all Borel probabilities on $X$ with a finite second moment, i.e.,
		\[\mathscr{P}_2(X)\triangleq \Bigg\{m\in\mathcal{P}(X):\,\int_X\rho_X^2(x,x_*)m(dx)<\infty\Bigg\}.\]
		Here, $x_*$ is a fixed point in $X$, while $X$ is a Banach space, we will assume that $x_*$ is the zero. 
		\item The space $\pW{X}$ is endowed with the second Wasserstein metric defined by the rule: if $m_1,m_2\in\pW{X}$, then 
		\[W_2(m_1,m_2)\triangleq \Bigg[\inf\Bigg\{\int_X\rho_X^2(x_1,x_2)\pi(d(x_1,x_2)):\,\pi\in\Pi(m_1,m_2)\Bigg\}\Bigg]^{1/2}.\] Here, $\Pi(m_1,m_2)$ denotes the set of all plans between $m_1,m_2$, i.e.,
		\[\Pi(m_1,m_2)\triangleq \Big\{\pi\in\mathcal{P}(X_1\times X):\, \operatorname{p}^i\sharp \pi=m_i,\, i=1,2\Big\}.\] 
		Below, with some abuse of notation, we put, for every $\pi\in \Pi(m_1,m_2)$,
		\[\|\pi\|\triangleq \Bigg[\int_{\rd\times\rd}\rho_X^2(x_1,x_2)\pi(d(x_1,x_2))\Bigg]^{1/2}.\]
		\item We assume that $\rd$ consists of $d$-dimensional column vectors; $\rds$ stands for the space of $d$-dimensional row vectors, the upper index $\top$ denotes the transpose operation.
		\item $\overline{\mathbb{R}}$ stands for the extended real line, i.e., $\overline{\mathbb{R}}\triangleq\mathbb{R}\cup\{+\infty,-\infty\}$.
		\item If $\phi:\rd\rightarrow \mathbb{R}$ is differentiable at $x$, then we denote by $\nabla\phi(x)$ the corresponding derivative that is assumed to be a row vector.
		\item If $X$ is subset of a finite dimensional space, then $C^1(X)$ denotes the set of all continuously differentiable functions defined on $X$, while $C^1_c(X)$ denote the set of continuously differentiable functions from $X$ to $\mathbb{R}$ with compact support.
		\item If $m$ is a probability measure on $\rd$, $\LTwo{m}$ (respectively, $\LTwoS{m}$) denotes the set of square integrable functions from $\rd$ to itself (respectively, from $\rd$ to $\rds$). If $p\in \LTwoS{m}$, $v\in \LTwo{m}$, then we put
		\[\langle p,v\rangle_m\triangleq \int_{\rd} s(x)v(x)m(dx).\] Furthermore, if $v\in \LTwo{m}$, then $v^\top\in\LTwoS{m}$ denotes the pointwise transpose of the function $v$. The same notation applies when $p\in \LTwoS{m}$. Notice that, if $v\in \LTwo{m}$,
		\[\|v\|_{\LTwo{m}}=\sqrt{\langle v^\top,v\rangle_m}.\] Analogously, for $p\in \LTwoS{m}$,
		\[\|p\|_{\LTwoS{m}}=\sqrt{\langle p,p^\top\rangle_m}.\]
	\end{itemize}

	\section{Nonsmooth analysis}\label{sect:nonsmooth}
	In what follows, we will work with the product spaces $\mathbb{R}\times\LTwo{\mu}$, $\mathbb{R}\times\LTwoS{\mu}$ for some $\mu\in\prd$. If $(\theta,v)\in \mathbb{R}\times\LTwo{\mu}$, then \[\|(\theta,v)\|_{\mathbb{R}\times\LTwo{\mu}}\triangleq \sqrt{\theta^2+\|v\|_{\LTwo{\mu}}^2}.\] Analogously, for $(a,p)\in \mathbb{R}\times\LTwoS{\mu}$,
	\[\|(a,p)\|_{\mathbb{R}\times\LTwoS{\mu}}\triangleq \sqrt{a^2+\|p\|_{\LTwoS{\mu}}^2}.\]
	\subsection{Sub- and superdifferentials} 
	
	Let $s\in (0,T)$, $\mu\in\prd$, $r>0$, and let $\phi:(s-r,s+r)\times \mathbb{B}_r(\mu)\rightarrow \overline{\mathbb{R}}$ be finite at $\mu$. First, we introduce analogs of the directional (Dini-Hadamard) sub- and superdifferentials (see \cite[\S 4.1.1]{Penot2013} for the definitions in the Banach space case). Due to the fact that $\prd$ is not $\sigma$-compact, we will use $\varepsilon$-sub- and superdifferentials.
	
	\begin{definition}\label{nonsmooth:def:directional:diffe}
	Let $\varepsilon\geq0$.	The directional $\varepsilon$-subdifferential $\partial_{D,\varepsilon}^-\phi(s,\mu)$ of the function $\phi$ at $(s,\mu)$ consists of all pairs $(a,p)\in \mathbb{R}\times \LTwoS{\mu}$ such that, for every $(\theta,v)\in\mathbb{R}\times \LTwo{\mu}$, 
		\begin{equation*}\label{nonsmooth:intro:directional_subdiff}
		\begin{split} \liminf_{h\downarrow 0, (\theta',v')\rightarrow (\theta,v)}\frac{1}{h}\Big[\phi(s+h\theta',(\operatorname{Id}+hv')\sharp \mu)-\phi(s,\mu)&-h(a\theta'+\langle p,v'\rangle_\mu)\Big]\\&\geq -\varepsilon\cdot \|(\theta,v)\|_{\mathbb{R}\times\LTwo{\mu}}.\end{split}
		\end{equation*} 
		
		Analogously, the directional $\varepsilon$-superdifferential $\partial_{D,\varepsilon}^+\phi(s,\mu)$ of the function $\phi$ at $(s,\mu)$ is the set of all pairs $((a,p)\in \mathbb{R}\times \LTwoS{m}) $ satisfying the following condition:
		\begin{equation*} \begin{split} \limsup_{h\downarrow 0, (\theta',v')\rightarrow (\theta,v)}\frac{1}{h}\Big[\phi(s+h\theta',(\operatorname{Id}+hv')\sharp \mu)-\phi(s,\mu)-&h(a\theta'+\langle p,v'\rangle_\mu)\Big]\\&\leq \varepsilon\cdot\|(\theta,v)\|_{\mathbb{R}\times\LTwo{\mu}}.\end{split}
		\end{equation*} 	
		
	\end{definition}
	
	Notice that in the definitions of the directional sub- and superdifferentials, we used directions in $\prd$ determined by squared integrable functions. This leads to a wide class of sub- and superdifferentials. They will be below utilized to characterize the value function in the same spirit as in~\cite{Badreddine2021}. However, when we discuss construction of approximately optimal feedbacks and the upper and lower bounds of the value function it is convenient to narrow the sub- and superdifferentials. Thus, we will consider proximal $\varepsilon$-sub- and superdifferentials assuming additionally that the directions are now determined by plans. This approach follows \cite[Deﬁnition 10.3.1]{ambrosio} where the Frechet subdifferential was considered. Additionally, we refer to \cite{Volkov_Arxiv}, where the proximal $\varepsilon$-subdifferentials were considered for functions defined on $\prd$.
	
	\begin{definition}\label{nonsmooth:def:prox_diff} We say that a  pair $(a,\gamma)\in \mathbb{R}\times\pCBrd$ is a proximal $\varepsilon$-subgradient of $\phi$ at $(s,\mu)$ if the following condition holds: there exists $\alpha>0$ and $\sigma\geq 0$ such that, for every $(t,\nu)\in \mathbb{B}_\alpha(s,\mu)$ and $\beta\in\pTCBrd$ satisfying
	\begin{itemize}
		\item $\operatorname{p}^{1,3}\sharp\beta=\gamma$,
		\item $\operatorname{p}^2\sharp\beta=\nu$,
	\end{itemize} one has that 
	\begin{equation}\label{nonsmooth:ineq_intro:proximal}
		\begin{split}
	\phi(t,\nu)-\phi(s,\mu)\geq a(t-s)+ \int_{\rd\times\rd\times\rds}q(x'-&x)\beta(d(x,x',q))\\-\sigma\cdot\big((t-s)^2+\|\operatorname{p}^{1,2}\sharp\beta\|^2\big)&-\varepsilon\cdot \big(|t-s|^2+W_2^2(\nu,\mu)\big)^{1/2}.\end{split}\end{equation}
		
	The set of all proximal $\varepsilon$-subgradient of the function $\phi$ at $(s,\mu)$ is called a $\varepsilon$-subdifferential of the function $\phi$ at $(s,\mu)$ and is denoted by	$\partial_{P,\varepsilon}^-\phi(s,\mu)$. 	\end{definition}
	
	\begin{definition}\label{nonsmooth:def:prox_superdiff} The proximal $\varepsilon$-superdifferential of the function $\phi$ at $(s,\mu)$ denoted by $\partial_{P,\varepsilon}^+\phi(s,\mu)$ 
	 consists of all pairs $(a,\gamma)\in \mathbb{R}\times\pCBrd$ satisfying the following property: there exists $\alpha\in (0,r]$ and $\sigma\geq 0$ such that, for every $(t,\nu)\in \mathbb{B}_\alpha(s,\nu)$ and $\beta\in\pTCBrd$ with
	\begin{itemize}
		\item $\operatorname{p}^{1,3}\sharp\beta=\gamma$,
		\item $\operatorname{p}^2\sharp\beta=\nu$,
	\end{itemize} one has that 
	\[\begin{split}
	\phi(t,\nu)-\phi(s,\mu)\leq a(t-s)+ \int_{\rd\times\rd\times\rds}q(x'-x)&\beta(d(x,x',q))\\+\sigma\cdot\big((t-s)^2+\|\operatorname{p}^{1,2}\sharp\beta\|^2\big)&+\varepsilon\cdot \big(|t-s|^2+W_2^2(\nu,\mu)\big)^{1/2}.\end{split}\]
Elements of $\partial_{P,\varepsilon}^+\phi(s,\mu)$ are called proximal $\varepsilon$-supergradients of the function $\phi$ at $(s,\mu)$.\end{definition} Notice that $(a,\gamma)\in \partial_{P,\varepsilon}^+\phi(s,\mu)$ if and only if 
\[(-a,(\operatorname{p}^1,-\operatorname{p}^2)\sharp\gamma)\in \partial_{P,\varepsilon}^-(-\phi)(s,\mu).\]

To provide the link between the directional and proximal $\varepsilon$-sub-/superdifferential, we need the following operation. Given $\gamma\in\pCBrd$, the function $\mathscr{b}[\gamma]$ defined by the rule
\[\mathscr{b}[\gamma](x)\triangleq \int_{\rds}q\gamma(dq|x)\] is called a barycenter of $\gamma$. Here $(\gamma(\cdot|x))_{x\in\rd}$ is the disintegration of the probability $\gamma$ w.r.t.\ $ \operatorname{p}^1\sharp\gamma$. Notice that $\mathscr{b}[\gamma]\in\LTwoS{\operatorname{p}^1\sharp\gamma}$.

\begin{proposition}\label{nonsmooth:prop:diff_prox_directional} Let $\gamma\in \partial_{P,\varepsilon}^-\phi(s,\mu)$. Then, $(a,\mathscr{b}[\gamma])\in\partial_{D,\varepsilon}^-\phi(s,\mu)$. Similarly, if $(a,\mathscr{b}[\gamma])\in\partial_{P,\varepsilon}^+\phi(s,\mu)$, then $(a,\mathscr{b}[\gamma])\in\partial_{D,\varepsilon}^+\phi(s,\mu)$.
\end{proposition}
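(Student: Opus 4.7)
The plan is to feed the proximal inequality of Definition~\ref{nonsmooth:def:prox_diff} a competitor $\beta$ tailored to the directional perturbation $(\operatorname{Id}+hv')\sharp\mu$. Fix $(a,\gamma)\in\partial_{P,\varepsilon}^-\phi(s,\mu)$ with constants $\alpha,\sigma\ge 0$ as in that definition, a test direction $(\theta,v)\in\mathbb{R}\times\LTwo{\mu}$, and arbitrary sequences $h\downarrow 0$, $(\theta',v')\to(\theta,v)$ in $\mathbb{R}\times\LTwo{\mu}$. Using $\operatorname{p}^1\sharp\gamma=\mu$, I would set
\[
\beta:=\bigl(\operatorname{p}^1,\,\operatorname{p}^1+h\,v'\circ\operatorname{p}^1,\,\operatorname{p}^2\bigr)\sharp\gamma\in\pTCBrd.
\]
A routine marginal check gives $\operatorname{p}^{1,3}\sharp\beta=\gamma$ and $\operatorname{p}^2\sharp\beta=(\operatorname{Id}+hv')\sharp\mu=:\nu$, so $\beta$ is admissible in Definition~\ref{nonsmooth:def:prox_diff}. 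Since $W_2(\nu,\mu)\le h\,\|v'\|_{\LTwo{\mu}}$ and $|(s+h\theta')-s|=h|\theta'|$, the point $(s+h\theta',\nu)$ lies in $\mathbb{B}_\alpha(s,\mu)$ for all $h$ small enough.

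Next I would compute the ingredients of \eqref{nonsmooth:ineq_intro:proximal}. Invoking the disintegration $\gamma(d(x,q))=\gamma(dq|x)\mu(dx)$ and the definition of the barycenter yields
\[
\int_{\rd\times\rd\times\rds}\!q(x'-x)\,\beta(d(x,x',q))=h\int_{\rd}\mathscr{b}[\gamma](x)\,v'(x)\,\mu(dx)=h\,\langle\mathscr{b}[\gamma],v'\rangle_\mu,
\]
while $\|\operatorname{p}^{1,2}\sharp\beta\|^2=h^2\|v'\|_{\LTwo{\mu}}^2$ and $W_2^2(\nu,\mu)\le\|\operatorname{p}^{1,2}\sharp\beta\|^2=h^2\|v'\|_{\LTwo{\mu}}^2$ because $\operatorname{p}^{1,2}\sharp\beta\in\Pi(\mu,\nu)$. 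Substituting into \eqref{nonsmooth:ineq_intro:proximal}, dividing by $h>0$, and taking $\liminf$ as $h\downarrow 0$ with $(\theta',v')\to(\theta,v)$, I obtain
\[
\liminf\tfrac{1}{h}\bigl[\phi(s+h\theta',\nu)-\phi(s,\mu)-h(a\theta'+\langle\mathscr{b}[\gamma],v'\rangle_\mu)\bigr]\ge -\varepsilon\,\|(\theta,v)\|_{\mathbb{R}\times\LTwo{\mu}},
\]
because the $\sigma$-term is $O(h)$ and $\sqrt{(\theta')^2+\|v'\|_{\LTwo{\mu}}^2}\to\|(\theta,v)\|_{\mathbb{R}\times\LTwo{\mu}}$. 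This is exactly the defining inequality of $(a,\mathscr{b}[\gamma])\in\partial_{D,\varepsilon}^-\phi(s,\mu)$.

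For the supergradient case I would reduce to the subgradient one via the duality observed immediately after Definition~\ref{nonsmooth:def:prox_superdiff}: if $(a,\gamma)\in\partial_{P,\varepsilon}^+\phi(s,\mu)$, then $(-a,(\operatorname{p}^1,-\operatorname{p}^2)\sharp\gamma)\in\partial_{P,\varepsilon}^-(-\phi)(s,\mu)$; a short disintegration computation gives $\mathscr{b}[(\operatorname{p}^1,-\operatorname{p}^2)\sharp\gamma]=-\mathscr{b}[\gamma]$, and applying the already established subgradient part to $-\phi$ delivers $(a,\mathscr{b}[\gamma])\in\partial_{D,\varepsilon}^+\phi(s,\mu)$. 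I do not anticipate a real obstacle here: the only genuine design choice is producing a plan $\beta$ whose $(1,3)$-marginal reproduces $\gamma$ while its second marginal realises $(\operatorname{Id}+hv')\sharp\mu$; the rest is bookkeeping with disintegration and the elementary estimate $W_2^2(\mu,\nu)\le\|\operatorname{p}^{1,2}\sharp\beta\|^2$.
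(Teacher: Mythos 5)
Your argument is correct and coincides with the paper's own proof: you use the same push-forward plan $\beta=(\operatorname{p}^1,\operatorname{p}^1+hv'\circ\operatorname{p}^1,\operatorname{p}^2)\sharp\gamma$, the same marginal checks, the same bounds $\|\operatorname{p}^{1,2}\sharp\beta\|^2=h^2\|v'\|_{\LTwo{\mu}}^2$ and $W_2^2(\nu,\mu)\le h^2\|v'\|_{\LTwo{\mu}}^2$, and the same passage to the $\liminf$. Spelling out the duality reduction and the identity $\mathscr{b}[(\operatorname{p}^1,-\operatorname{p}^2)\sharp\gamma]=-\mathscr{b}[\gamma]$ for the supergradient half is the only addition; the paper simply declares it symmetric.
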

\begin{proof}
	We will prove only the first statement. Let 
	\begin{itemize}
	\item $\alpha$ be the constant from Definition~\ref{nonsmooth:def:prox_diff} corresponding to $\gamma$; 
	\item $(\theta,v), (\theta',v')\in\mathbb{R}\times\LTwo{\mu}$;
	\item $h$ be a sufficiently small number such that 
	$h\|(\theta',v')\|_{\mathbb{R}\times\LTwo{\mu}}\leq \alpha$.
	\end{itemize}
	We put \[\beta\triangleq (\operatorname{p}^1,\operatorname{p}^1+hv'\circ\operatorname{p}^1, \operatorname{p}^2)\sharp\gamma.\] Using the fact that $\gamma\in\partial_{P,\varepsilon}^-\phi(s,\mu)$ and letting in~\eqref{nonsmooth:ineq_intro:proximal} $\nu\triangleq (\operatorname{Id}+hv')\sharp\mu= \operatorname{p}^2\sharp\beta$, we conclude that
		\[
	\begin{split}
		\phi(s+h\theta,(\operatorname{Id}&+hv')\sharp\mu)-\phi(s,\mu)\\\geq ha\theta'&+\int_{\rd\times\rd\times\rds}q(x'-x)\beta(d(x,x',q))\\&-h^2\sigma^2(\theta')^2-\sigma\int_{\rd\times\rd}|x'-x|^2(\operatorname{p}^{1,2}\sharp\beta)(d(x,x'))\\&-\varepsilon\big[h^2(\theta')^2+ W_2^2((\operatorname{Id}+hv')\sharp\mu,\mu)\big]^{1/2}
		\\	\geq ha\theta'&+h\int_{\rd\times\rds}qv(x)\gamma(d(x,q))\\&-h^2\sigma^2(\theta')^2-h^2\sigma\|v'\|_{\LTwo{\mu}}-h\varepsilon \big[(\theta')^2+\|v'\|_{\LTwo{\mu}}^2 \big]^{1/2}.
	\end{split}
	\] 
Since $\int_{\rd\times\rds}qv(x)\gamma(d(x,q))=\int_{\rd}\mathscr{b}[\gamma](x) v(x)\mu(dx)$, dividing both sides of the last inequality by $h$ and passing to the limit when $h\downarrow 0$, $(\theta',v')\rightarrow (\theta,v)$, we obtain
\[(a,\mathscr{b}[\gamma])\in\partial_{D,\varepsilon}^-\phi(s,\mu).\]
\end{proof}

\subsection{Moreau-Yosida regularization}\label{subsect:nonsmooth:MY}
Let $D$ be a bounded subset of $\prd$. We denote $D^{(1)}\triangleq \{\nu\in\prd:\operatorname{dist}(\nu,D)\leq 1\}$. Additionally, let $\phi\in \operatorname{LSCB}([0,T]\times D^{(1)})$.

For every $s\in [0,T]$, $\mu\in D$, $\varkappa>0$, we define the Moreau-Yosida regularization of the function $\phi$ on $D$ by the rule:
\begin{equation}\label{nonsmooth:intro:Moreau_Yosida}
	\begin{split}
		\phi_\varkappa(s,\mu)\triangleq \inf\Bigg\{\phi(t,\nu)+\frac{1}{2\varkappa^2}|t-s|^2+\frac{1}{2\varkappa^2}W_2^2(\mu,\nu):t\in (0,T),\, \nu\in D^{(1)}\Bigg\}.\end{split}
\end{equation}
Clearly,
\begin{equation}\label{nonsmooth:equality:plan_MY}
	\begin{split}
		\phi_\varkappa(s,\mu)= \inf\Bigg\{\phi(t,\nu)+\frac{1}{2\varkappa^2}|t-s|^2+\frac{1}{2\varkappa^2}&\int_{\rd}|x-y|^2\pi(d(x,y)):\\ t&\in (0,T),\, \nu\in D^{(1)}, \, \pi\in\Pi(\nu,\mu)\Bigg\}.\end{split}
\end{equation}

Notice that since a closed ball in $\prd$ is not compact, we can not guarantee the existence of the minimizer in the right-hand side of~\eqref{nonsmooth:intro:Moreau_Yosida}. However, due to the Ekeland variational principle (see~\cite[Theorem 2.1.1]{Borwein2005}, \cite{Ekeland_1974}), given $\varepsilon>0$, there exists a pair $\pairEkapeps=\pairkapeps{s}{\mu}\in [0,T]\times\prd$ satisfying
\begin{equation}\label{nonsmooth:intro_ineq:s_mu_t_nu_alpha_eps}
\begin{split}
	\phi\pairEkapeps+\frac{1}{2\varkappa^2}&|\letterEkapeps{t}-s|^2+\frac{1}{2\varkappa^2}W_2^2(\mu,\letterEkapeps{\nu})\\+&\varepsilon\big(|s-\letterEkapeps{t}|^2+W_2^2(\mu,\letterEkapeps{\nu})\big)^{1/2}\leq 
	\phi(s,\mu)
\end{split}
\end{equation}
 and, for every $(t,\nu)\in[0,T]\times \prd$,
\begin{equation}\label{nonsmooth:intro_ineq:t_nu_alpha_eps}
\begin{split}
	\phi(t,\nu)+\frac{1}{2\varkappa^2}|t-s|^2+\frac{1}{2\varkappa^2}&W_2^2(\mu,\nu)+\varepsilon\big((t-\letterEkapeps{t})^2+W_2^2(\nu,\letterEkapeps{\nu})\big)^{1/2}\\\geq &\phi\pairEkapeps+\frac{1}{2\varkappa^2}|\letterEkapeps{t}-s|^2+\frac{1}{2\varkappa^2}W_2^2(\mu,\letterEkapeps{\nu}).
\end{split}
\end{equation}

\begin{lemma}\label{nonsmooth:lm:bound_1} Let $\phi$ be bounded on $D^{(1)}$, and let \begin{equation}\label{nonsmooth:intro:c_0}c_0\triangleq \sup\big\{|\phi(t,\nu)|:\, t\in [0,T],\,\nu\in D^{(1)}\big\}.\end{equation} Then,
	\[\big[(s-\letterEkapeps{t})^2+W_2^2(\mu,\letterEkapeps{\nu})\big]^{1/2}\leq \varkappa\sqrt{2c_0}. \]
\end{lemma}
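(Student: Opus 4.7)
The proof is a direct unpacking of the Ekeland inequality \eqref{nonsmooth:intro_ineq:s_mu_t_nu_alpha_eps}, and the plan is essentially a one-line rearrangement once the correct domain information is in place.

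First I would verify that the pseudo-minimizer $\pairEkapeps$ produced by the Ekeland variational principle lies in the domain $[0,T]\times D^{(1)}$ over which $\phi$ is bounded. This is the standard setup: the Moreau--Yosida regularization \eqref{nonsmooth:intro:Moreau_Yosida} is defined as an infimum over $(t,\nu)\in(0,T)\times D^{(1)}$, and $[0,T]\times D^{(1)}$ is a closed subset of the Polish space $\mathbb{R}\times\prd$ (hence complete), on which the functional $(t,\nu)\mapsto\phi(t,\nu)+\tfrac{1}{2\varkappa^2}|t-s|^2+\tfrac{1}{2\varkappa^2}W_2^2(\mu,\nu)$ is lower semicontinuous and bounded below. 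Therefore $\letterEkapeps{\nu}\in D^{(1)}$, and the uniform bound $|\phi|\leq c_0$ applies to $\phi\pairEkapeps$. Since $\mu\in D\subset D^{(1)}$ by hypothesis, the same bound applies to $\phi(s,\mu)$.

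The second step is to drop the nonnegative $\varepsilon$-penalty term in \eqref{nonsmooth:intro_ineq:s_mu_t_nu_alpha_eps} and rearrange, yielding
\begin{equation*}
\frac{1}{2\varkappa^2}\bigl[(s-\letterEkapeps{t})^2+W_2^2(\mu,\letterEkapeps{\nu})\bigr]
\;\leq\;\phi(s,\mu)-\phi\pairEkapeps.
\end{equation*}
Using $|\phi(s,\mu)|\leq c_0$ and $|\phi\pairEkapeps|\leq c_0$, one estimates the right-hand side from above, multiplies by $2\varkappa^2$, and takes square roots to arrive at the claimed bound $\varkappa\sqrt{2c_0}$.

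There is no real obstacle: the entire argument is a one-line consequence of \eqref{nonsmooth:intro_ineq:s_mu_t_nu_alpha_eps} and the assumed uniform bound on $\phi$ over $D^{(1)}$. The only thing worth writing out carefully is the first step, namely the verification that the Ekeland pseudo-minimizer indeed belongs to $D^{(1)}$ (so that the bound $c_0$ may be invoked), which is why the penalty subdomain in \eqref{nonsmooth:intro:Moreau_Yosida} was widened from $D$ to $D^{(1)}$ in the first place.
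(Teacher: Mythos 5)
Your argument is correct and takes essentially the same route as the paper: plug into the Ekeland inequality \eqref{nonsmooth:intro_ineq:s_mu_t_nu_alpha_eps}, rearrange, and use the uniform bound on $\phi$ over $[0,T]\times D^{(1)}$. The only stylistic difference is that you discard the nonnegative penalty $\varepsilon\bigl(|s-\letterEkapeps{t}|^2+W_2^2(\mu,\letterEkapeps{\nu})\bigr)^{1/2}$ outright, whereas the paper retains it, solves the resulting quadratic in $A\triangleq\bigl[(s-\letterEkapeps{t})^2+W_2^2(\mu,\letterEkapeps{\nu})\bigr]^{1/2}$, and then simplifies via $\sqrt{a^2+b^2}\le a+b$; both routes land on the same intermediate estimate $A\le\varkappa\sqrt{2}\,|\phi(s,\mu)-\phi\pairEkapeps|^{1/2}$, so nothing of substance is lost. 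One small caution that applies to your last step and to the paper's alike: from $|\phi|\le c_0$ on $[0,T]\times D^{(1)}$ one only gets $\phi(s,\mu)-\phi\pairEkapeps\le 2c_0$, which yields $A\le 2\varkappa\sqrt{c_0}$ rather than $\varkappa\sqrt{2c_0}$; the stated constant requires $\phi(s,\mu)-\phi\pairEkapeps\le c_0$ (e.g.\ if $\phi\ge 0$). This does not affect the downstream use, since the bound is only fed into the definition of $\varrho_1$.
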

\begin{proof} 
	Denoting $A\triangleq \big[(s-\letterEkapeps{t})^2+W_2^2(\mu,\letterEkapeps{\nu})\big]^{1/2}$, from~\eqref{nonsmooth:intro_ineq:s_mu_t_nu_alpha_eps}, we have that
	\[\frac{1}{2\varkappa^2}A^2+\varepsilon A\leq \phi(s,\mu)-\phi\pairEkapeps.\] Hence,
	\begin{equation*}\label{nonsmooth:ineq:quadratic_phi}A^2+2\varkappa^2\varepsilon A\leq 2\varkappa^2[\phi(s,\mu)-\phi\pairEkapeps].\end{equation*} Thus,
	\[A\leq -\varepsilon\varkappa^2+\varkappa\sqrt{\varepsilon^2\varkappa^2+2|\phi(s,\mu)-\phi\pairEkapeps|}.\] Hence,
	\begin{equation}\label{nonsmooth:ineq:pair_estimate}
		\big[(s-\letterEkapeps{t})^2+W_2^2(\mu,\letterEkapeps{\nu})\big]^{1/2}\leq \varkappa\sqrt{2}|\phi(s,\mu)-\phi\pairEkapeps|^{1/2}.
	\end{equation} The boundness of the function $\phi$ gives the statement of the lemma.
\end{proof}

From now, we assume that $\phi\in \operatorname{UC}([0,T]\times D^{(1)})$. Certainly, the function $\phi$ is still bounded by $c_0$ defined by~\eqref{nonsmooth:intro:c_0}. Moreover, one can introduce the modulus of continuity on $[0,T]\times D^{(1)}$ by the rule:
\begin{equation}\label{nonsmooth:intro:modulus}
\begin{split}
	\omega(\kappa)\triangleq \sup\Big\{|\phi(t',&\nu')-\phi(t'',\nu'')|:\\ &t',t''\in [0,T],\, \nu',\nu''\in D^{(1)},\, |t'-t''|^2+W_2^2(\nu',\nu'')\leq \kappa^2\Big\}.\end{split}
\end{equation} We define
\begin{equation}\label{nonsmooth:intro:N}
	\varrho_{1}(\kappa)\triangleq\kappa\sqrt{2}\big(\omega(\kappa\sqrt{2c_0})\big)^{1/2}\wedge \kappa\sqrt{2c_0}.
\end{equation} Notice that $\kappa^{-1}\varrho_{1}(\kappa)\rightarrow 0$ as $\kappa\rightarrow 0$.

\begin{corollary}\label{nonsmooth:corollary:bound_2} The following estimate holds: 
\[\big[(s-\letterEkapeps{t})^2+W_2^2(\mu,\letterEkapeps{\nu})\big]^{1/2}\leq \varrho_{1}(\varkappa). \] 
\end{corollary}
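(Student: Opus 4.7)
The plan is to feed the bound from Lemma~\ref{nonsmooth:lm:bound_1} back into the tighter estimate~\eqref{nonsmooth:ineq:pair_estimate} that was derived inside its proof, closing the loop with the modulus of continuity $\omega$. Write $A\triangleq \big[(s-\letterEkapeps{t})^2+W_2^2(\mu,\letterEkapeps{\nu})\big]^{1/2}$. Lemma~\ref{nonsmooth:lm:bound_1} already yields $A\leq \varkappa\sqrt{2c_0}$, which takes care of one of the two entries in the minimum defining $\varrho_1(\varkappa)$.

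For the other entry, I would revisit inequality~\eqref{nonsmooth:ineq:pair_estimate}, which says
\[A\leq \varkappa\sqrt{2}\,|\phi(s,\mu)-\phi\pairEkapeps|^{1/2}.\]
Since $\phi\in\operatorname{UC}([0,T]\times D^{(1)})$ and $(s,\mu),\pairEkapeps\in [0,T]\times D^{(1)}$, the definition~\eqref{nonsmooth:intro:modulus} of the modulus $\omega$ gives $|\phi(s,\mu)-\phi\pairEkapeps|\leq \omega(A)$. The modulus $\omega$ is nondecreasing by construction (it is a supremum over a set that grows with $\kappa$), so the first bound $A\leq \varkappa\sqrt{2c_0}$ implies $\omega(A)\leq \omega(\varkappa\sqrt{2c_0})$. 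Substituting back yields
\[A\leq \varkappa\sqrt{2}\,\bigl(\omega(\varkappa\sqrt{2c_0})\bigr)^{1/2}.\]

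Combining the two estimates gives $A\leq \varkappa\sqrt{2}\bigl(\omega(\varkappa\sqrt{2c_0})\bigr)^{1/2}\wedge \varkappa\sqrt{2c_0}=\varrho_{1}(\varkappa)$, which is the claim. There is no real obstacle here; the only point worth being explicit about is the monotonicity of $\omega$, which is immediate from~\eqref{nonsmooth:intro:modulus}. The bootstrap pattern (use the crude estimate of Lemma~\ref{nonsmooth:lm:bound_1} to control the argument of $\omega$, then plug the resulting refined bound into~\eqref{nonsmooth:ineq:pair_estimate}) is what makes $\kappa^{-1}\varrho_{1}(\kappa)\to 0$ a useful strengthening for later applications to the proximal subdifferential.
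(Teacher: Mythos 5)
Your argument is correct and is exactly the paper's proof, spelled out in more detail: the paper simply notes that $|\phi(s,\mu)-\phi\pairEkapeps|\leq\omega(\varkappa\sqrt{2c_0})$ and plugs this into~\eqref{nonsmooth:ineq:pair_estimate}, which is precisely your bootstrap via the monotonicity of $\omega$ and the bound from Lemma~\ref{nonsmooth:lm:bound_1}. Nothing to add.
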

\begin{proof}
	It suffices to notice that $|\phi(s,\mu)-\phi\pairEkapeps|\leq \omega(\varkappa\sqrt{2c_0})$ and use inequality~\eqref{nonsmooth:ineq:pair_estimate}.
\end{proof}

If $\letterEkapeps{\pi}$ is an optimal plan between $\mu$ and $\letterEkapeps{\nu}$, we put
\begin{equation}\label{nonsmooth:intro:a_diff}
\letterEkapeps{a}=\letterkapeps{a}{s}{\mu}\triangleq \varkappa^{-2}(s-\letterEkapeps{t}),
\end{equation}
\begin{equation}\label{nonsmooth:intro:alpha}\letterEkapeps{\gamma}=\letterkapeps{\gamma}{s}{\mu}\triangleq (\operatorname{p}^2,\varkappa^{-2}(\operatorname{p}^1-\operatorname{p}^2)^\top)\sharp \letterEkapeps{\pi}.\end{equation} To simplify notation, we will omit arguments within this section, where $s$ and $\mu$ are assumed to be fixed.

Following~\cite{Jimenez2023}, we introduce the set 
\begin{equation}\label{nonsmooth:intro:dis_minus}\begin{split}
	\operatorname{dis}^-(\mu)\triangleq \Big\{c(F-\operatorname{Id})^\top:\ \  F:\rd\rightarrow\rd\text{ is an optimal transportation}&{}\\\text{map between }\mu\text{ and }F\sharp\mu,\ \ c\in [0,+&\infty)\Big\}.\end{split}\end{equation} Recall that $F$ is an optimal transportation map between $\mu$ and $F\sharp\mu$, the function $(F-\operatorname{Id})$ is called an optimal displacement \cite{ambrosio}. So, $\operatorname{dis}^-(\mu)$ is a cone generated by transposed optimal displacements. Moreover, from \cite[Lemma 4]{Jimenez2020}, it follows that $\mathscr{b}[\letterEkapeps{\gamma}]\in\operatorname{dis}^-(\mu)$.

\begin{lemma}\label{nonsmooth:lm:alpha_prox}
Let $s\in (0,T)$.	If $\varkappa$ satisfies $\varrho_{1}(\varkappa)<1\wedge s\wedge (T-s)$, then 
	\[(\letterEkapeps{a},\letterEkapeps{\gamma})\in\partial_{P,\varepsilon}^-\pairEkapeps.\]
\end{lemma}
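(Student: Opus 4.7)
The plan is to start from the Ekeland-variational inequality~\eqref{nonsmooth:intro_ineq:t_nu_alpha_eps} (which holds for all $(t,\nu)\in[0,T]\times\prd$), rearrange it to produce a lower bound for $\phi(t,\nu)-\phi\pairEkapeps$, and then match the time and measure terms to the form demanded by Definition~\ref{nonsmooth:def:prox_diff}. First, I would pick $\alpha>0$ with $\alpha< \letterEkapeps{t}\wedge(T-\letterEkapeps{t})\wedge(1-\operatorname{dist}(\letterEkapeps{\nu},D))$, which is possible because the hypothesis $\varrho_{1}(\varkappa)<1\wedge s\wedge(T-s)$ together with Corollary~\ref{nonsmooth:corollary:bound_2} forces $\letterEkapeps{t}\in(0,T)$ and $\letterEkapeps{\nu}\in D^{(1)}$. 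With this choice, every $(t,\nu)\in\mathbb{B}_{\alpha}\pairEkapeps$ lies in the domain where $\phi$ is defined.

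The time term is elementary: the algebraic identity $|\letterEkapeps{t}-s|^2-|t-s|^2=2(s-\letterEkapeps{t})(t-\letterEkapeps{t})-(t-\letterEkapeps{t})^2$ gives, after dividing by $2\varkappa^2$ and recalling~\eqref{nonsmooth:intro:a_diff},
\[\tfrac{1}{2\varkappa^2}\bigl(|\letterEkapeps{t}-s|^2-|t-s|^2\bigr)=\letterEkapeps{a}(t-\letterEkapeps{t})-\tfrac{1}{2\varkappa^2}(t-\letterEkapeps{t})^2.\]

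The central step is the Wasserstein estimate. Given $\beta$ satisfying $\operatorname{p}^{1,3}\sharp\beta=\letterEkapeps{\gamma}$ and $\operatorname{p}^2\sharp\beta=\nu$, I would exploit the very definition~\eqref{nonsmooth:intro:alpha} of $\letterEkapeps{\gamma}$ — which encodes $y=x+\varkappa^2 q^\top$ along the optimal plan $\letterEkapeps{\pi}$ — to glue $\letterEkapeps{\pi}$ and $\beta$ along their common $(x,q)$-marginal $\letterEkapeps{\gamma}$. Concretely, define $\lambda$ on $\rd\times\rd\times\rd\times\rds$ by
\[\lambda(d(y,x,x',q))\triangleq \delta_{x+\varkappa^2 q^\top}(dy)\,\beta(d(x,x',q)).\]
Since $\letterEkapeps{\gamma}$ is the image of $\letterEkapeps{\pi}$ under $(y,x)\mapsto(x,\varkappa^{-2}(y-x)^\top)$, the $(y,x)$-marginal of $\lambda$ is exactly $\letterEkapeps{\pi}$; in particular $\operatorname{p}^1\sharp\lambda=\mu$. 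The $x'$-marginal is $\nu$, so $\operatorname{p}^{1,4}\sharp\lambda\in\Pi(\mu,\nu)$. Consequently
\[W_2^2(\mu,\nu)\leq \int|x+\varkappa^2 q^\top-x'|^2\beta(d(x,x',q))=\int\bigl(|x-x'|^2-2\varkappa^2 q(x'-x)+\varkappa^4|q|^2\bigr)\beta.\]
On the other hand, optimality of $\letterEkapeps{\pi}$ together with the marginal constraint $\operatorname{p}^{1,3}\sharp\beta=\letterEkapeps{\gamma}$ yields $W_2^2(\mu,\letterEkapeps{\nu})=\int|y-x|^2\letterEkapeps{\pi}=\int\varkappa^4|q|^2\letterEkapeps{\gamma}=\int\varkappa^4|q|^2\beta$. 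Subtracting and dividing by $2\varkappa^2$ gives
\[\tfrac{1}{2\varkappa^2}\bigl(W_2^2(\mu,\letterEkapeps{\nu})-W_2^2(\mu,\nu)\bigr)\geq \int q(x'-x)\beta-\tfrac{1}{2\varkappa^2}\|\operatorname{p}^{1,2}\sharp\beta\|^2.\]

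Finally, I would plug the two identities above back into the rearrangement of~\eqref{nonsmooth:intro_ineq:t_nu_alpha_eps}, noting that $W_2^2(\nu,\letterEkapeps{\nu})\leq\|\operatorname{p}^{1,2}\sharp\beta\|^2$ so that the Ekeland error term $\varepsilon((t-\letterEkapeps{t})^2+W_2^2(\nu,\letterEkapeps{\nu}))^{1/2}$ provides exactly the $\varepsilon$-slack required by Definition~\ref{nonsmooth:def:prox_diff}. With $\sigma\triangleq 1/(2\varkappa^2)$, the proximal $\varepsilon$-subgradient inequality~\eqref{nonsmooth:ineq_intro:proximal} at the base point $\pairEkapeps$ is verified. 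I expect the main obstacle to be setting up the gluing $\lambda$ correctly so that both $\operatorname{p}^1\sharp\lambda=\mu$ and the identity $y=x+\varkappa^2 q^\top$ hold $\lambda$-a.s.; this is what makes the cross-term collapse into $\int q(x'-x)\beta$ (for arbitrary $\beta$, not just product couplings) instead of the barycenter expression one would get from the naive gluing along $x$.
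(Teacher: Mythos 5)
Your argument is correct and essentially reproduces the paper's proof: the paper's gluing measure $\varpi$ in~\eqref{nonsmooth:intro:prox_proof_varpi} is precisely $\operatorname{p}^{1,2,3}\sharp\lambda$ (with the first coordinate read as $\operatorname{p}^1+(\varkappa^2\operatorname{p}^3)^\top$, which is what the relation $q=\varkappa^{-2}(y-x)^\top$ encoded in $\letterEkapeps{\gamma}$ forces and what you correctly use, whereas the displayed formula in the paper carries a sign typo), and both proofs then feed the resulting suboptimal plan between $\mu$ and $\nu$ into the Ekeland inequality~\eqref{nonsmooth:intro_ineq:t_nu_alpha_eps} and expand the quadratic to read off $\letterEkapeps{a}$, $\letterEkapeps{\gamma}$ and $\sigma=1/(2\varkappa^2)$. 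Two cosmetic slips in your write-up: with coordinates ordered as $(y,x,x',q)$ the $(y,x')$-marginal of $\lambda$ is $\operatorname{p}^{1,3}\sharp\lambda$, not $\operatorname{p}^{1,4}\sharp\lambda$, and the bound $W_2^2(\nu,\letterEkapeps{\nu})\le\|\operatorname{p}^{1,2}\sharp\beta\|^2$ you invoke at the end is unneeded, since the $\varepsilon$-penalty in Definition~\ref{nonsmooth:def:prox_diff}, evaluated at the base point $\pairEkapeps$, is already expressed via $W_2(\nu,\letterEkapeps{\nu})$.
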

\begin{proof} First, notice that, due to the assumption of $s$, $\varkappa$ and $\varepsilon$, $\letterEkapeps{t}\in (0,T)$. At the same time, Corollary~\ref{nonsmooth:corollary:bound_2} gives that $\letterEkapeps{\nu}\in\operatorname{int}(D^{(1)})$.
	
	Let 
	\begin{itemize}
	\item $h<\letterEkapeps{t}\wedge (T-\letterEkapeps{t})\wedge\operatorname{dist}(\letterEkapeps{\nu},\partial D^{(1)});$
	\item $(t,\nu)\in \mathbb{B}_h\pairEkapeps$; 
	\item $\beta\in\pTCBrd$ be such that $\operatorname{p}^{1,3}\sharp\beta=\letterEkapeps{\gamma}$, $\operatorname{p}^2\sharp\beta=\nu$.\end{itemize} We put
	\begin{equation}\label{nonsmooth:intro:prox_proof_varpi}\varpi\triangleq ((\varkappa^2\operatorname{p}^3)^\top-\operatorname{p}^1,\operatorname{p}^1,\operatorname{p}^2)\sharp\beta.\end{equation}
	Notice that $\varpi\in\pW{(\rd)^3}$ is such that $\operatorname{p}^{1,2}\sharp\varpi=\letterEkapeps{\pi}$, $\operatorname{p}^{2,3}\sharp\varpi=\pi$. Since $\operatorname{p}^{1,3}\sharp\varpi$ is a plan between $\mu$ and $\nu$, from~\eqref{nonsmooth:intro_ineq:t_nu_alpha_eps}, we have that 
	\begin{equation*}
		\begin{split}
			\phi(t,\nu)-\phi&\pairEkapeps\\\geq &\frac{1}{2\varkappa^2}\big(|\letterEkapeps{t}-s|^2-|t-s|^2\big)\\&{}\hspace{20pt} +\frac{1}{2\varkappa^2}\int_{(\rd)^3}\big(|x-z|^2-|x-y|^2\big)\varpi(d(x,z,y))\\&{}\hspace{20pt}-\varepsilon\big((t-\letterEkapeps{t})^2+W_2^2(\nu,\letterEkapeps{\nu})\big)^{1/2}.
		\end{split}
	\end{equation*} Thus,
		\begin{equation*}
		\begin{split}
			\phi(t,\nu)-\phi&\pairEkapeps\\\geq &\frac{1}{2\varkappa^2}\big(2(s-\letterEkapeps{t})(t-\letterEkapeps{t})-|t-\letterEkapeps{t}|^2\big)\\&{}\hspace{20pt}+\frac{1}{2\varkappa^2}\int_{(\rd)^3}(2(x-z)^\top(y-z)-|y-z|^2)\varpi(d(x,z,y))\\&{}\hspace{20pt}-\varepsilon\big((t-\letterEkapeps{t})^2+W_2^2(\nu,\letterEkapeps{\nu})\big)^{1/2}.
		\end{split}
	\end{equation*} Using the definition of $\varpi$ (see~\eqref{nonsmooth:intro:prox_proof_varpi}) and Definition~\ref{nonsmooth:def:prox_diff}, we arrive at the statement of the lemma.
\end{proof}

\begin{lemma}\label{nonsmooth:lm:shift} Let $s'\in [0,T]$, $\mu'\in D$, $\pi\in \Pi(\mu,\mu')$ and let $\varpi\in\pW{(\rd)^3}$ be such that
	\begin{itemize}
		\item $\operatorname{p}^{1,3}\sharp\varpi=\pi$;
		\item $\operatorname{p}^{1,2}\sharp\varpi=\letterEkapeps{\pi}$.
	\end{itemize} Then,
	\[\begin{split}	\phi_\varkappa(s',&\mu')\leq \phi_\varkappa(s,\mu)\\
	&+\frac{1}{\varkappa^2}(s-\letterEkapeps{t})(s'-s)+\frac{1}{\varkappa^2}
	\int_{(\rd)^3}\big[(x-z)^\top(x'-x)\big]\varpi(d(x,z,x'))\\
	&+ \frac{1}{2\varkappa^2}\Bigg[|s'-s|^2+\int_{\rd\times\rd}(x'-x)^2\pi(d(x,x'))\Bigg]+\varepsilon\varrho_{2}(\varkappa),
	\end{split}\] where $\kappa^{-1}\varrho_{2}(\kappa)\rightarrow 0$ as $\kappa\rightarrow 0$.
\end{lemma}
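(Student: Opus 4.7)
The plan is to test the plan-based characterization~\eqref{nonsmooth:equality:plan_MY} of $\phi_\varkappa(s',\mu')$ with the competitor $(\letterEkapeps{t},\letterEkapeps{\nu})$ together with the plan $\pi' \triangleq \operatorname{p}^{2,3}\sharp\varpi$. The hypotheses on $\varpi$ force $\operatorname{p}^2\sharp\varpi = \letterEkapeps{\nu}$ and $\operatorname{p}^3\sharp\varpi = \mu'$, so $\pi' \in \Pi(\letterEkapeps{\nu},\mu')$ is admissible, yielding
\[
\phi_\varkappa(s',\mu') \leq \phi\pairEkapeps + \tfrac{1}{2\varkappa^2}|\letterEkapeps{t}-s'|^2 + \tfrac{1}{2\varkappa^2}\int_{(\rd)^3}|z-x'|^2\,\varpi(d(x,z,x')).
\]

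Next I would expand $|\letterEkapeps{t}-s'|^2 = |\letterEkapeps{t}-s|^2 + 2(s-\letterEkapeps{t})(s'-s) + |s'-s|^2$ and, pointwise, $|z-x'|^2 = |z-x|^2 + 2(x-z)^\top(x'-x) + |x'-x|^2$. Integrating the second identity against $\varpi$ and using $\operatorname{p}^{1,2}\sharp\varpi = \letterEkapeps{\pi}$, $\operatorname{p}^{1,3}\sharp\varpi = \pi$, together with the optimality $\int|z-x|^2\letterEkapeps{\pi}(d(x,z)) = W_2^2(\mu,\letterEkapeps{\nu})$, produces exactly the right-hand side of the lemma, except that the "leading" term is $\phi\pairEkapeps + \tfrac{1}{2\varkappa^2}\bigl[|\letterEkapeps{t}-s|^2 + W_2^2(\mu,\letterEkapeps{\nu})\bigr]$ in place of $\phi_\varkappa(s,\mu)$. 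This is where the Ekeland pseudo-minimizer is needed, because that leading term is only an upper bound for $\phi_\varkappa(s,\mu)$, not an equality.

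The main task is therefore to show that this leading term differs from $\phi_\varkappa(s,\mu)$ by at most $\varepsilon\varrho_2(\varkappa)$ with $\varkappa^{-1}\varrho_2(\varkappa)\to 0$. For an arbitrary $\eta > 0$, pick a triple $(t^*,\nu^*,\pi^*)$ admissible in~\eqref{nonsmooth:equality:plan_MY} at $(s,\mu)$ whose value is $\leq \phi_\varkappa(s,\mu) + \eta$; applying the pseudo-minimizer inequality~\eqref{nonsmooth:intro_ineq:t_nu_alpha_eps} at $(t^*,\nu^*)$ gives
\[
\phi\pairEkapeps + \tfrac{1}{2\varkappa^2}[|\letterEkapeps{t}-s|^2 + W_2^2(\mu,\letterEkapeps{\nu})] \leq \phi_\varkappa(s,\mu) + \eta + \varepsilon\bigl((t^*-\letterEkapeps{t})^2 + W_2^2(\nu^*,\letterEkapeps{\nu})\bigr)^{1/2}.
\]
The triangle inequality on $[0,T]\times\prd$ bounds the last radical by $d((t^*,\nu^*),(s,\mu)) + d((s,\mu),\pairEkapeps)$, with $d^2 = |\cdot|^2 + W_2^2(\cdot,\cdot)$. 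The second summand is $\leq \varrho_1(\varkappa)$ by Corollary~\ref{nonsmooth:corollary:bound_2}, and the first admits the same kind of bound by rerunning the proof of Lemma~\ref{nonsmooth:lm:bound_1} with $(t^*,\nu^*)$ in place of the pseudo-minimizer: the inequality $\tfrac{1}{2\varkappa^2}d^2 \leq \phi(s,\mu) - \phi(t^*,\nu^*) + \eta \leq \omega(d) + \eta$ still holds, and letting $\eta\downarrow 0$ forces $d((t^*,\nu^*),(s,\mu))\to O(\varrho_1(\varkappa))$. Setting $\varrho_2(\varkappa)$ to be the resulting combination of these two $\varrho_1$-type terms completes the proof. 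The hard part here is precisely this self-improvement: a crude uniform bound $d \leq \varkappa\sqrt{2c_0}$ would only give $\varrho_2 = O(\varkappa)$, which is \emph{not} sublinear; the sublinearity is rescued by folding the modulus of continuity $\omega$ of $\phi$ back into the bound, exactly as in the passage from Lemma~\ref{nonsmooth:lm:bound_1} to~\eqref{nonsmooth:intro:N}.
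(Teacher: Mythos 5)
Your proposal is correct and follows essentially the same route as the paper's proof: expand the quadratics from the competitor $(\letterEkapeps{t},\letterEkapeps{\nu},\operatorname{p}^{2,3}\sharp\varpi)$ to reduce the claim to showing $\phi\pairEkapeps+\tfrac{1}{2\varkappa^2}\big[|s-\letterEkapeps{t}|^2+W_2^2(\mu,\letterEkapeps{\nu})\big]\le\phi_\varkappa(s,\mu)+\varepsilon\varrho_2(\varkappa)$, then apply the Ekeland inequality~\eqref{nonsmooth:intro_ineq:t_nu_alpha_eps} at a near-minimizer and upgrade the crude $O(\varkappa)$ distance bound to a sublinear one via the modulus of continuity. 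The only difference is cosmetic bookkeeping: you split the distance $d((t^*,\nu^*),\pairEkapeps)$ via the triangle inequality through $(s,\mu)$ and bound each leg separately, whereas the paper derives a single quadratic inequality in $A_n=d((t_n,\nu_n),\pairEkapeps)$ directly from the reverse triangle inequality; both yield the same $\varrho_2$, and you correctly pinpoint the self-improvement step as the crux.
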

\begin{proof}
	By construction of the function $\phi_\varkappa$, we have that
	\[
	\phi_\varkappa(s',\mu')\leq \phi\pairEkapeps+ \frac{1}{2\varkappa^2}\big[|s'-\letterEkapeps{t}|^2+W_2^2(\mu',\letterEkapeps{\nu})\big].\] At the same time, the choice of the probability $\varpi$ gives the inequality
	\[
	\begin{split}
		\phi_\varkappa(s',\mu')\leq \phi&\pairEkapeps+ \frac{1}{2\varkappa^2}\big[|s'-s|^2+|s-\letterEkapeps{t}|^2+2(s-\letterEkapeps{t})(s'-s)\big]\\+&\frac{1}{2\varkappa^2}
		\int_{(\rd)^3}\big[(x'-z)^2+(x-z)^2+(x-z)^\top(x'-x)\big]\varpi(d(x,z,x')).
	\end{split}
	\] Taking into account the conditions on $\varpi$ and the definition of $\letterEkapeps{\pi}$, we conclude that 
	\begin{equation}\label{nonsmooth:ineq:phi_s_mu_prime}\begin{split}
		\phi_\varkappa(s',&\mu')\\\leq \phi&\pairEkapeps+\frac{1}{2\varkappa^2}\big[|s-\letterEkapeps{t}|^2+W_2^2(\letterEkapeps{\nu},\mu)\big]\\
		&+\frac{1}{\varkappa^2}(s-\letterEkapeps{t})(s'-s)+\frac{1}{\varkappa^2}
		\int_{(\rd)^3}\big[(x-z)^\top(x'-x)\big]\varpi(d(x,z,x'))\\
		&+ \frac{1}{2\varkappa^2}\Bigg[|s'-s|^2+\int_{\rd\times\rd}(x'-x)^2\pi(d(x,x'))\Bigg].
		\end{split}
	\end{equation}	Furthermore, let $\{(t_n,\nu_n)\}_{n=1}^\infty\subset [0,T]\times D^{(1)}$ be such that
	\[\phi(t_n,\nu_n)+\frac{1}{2\varkappa^2}\big[|s-t_n|^2+W_2^2(\mu,\nu_n)\big]\leq\phi_\varkappa(s,\mu)+\frac{1}{n}.\] Therefore,
	\[\begin{split}
	\phi(t_n,\nu_n)+&\frac{1}{2\varkappa^2}\big[|s-t_n|^2+W_2^2(\mu,\nu_n)\big]\\ \leq &\phi\pairEkapeps+\frac{1}{2\varkappa^2}\big[|s-\letterEkapeps{t}|^2+W_2^2(\letterEkapeps{\nu},\mu)\big]+\frac{1}{n}.\end{split}\] Notice that, \[\begin{split}
	\big(|s-t_n|^2+&W_2^2(\mu,\nu_n)\big)^{1/2}\\&\geq \big(|s-\letterEkapeps{t}|^2+W_2^2(\mu,\letterEkapeps{\nu})\big)^{1/2}-\big(|\letterEkapeps{t}-t_n|^2+W_2^2(\letterEkapeps{\nu},\nu_n)\big)^{1/2}.\end{split}\] Thus,
	\[\begin{split}
	|s-t_n|^2+W_2^2(&\mu,\nu_n)\\\geq |s-\letterEkapeps{t}&|^2+W_2^2(\mu,\letterEkapeps{\nu}) +|\letterEkapeps{t}-t_n|^2+W_2^2(\letterEkapeps{\nu},\nu_n)\\ &-2(|\letterEkapeps{t}-t_n|^2+W_2^2(\letterEkapeps{\nu},\nu_n))^{1/2}(|s-\letterEkapeps{t}|^2+W_2^2(\mu,\letterEkapeps{\nu}))^{1/2}.\end{split}\]
	If we denote $A_n\triangleq (|\letterEkapeps{t}-t_n|^2+W_2^2(\letterEkapeps{\nu},\nu_n))^{1/2}$, then
	\[
	(A_n)^2-2A_n(|s-\letterEkapeps{t}|^2+W_2^2(\mu,\letterEkapeps{\nu}))^{1/2}\leq 2\varkappa^2|\phi\pairEkapeps-\phi(t_n,\nu_n)|+2\frac{\varkappa^2}{n}.
	\] Furthermore, we use Corollary~\ref{nonsmooth:corollary:bound_2}. Hence,
	\[ (A_n)^2-2\varrho_{1}(\varkappa)A_n\leq 2\varkappa^2|\phi\pairEkapeps-\phi(t_n,\nu_n)|+2\frac{\varkappa^2}{n}. \] This gives that
	\begin{equation}\label{nonsmooth:ineq:t_nu_n_t_nu_eps}A_n\leq 2\varrho_{1}(\varkappa)+2\varkappa\bigg(|\phi\pairEkapeps-\phi(t_n,\nu_n)|^{1/2}+\frac{1}{n^{1/2}}\bigg).\end{equation}
	Now, we recall that $(t_n,\nu_n)\in [0,T]\times D^{(1)}$ and $\pairEkapeps\in [0,T]\times D^{(1)}$. Thus, we estimate the right-hand side in~\eqref{nonsmooth:ineq:t_nu_n_t_nu_eps} and arrive at the inequality
	\[A_n\leq \varrho_{1,n}'(\varkappa)\]
	with \[\varrho_{1,n}'(\kappa)\triangleq 2\varrho_{1}(\kappa)+2\kappa\sqrt{2c_0}+\frac{2\kappa}{\sqrt{n}}.\]
	Using the fact that $\phi$ is uniformly continuous on $[0,T]\times D^{(1)}$, we have that 
	\begin{equation}\label{nonsmooth:ineq:dist_t_n_nu_n}(|\letterEkapeps{t}-t_n|^2+W_2^2(\letterEkapeps{\nu},\nu_n))^{1/2}=A_n\leq \varrho_{2,n}'(\varkappa),\end{equation} where
	\[\varrho_{2,n}'(\kappa)\triangleq 2\varrho_{1}(\kappa)+2\kappa\Big[\omega\big(\varrho_{1,n}'(\kappa)\big)\Big]^{1/2}+\frac{2\kappa}{n^{1/2}}.\]
	Notice that, if $n\rightarrow \infty$, then $\varrho_{2,n}(\cdot)$ converges to the function $\varrho_{2}(\cdot)$ defined by the rule
	\begin{equation*}
		\varrho_{2}(\kappa)\triangleq 2\varrho_{1}(\kappa)+2\varkappa\Big[\omega\big(2\varrho_{1}(\kappa)+2\kappa\sqrt{c_0}\big)\Big]^{1/2}.
	\end{equation*}
	 Moreover, this convergence is uniform on each interval $[0,\delta^0]$. Additionally,
	 \[\kappa^{-1}\varrho_{2}(\kappa)\rightarrow 0\text{ as }\kappa\rightarrow 0.\]

	Due to~\eqref{nonsmooth:intro_ineq:t_nu_alpha_eps}, we have that 
	\[
	\begin{split}
		\phi\pairEkapeps+\frac{1}{2\varkappa^2}\big[&|s-\letterEkapeps{t}|^2+W_2^2(\letterEkapeps{\nu},\mu)\big] \\ \leq \phi(t_n,\nu_n)+&\frac{1}{2\varkappa^2}\big[|s-t_n|^2+W_2^2(\mu,\nu_n)\big]+\varepsilon \big[|\letterEkapeps{t}-t_n|^2+W_2^2(\letterEkapeps{\nu},\nu_n)\big]^{1/2}.
	\end{split}
	\]
	Estimating the term $\big[|\letterEkapeps{t}-t_n|^2+W_2^2(\letterEkapeps{\nu},\nu_n)\big]$ by~\eqref{nonsmooth:ineq:dist_t_n_nu_n}, we obtain
	\[	\begin{split}
		\phi\pairEkapeps+\frac{1}{2\varkappa^2}\big[|s-\letterEkapeps{t}|^2+W_2^2&(\letterEkapeps{\nu},\mu)\big] \\ \leq \phi(t_n,\nu_n)+&\frac{1}{2\varkappa^2}\big[|s-t_n|^2+W_2^2(\mu,\nu_n)\big]+\varepsilon \varrho_{2,n}(\varkappa).
	\end{split}
	\]
	Furthermore, since $\phi(t_n,\nu_n)+\frac{1}{2\varkappa^2}\big[|s-t_n|^2+W_2^2(\mu,\nu_n)\rightarrow \phi_\varkappa(s,\mu)$, and $\varrho_{2,n}(\varkappa)\rightarrow \varrho_{2}(\varkappa)$ as $n\rightarrow\infty$, the following estimate holds: 
	 \begin{equation*}\label{nonsmooth:inew:phi_s_kapeps_phi_kappa}
	 \phi\pairEkapeps+\frac{1}{2\varkappa^2}\big[|s-\letterEkapeps{t}|^2+W_2^2(\letterEkapeps{\nu},\mu)\big] \leq \phi_\varkappa(s,\mu)+\varepsilon \varrho_{2}(\varkappa).
	 \end{equation*} The conclusion of lemma now directly follows from this and~\eqref{nonsmooth:ineq:phi_s_mu_prime}. 
\end{proof}

\begin{lemma}\label{nonsmooth:lm:varphi_varphi_varkappa} For each $(s,\mu)\in D$, the following holds: 
	\[|\phi(s,\mu)-\phi_\varkappa(s,\mu)|\leq \varrho_{3}(\varkappa),\] where $\varrho_{3}(\kappa)\rightarrow 0$ as $\kappa\rightarrow 0$.
\end{lemma}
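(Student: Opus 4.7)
The plan is to establish the two inequalities $\phi_\varkappa(s,\mu) \le \phi(s,\mu)$ and $\phi(s,\mu) - \phi_\varkappa(s,\mu) \le \varrho_{3}(\varkappa)$ separately. The first is essentially a free consequence of the Moreau--Yosida definition as an infimum: dropping the nonnegative $\varepsilon$-term in~\eqref{nonsmooth:intro_ineq:s_mu_t_nu_alpha_eps} yields
\[
\phi\pairEkapeps + \tfrac{1}{2\varkappa^2}\bigl[|s-\letterEkapeps{t}|^2 + W_2^2(\mu,\letterEkapeps{\nu})\bigr] \le \phi(s,\mu),
\]
and the left-hand side is, by the definition of the infimum, an upper bound for $\phi_\varkappa(s,\mu)$ provided $\letterEkapeps{t}\in(0,T)$ and $\letterEkapeps{\nu}\in D^{(1)}$; these inclusions hold once $\varkappa$ is small enough thanks to Corollary~\ref{nonsmooth:corollary:bound_2}. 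For larger $\varkappa$ one may instead plug $(t,\mu)$ with $t$ close to $s$ into the infimum and invoke uniform continuity of $\phi$.

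For the reverse bound, I would combine the two estimates already built. By Corollary~\ref{nonsmooth:corollary:bound_2}, $\bigl(|s-\letterEkapeps{t}|^2 + W_2^2(\mu,\letterEkapeps{\nu})\bigr)^{1/2} \le \varrho_{1}(\varkappa)$, so the uniform continuity of $\phi$ on $[0,T]\times D^{(1)}$ with modulus $\omega$ from~\eqref{nonsmooth:intro:modulus} gives
\[
\phi(s,\mu) - \phi\pairEkapeps \le \omega\bigl(\varrho_{1}(\varkappa)\bigr).
\]
Next, inequality~\eqref{nonsmooth:inew:phi_s_kapeps_phi_kappa} derived inside the proof of Lemma~\ref{nonsmooth:lm:shift} reads, after dropping the nonnegative penalty term,
\[
\phi\pairEkapeps \le \phi_\varkappa(s,\mu) + \varepsilon\,\varrho_{2}(\varkappa).
\]
Adding the two displays yields $\phi(s,\mu) - \phi_\varkappa(s,\mu) \le \omega(\varrho_{1}(\varkappa)) + \varepsilon\,\varrho_{2}(\varkappa)$; since the left-hand side is independent of the free Ekeland parameter $\varepsilon>0$, letting $\varepsilon\downarrow 0$ leaves $\phi(s,\mu) - \phi_\varkappa(s,\mu) \le \omega(\varrho_{1}(\varkappa))$.

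I would then set $\varrho_{3}(\varkappa) \triangleq \omega(\varrho_{1}(\varkappa))$. The required property $\varrho_{3}(\varkappa)\to 0$ as $\varkappa\to 0$ follows from $\varrho_{1}(\varkappa)\to 0$ (noted just after~\eqref{nonsmooth:intro:N}) and from $\omega(\kappa)\to 0$ as $\kappa\to 0$, which is the uniform continuity of $\phi$. I do not anticipate a serious obstacle: the ingredients have been prepared precisely for this assembly, and the only mild subtlety is the $\varepsilon\downarrow 0$ step — valid because the bound holds for every admissible $\varepsilon$ while the quantity being bounded is $\varepsilon$-independent. A completely $\varepsilon$-free alternative is to take a $1/n$-near-minimizing sequence $(t_n,\nu_n)$ in the definition of $\phi_\varkappa(s,\mu)$; the lower bound $\phi\ge -c_0$ forces $|s-t_n|^2 + W_2^2(\mu,\nu_n) \le 2\varkappa^2(2c_0+1/n)$, and the same uniform-continuity argument then yields $\phi(s,\mu) - \phi_\varkappa(s,\mu) \le \omega\bigl(2\varkappa\sqrt{c_0}\bigr)$ after sending $n\to\infty$.
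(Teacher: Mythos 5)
Your proof is correct, though it takes a somewhat different route from the paper's. The paper works entirely with a $1/n$-near-minimizing sequence $(t_n,\nu_n)$ in the infimum defining $\phi_\varkappa(s,\mu)$: after noting $\phi_\varkappa\le\phi$, it bounds $A_n\triangleq [|s-t_n|^2+W_2^2(\mu,\nu_n)]^{1/2}$ crudely by boundedness, then bootstraps using uniform continuity, and finally estimates $|\phi(s,\mu)-\phi_\varkappa(s,\mu)|$ by $\omega(A_n)+\tfrac{1}{2\varkappa^2}A_n^2$ and sends $n\to\infty$. That is essentially your ``$\varepsilon$-free alternative,'' which you correctly sketch at the end and which yields the bound $\omega(2\varkappa\sqrt{c_0})$.

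Your primary argument instead routes through the Ekeland pseudominimizer $\pairEkapeps$ and cites the intermediate inequality~\eqref{nonsmooth:inew:phi_s_kapeps_phi_kappa} proved inside Lemma~\ref{nonsmooth:lm:shift}. This is logically valid and gives a marginally cleaner final bound $\varrho_3=\omega\circ\varrho_1$, but it has two drawbacks worth noting. First, it depends on an unlabeled-as-a-standalone-result inequality buried inside another proof, which is a brittle dependency (if Lemma~\ref{nonsmooth:lm:shift} were reorganized the citation would break); the paper's version is self-contained. Second, your derivation of $\phi_\varkappa\le\phi$ via Ekeland is a needless detour — it follows instantly by plugging $(t,\nu)=(s,\mu)$ into the infimum in~\eqref{nonsmooth:intro:Moreau_Yosida}, and you yourself notice the parenthetical caveat about $\letterEkapeps{t}\in(0,T)$ etc.\ that the direct route avoids entirely. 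The $\varepsilon\downarrow 0$ step is sound for the reason you give: the final bound is $\varepsilon$-free even though the pseudominimizer $\pairEkapeps$ depends on $\varepsilon$. In short, both routes work, yours buys a slightly shorter display at the cost of a cross-proof citation; the paper's version is more robust and matches your own alternative.
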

\begin{proof} First notice that 
	\begin{equation}\label{nonsmooth:ineq:varphi_1}
		\phi_\varkappa(s,\mu)\leq \phi(s,\mu).
	\end{equation}
	Furthermore, there exists a sequence $\{(t_n,\nu_n)\}_{n=1}^\infty$ such that
	\[\phi_\varkappa(s,\mu)+\frac{1}{n}\geq \phi(t_n,\nu_n)+\frac{1}{2\varkappa^2}\big[|s-t_n|^2+W_2^2(\mu,\nu_n)\big].\] Using~\eqref{nonsmooth:ineq:varphi_1} and denoting $A_n\triangleq \big[|s-t_n|^2+W_2^2(\mu,\nu_n)\big]^{1/2}$, we obtain
	\begin{equation}\label{nonsmooth:ineq:varphi_kappa_2}
		A_n^2\leq 2\varkappa^2|\phi_\varkappa(s,\mu)-\phi(t_n,\nu_n)|+\frac{2\varkappa^2}{n}.
	\end{equation} Since $\phi$ is bounded on $D^{(1)}$ by the constant $c_0$ (see~\eqref{nonsmooth:intro:c_0}), we have that 
	\[A_n^2\leq 4\varkappa^2c_0+\frac{2\varkappa^2}{n}.\] Using the fact that $\phi\in \operatorname{UC}([0,T]\times D^{(1)})$ and estimate~\eqref{nonsmooth:ineq:varphi_kappa_2}, we conclude that 
	\[A_n\leq \varkappa\varrho_{3,n}'(\varkappa),\] where \[\varrho_{3,n}'(\kappa)\triangleq \sqrt{2}\Big(\omega\big(2\kappa \sqrt{c_0}+\sqrt{2}\kappa/\sqrt{n}\big)\Big)^{1/2}+\frac{\sqrt{2}}{n^{1/2}}.\]
	 Now notice that 
	\[\begin{split}
	|\phi(s,\mu)-\phi_\varkappa(s,\mu)|&=\lim_{n\rightarrow \infty}\Bigg|\phi(s,\mu)-\phi(t_n,\nu_n)-\frac{1}{2\varkappa^2}\big[|s-t_n|^2+W_2^2(\mu,\nu_n)\big]\Bigg|\\&\leq 
	\lim_{n\rightarrow\infty}\Bigg[|\phi(s,\mu)-\phi(t_n,\nu_n)|+\frac{1}{2\varkappa^2}\big[|s-t_n|^2+W_2^2(\mu,\nu_n)\big]\Bigg]\\&\leq \omega\Big(\big[|s-t_n|^2+W_2^2(\mu,\nu_n)\big]^{1/2}\Big)+\frac{1}{2\varkappa^2}\big[|s-t_n|^2+W_2^2(\mu,\nu_n)\big]\\ &\leq \omega\big(\varkappa \varrho_{3,n}'(\varkappa)\big)+\frac{1}{2}\varrho_{3,n}'(\varkappa).
	\end{split}\] Passing to the limit when $n\rightarrow\infty$, we obtain the conclusion of the lemma with 
	\[\varrho_{3}(\kappa)\triangleq \omega[\kappa \varrho_{3,\infty}'(\kappa)]+\frac{1}{2}\varrho_{3,\infty}'(\kappa).\]
	Here we denote
	\[\varrho_{3,\infty}'(\kappa)\triangleq \sqrt{2}\Big(\omega\big(2\kappa \sqrt{c_0}\big)\Big)^{1/2}.\]
\end{proof}

\section{Controlled continuity equation}\label{sect:control}

We impose the following conditions on the dynamics and the payoff functional introduced in~\eqref{intrdct:payoff:main} and~\eqref{intrdct:eq:dynamics}.
\begin{enumerate}[label=(C\arabic*)]
	\item\label{control:cond:U} $U$ is a metric compact;
	\item\label{control:cond:f_omega} there exists a continuous function $\omega_f:[0,+\infty)\rightarrow [0,+\infty)$ vanishing at zero such that, for every $s,r\in [0,T]$, $x\in\rd$, $m\in\prd$ and $u\in U$,
	\[|f(s,x,m,u)-f(r,x,m,u)|\leq \omega_f(|s-r|)(1+|x|+\varsigma(m));\]
	\item\label{control:cond:f_Lip} the function $f$ is uniformly Lipschitz continuous w.r.t.\ $x$ and $m$;
	\item\label{control:cond:L} the function $L:[0,T]\times \mathcal{P}(\rd)\times U\rightarrow \mathbb{R}$ is continuous and uniformly Lipschitz continuous w.r.t.\ the measure variable;
	\item the function $G\in\operatorname{UC}(\prd)$.
\end{enumerate}

In what following, $C_f$ and $C_L$ denote the Lipschitz constants for the functions $f$ and $L$ w.r.t.\ $x$ and $m$.

Notice that, from~\ref{control:cond:f_Lip}, it follows that there exists a constant $C_1>0$ such that, for every $t\in [0,T]$, $x\in\rd$, $m\in\prd$, $u\in U$,
\begin{equation}\label{control:ineq:sublinear}
	|f(t,x,m,u)|\leq C_1(1+|x|+\varsigma(m)).
\end{equation}

Now let us recall the definition of the solution of the continuity equation. 
\begin{definition}\label{def:prel:cont_eq_distr}
	For $s,r\in [0,T]$, $s<r$ and a measurable function $u(\cdot):[s,r]\rightarrow U$, a flow of probabilities $[s,r]\ni t\mapsto m_t\in \prd$ is a solution of continuity equation~\eqref{intrdct:eq:dynamics} on $[s,r]$ if, for every test function $\phi\in C_0^1((s,r)\times\rd)$, the following holds:
	\[\int_{s}^{r}\int_{\rd}\Big[\partial_t\phi(t,x)+\nabla\phi(t,x)f(t,x,m_t,u(t))\Big]m_t(dx)dt=0. \]
\end{definition}

Along with measurable controls, it is convenient to use a relaxed (generalized) controls. Recall \cite{Warga1972} that a relaxed control on $[s,r]$ is a measure on $[s,r]\times U$ whose marginal distribution on $[s,r]$ coincides with the Lebesgue measure. We denote the set of all relaxed controls on $[s,r]$ by $\mathcal{U}_{s,r}$, i.e., $\mathcal{U}_{s,r}\triangleq \Lambda([s,r],\lambda;U)$, where $\lambda$ denotes the Lebesgue measure. If $\xi\in\mathcal{U}_{s,r}$, the disintegration w.r.t.\ the Lebesgue measure gives the weakly measurable mapping $[s,r]\ni t\mapsto \xi(\cdot|t)\in\mathcal{P}(U)$. 

Recall that we consider on $\mathcal{U}_{s,r}$ the topology of narrow convergence. Since $U$ is compact, the space $\mathcal{U}_{s,r}$ is also compact.

The measurable controls are embedded into the space of relaxed control, i.e, a measurable control $u(\cdot):[s,r]\rightarrow U$ corresponds to the measure $\xi_{u(\cdot)}\in \mathcal{U}_{s,r}$ defined by the rule: for each $\phi\in C([s,r]\times U)$,
\[\int_{[s,r]\times U}\phi(t,u)\xi_{u(\cdot)}(d(t,u))=\int_s^r\phi(t,u(t))dt.\] Moreover, due to \cite{Warga1972}, for each $\xi\in\mathcal{U}_{s,r}$, there exists a sequence of measurable controls $\{u_n(\cdot)\}_{n=1}^\infty$ on $[s,r]$ such that $\{\xi_{u_n(\cdot)}\}_{n=1}^\infty$ converges to $\xi$.

For $s\leq \theta\leq r$, given $\xi^1\in\mathcal{U}_{s,\theta}$ and $\xi^2\in \mathcal{U}_{\theta,r}$, their concatenation $\xi^1\diamond_\theta\xi^2\in\mathcal{U}_{s,r}$ is defined via disintegration as follows:
\[(\xi^1\diamond_\theta\xi^2)(\cdot|t)\triangleq 
\begin{cases}
	\xi^1(\cdot|t), & t\in [s,\theta),\\
	\xi^2(\cdot|t), & t\in [\theta,r].
\end{cases} \]

Continuity equation~\eqref{intrdct:eq:dynamics} in this case takes formally the form 
\begin{equation}\label{control:eq:continuity_relexed}\partial_t m_t+\operatorname{div}\Bigg(\int_U f(t,x,m_t,u)\xi(du|t)\cdot m_t\Bigg)=0.\end{equation}
A solution of this continuity equation governed by the relaxed control $\xi$ is defined as follows.
\begin{definition}\label{def:prel:cont_eq_distr_relaxed}
	Let $s,r\in [0,T]$, $s<r$, $\xi\in\mathcal{U}_{s,r}$. A flow of probabilities $[s,r]\ni t\mapsto m_t$ is a solution of continuity equation~\eqref{control:eq:continuity_relexed} on the interval $[s,r]$ provided that, for each test function $\phi\in C_0^1((s,r)\times\rd)$, one has that 
	\[\int_{s}^{r}\int_{\rd}\int_U\Big[\partial_t\phi(t,x)+\nabla\phi(t,x)f(t,x,m_t,u)\Big]\xi(du|t)m_t(dx)dt=0. \]
\end{definition} Below, we denote the solution of the continuity equation on $[s,r]$ that is governed by the relaxed control $\xi$ and meets the initial condition $m(s)=\mu$ by $m_\cdot[s,\mu,\xi]$. Notice that $m_\cdot[s,\mu,\xi]\in C([s,r];\prd)$.

The seminal superposition principle gives an equivalent form of this definition. To formulate it, we first, given $s,r\in [0,T]$, $m_\cdot\in C([s,r];\prd)$, $\xi\in\mathcal{U}_{s,r}$ denote by 
$\operatorname{X}_{m_\cdot,\xi}^{s,t}$ a flow that is generated by the differential equation 
\begin{equation}\label{control:eq:particle}\frac{d}{dt}x(t)=\int_U f(t,x(t),m_\cdot,u)\xi(du|t),\end{equation} i.e., $\operatorname{X}_{m_\cdot,\xi}^{s,t}(y)$ is equal to $x(t)$, where $x(\cdot)$ is a solution~\eqref{control:eq:particle} satisfying $x(s)=y$. 

\begin{proposition}\label{prop:control:superposition} A flow of probabilities $m_\cdot\in C([s,r];\prd)$ is a solution of the continuity equation on $[s,r]$ governed by a relaxed control $\xi\in\mathcal{U}_{s,r}$ if and only if, for every $t\in [s,r]$, 
	\[m_t=X^{s,t}_{m_\cdot,\xi}\sharp m_s.\]
\end{proposition}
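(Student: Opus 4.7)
The plan is to reduce the equivalence to the classical superposition principle for the linear continuity equation driven by a Carath\'{e}odory velocity field. First I would set
\[ b_\xi(t,x) \triangleq \int_U f(t,x,m_t,u)\,\xi(du|t), \]
and, using \ref{control:cond:f_omega}, \ref{control:cond:f_Lip}, the narrow continuity of $t\mapsto m_t$, and the weak measurability of the disintegration $t\mapsto \xi(\cdot|t)$, verify that $b_\xi$ is Borel in $t$, Lipschitz in $x$ with constant $C_f$, and satisfies the linear growth bound inherited from \eqref{control:ineq:sublinear}. This ensures the ODE \eqref{control:eq:particle} has a unique Carath\'{e}odory solution for every initial datum, so the flow $\operatorname{X}^{s,t}_{m_\cdot,\xi}$ is well-defined, continuous in $(t,y)$, and satisfies a standard Gronwall estimate. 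In these terms, Definition~\ref{def:prel:cont_eq_distr_relaxed} is exactly the weak formulation of $\partial_t m_t + \operatorname{div}(b_\xi(t,\cdot)\,m_t)=0$.

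For the ``if'' implication, assume $m_t = \operatorname{X}^{s,t}_{m_\cdot,\xi}\sharp m_s$. I would test against $\phi\in C_0^1((s,r)\times\rd)$ by writing $\phi(t,\operatorname{X}^{s,t}_{m_\cdot,\xi}(y))$, applying the chain rule in $t$ (legitimate because $t\mapsto \operatorname{X}^{s,t}_{m_\cdot,\xi}(y)$ is absolutely continuous with derivative $b_\xi(t,\operatorname{X}^{s,t}(y))$), integrating over $[s,r]$ so that boundary terms vanish by compact support, then integrating over $m_s$ and applying Fubini together with the push-forward identity to move from $m_s$ to $m_t$ on each time slice. The resulting expression is precisely the weak formulation of~\eqref{control:eq:continuity_relexed}.

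For the converse, I would invoke uniqueness of narrowly continuous distributional solutions of the linear continuity equation with a spatially Lipschitz Carath\'{e}odory drift (Theorem 8.2.1 of \cite{ambrosio}, adapted to the non-autonomous setting): both $m_\cdot$ and the curve $t\mapsto \operatorname{X}^{s,t}_{m_\cdot,\xi}\sharp m_s$ are such solutions for the same drift $b_\xi$ and share the initial datum $m_s$, the second one being a solution by the ``if'' direction already proved; hence they coincide.

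The main technical obstacle I expect is not the formal computation but verifying the hypotheses needed to quote the classical superposition principle. Concretely, one must confirm joint Borel measurability of $b_\xi$ on $[s,r]\times\rd$ --- a combination of $W_2$-continuity of $t\mapsto m_t$, continuity of $f$ in $(t,x,m)$ via \ref{control:cond:f_omega} and \ref{control:cond:f_Lip}, and weak measurability of $t\mapsto \xi(\cdot|t)$ --- and check the integrability condition $\int_s^r \|b_\xi(t,\cdot)\|_{L^2(m_t)}^2\,dt<\infty$. The latter follows from \eqref{control:ineq:sublinear} and the uniform bound on $\int_\rd|x|^2 m_t(dx)$ obtained from $W_2$-continuity of $m_\cdot$ on the compact interval $[s,r]$.
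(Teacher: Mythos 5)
Your proposal is correct and follows the same route as the paper, which simply cites the superposition principle \cite[Theorem~8.2.1]{ambrosio}: the essential step in both cases is to freeze $m_\cdot$ in the drift, defining $b_\xi(t,x)=\int_U f(t,x,m_t,u)\,\xi(du|t)$, so that Definition~\ref{def:prel:cont_eq_distr_relaxed} becomes the weak formulation of a linear continuity equation to which the classical theory applies. You spell out what the paper leaves implicit (the Carath\'{e}odory/Lipschitz/growth checks on $b_\xi$ and the uniqueness argument for the converse direction), which is a fuller but not fundamentally different argument; one small remark is that Theorem 8.2.1 in \cite{ambrosio} is already stated for time-dependent velocity fields, so no adaptation to the non-autonomous setting is required.
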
 This statement directly follows from the \cite[Theorem 8.2.1]{ambrosio}

Notice that, given $s\leq \theta\leq r$, $\mu\in\prd$, $\xi^1\in\mathcal{U}_{s,\theta}$ and $\xi^2\in \mathcal{U}_{\theta,r}$, one has that, for $t\in [s,\theta]$ 
\begin{equation}\label{control:equality:conc_m_first}
	m_t[s,\mu,\xi^1\diamond_\theta\xi^2]=m_t[s,\mu,\xi^1],
\end{equation} while, for $t\in [\theta,r]$,
\begin{equation}\label{control:equality:conc_second}
	m_t[s,\mu,\xi^1\diamond_\theta\xi^2]=m_t[\theta,m_\theta[s,\mu,\xi^1]],\xi^2).
\end{equation}

The sequence $\{m_\cdot[s,\mu_n,\xi_n]\}_{n=1}^\infty$ converges to $m_\cdot[s,\mu,\xi]$ in the case where $\{\mu_n\}$ and $\{\xi_n\}$ converge to $\mu$ and $\xi$ respectively. This property is formalized as follows.
\begin{proposition}\label{control:prop:convergence} Let 
	\begin{itemize}
		\item $\{\mu_n\}_{n=1}^\infty\subset \prd$ converge to $\mu\in\prd$;
		\item $s,r\in [0,T]$, $s<r$;
		\item $\{\xi_n\}_{n=1}^\infty\subset\mathcal{U}_{s,r}$ converge to $\xi\in\mathcal{U}_{s,r}$.
	\end{itemize} Then $\{m_\cdot[s,\mu_n,\xi_n]\}_{n=1}^\infty$ converges to 
	$m_\cdot[s,\mu,\xi]$.
\end{proposition}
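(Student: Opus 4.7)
The plan is to combine uniform \emph{a priori} bounds and equicontinuity with a compactness--uniqueness argument in the weak form of the continuity equation. Write $m_\cdot^n \triangleq m_\cdot[s,\mu_n,\xi_n]$.

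\textbf{Uniform estimates.} First I would show $\sup_n \sup_{t\in[s,r]} \varsigma(m_t^n) < \infty$. Since $\mu_n \to \mu$ in $\prd$, the initial second moments are uniformly bounded, and the sublinear growth~\eqref{control:ineq:sublinear} of $f$ together with the characteristic representation~\eqref{control:eq:particle} and Gronwall's inequality yield the claim. The same bound also gives a uniform time-Lipschitz estimate $W_2(m_{t_1}^n, m_{t_2}^n) \leq C|t_2-t_1|$, because under the flow generated by \eqref{control:eq:particle} points move with bounded speed once $\varsigma(m_t^n)$ is controlled.

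\textbf{Compactness.} The uniform second moment bound, together with $W_2$-convergence of $\mu_n$, provides tightness and uniform integrability of $|x|^2$ under the family $\{m_t^n\}_{n,t}$, hence pointwise precompactness in $\prd$. Combined with the time-equicontinuity above, an Arzel\`a--Ascoli-type argument extracts a subsequence (not relabeled) converging uniformly to some $\tilde m_\cdot \in C([s,r];\prd)$ with $\tilde m_s = \mu$.

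\textbf{Limit identification.} The key step is to show that $\tilde m_\cdot$ solves the continuity equation on $[s,r]$ with initial condition $\mu$ and relaxed control $\xi$; uniqueness (guaranteed by~\ref{control:cond:f_Lip}) will then force $\tilde m_\cdot = m_\cdot[s,\mu,\xi]$. Fix $\phi \in C_0^1((s,r)\times\rd)$. Using the uniform Lipschitz continuity of $f$ in $m$ (condition~\ref{control:cond:f_Lip}) and $\sup_t W_2(m_t^n,\tilde m_t)\to 0$, I would replace $f(t,x,m_t^n,u)$ by $f(t,x,\tilde m_t,u)$ inside the weak formulation with an error that vanishes as $n \to \infty$. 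The remaining task is to pass to the limit in
\[
\int_s^r \int_{\rd} \int_U \nabla\phi(t,x)\, f(t,x,\tilde m_t,u)\,\xi_n(du|t)\, m_t^n(dx)\, dt,
\]
which reduces to showing that the product measures $\xi_n(du|t)\, m_t^n(dx)\, dt$ converge narrowly on $[s,r]\times\rd\times U$ to $\xi(du|t)\,\tilde m_t(dx)\, dt$. This follows from the uniform $W_2$-convergence $m_\cdot^n\to\tilde m_\cdot$ and the narrow convergence $\xi_n\to\xi$, using that $x$ and $u$ are conditionally independent given $t$.

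The main obstacle is this joint limit passage, because the integrand depends nonlinearly on $m_t^n$ while simultaneously being integrated against $m_t^n$ and $\xi_n$; the Lipschitz decoupling in $m$ is the crucial reduction that separates the two convergences. Once $\tilde m_\cdot = m_\cdot[s,\mu,\xi]$ is identified, a standard subsequence argument (every subsequence of $\{m_\cdot^n\}$ admits a further subsequence converging uniformly to the same limit) upgrades convergence along a subsequence to convergence of the entire sequence in $C([s,r];\prd)$.
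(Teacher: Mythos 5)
The paper does not contain its own proof of this proposition; it simply cites \cite[Lemma~6.1]{Averboukh23}. Your compactness--uniqueness argument is a standard and correct way to establish this kind of stability result, and I believe it is sound. The uniform estimates and equicontinuity follow exactly from Proposition~\ref{app:prop:bounds}, and the uniform integrability of second moments (needed for pointwise precompactness in $\prd$, not implied by bounded second moments alone) is indeed supplied, as you note, by propagating the uniform integrability of $\{\mu_n\}$ (which follows from $W_2$-convergence) through the flow via the growth bound on $X^{s,t}$, in the spirit of Corollary~\ref{app:corollary:compact}.

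The one step you assert without fully proving --- the narrow convergence of the fiber-product measures $\xi_n(du|t)\,m^n_t(dx)\,dt$ to $\xi(du|t)\,\tilde m_t(dx)\,dt$ --- is true but deserves to be spelled out, since the two constituent convergences are of different type (narrow for $\xi_n$, uniform in $W_2$ for $m^n_\cdot$) and hold against the same moving sequence. For a test function $\psi\in C_b([s,r]\times\rd\times U)$, set $g_n(t,u)\triangleq\int_{\rd}\psi(t,x,u)\,m^n_t(dx)$ and $g(t,u)\triangleq\int_{\rd}\psi(t,x,u)\,\tilde m_t(dx)$; then
\[
\int\psi\,d\Pi_n-\int\psi\,d\Pi=\int(g_n-g)\,d\xi_n+\int g\,d(\xi_n-\xi),
\]
the first term being controlled by $\sup_{t,u}|g_n-g|\to 0$ (uniform $W_2$-convergence plus the modulus of continuity of $\psi$ on the relevant compact set) and the second by narrow convergence of $\xi_n$ against the fixed function $g$, which is in $C_b([s,r]\times U)$ precisely because $\tilde m_\cdot\in C([s,r];\prd)$. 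Your appeal to ``conditional independence given $t$'' correctly names the structure that makes this split work, and the Lipschitz decoupling of $f$ in the measure variable is indeed the crucial reduction that separates the two limits. You also omit the (easy) passage to the limit in the $\partial_t\phi$ term, but that is immediate from uniform convergence of $m^n_\cdot$. Overall the proposal is correct; I cannot directly compare it with the paper's argument, which is delegated to an external reference.
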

This statement is proved in \cite[Lemma 6.1]{Averboukh23}.

Now, we reformulate the optimal control problem~\eqref{intrdct:payoff:main},~\eqref{intrdct:eq:dynamics} using the relaxed controls. Given $s_*\in [0,T]$, $\mu\in \prd$, we consider the following optimal control problem:
\begin{equation*}\label{problem:control:criterion}
	\text{minimize }J[s_*,\mu,\xi]\triangleq \int_{[s_*,T]\times U}L(t,m_t[s_*,\mu_*,\xi],u)\xi(d(t,u))+G(m_T[s_*,\mu_*,\xi])
\end{equation*} over the set $\xi\in\mathcal{U}_{s_*,T}$.

Thus, the value function of optimal control problem~\eqref{intrdct:payoff:main},~\eqref{intrdct:eq:dynamics} is defined by the rule:
\[\operatorname{Val}(s_*,\mu_*)\triangleq \inf_{\xi\in \mathcal{U}_{s_*,T}}J[s_*,\mu_*,\xi].\]

Propositions~\ref{control:prop:convergence} and \ref{app:prop:bounds} immediately imply the following.
\begin{theorem}\label{control:th:existence} There exists a relaxed control $\xi^*\in\mathcal{U}_{s_*,T}$ such that 
	\[J[s_*,\mu_*,\xi^*]= \operatorname{Val}(s_*,\mu_*).\] Moreover, the function $\operatorname{Val}$ is uniformly continuous on each bounded subset of $[0,T]\times\prd$.
\end{theorem}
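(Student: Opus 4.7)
The plan is to establish the two assertions separately, both resting on compactness of the relaxed control space and the stability properties already recorded.

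For existence, I would fix $(s_*,\mu_*)$ and take a minimizing sequence $\{\xi_n\}_{n=1}^\infty \subset \mathcal{U}_{s_*,T}$ with $J[s_*,\mu_*,\xi_n] \to \operatorname{Val}(s_*,\mu_*)$. Since $U$ is a metric compact and $\mathcal{U}_{s_*,T}$ is compact in the narrow topology (as recalled just before the theorem), a subsequence converges to some $\xi^* \in \mathcal{U}_{s_*,T}$. Proposition~\ref{control:prop:convergence} then yields $m_\cdot[s_*,\mu_*,\xi_n] \to m_\cdot[s_*,\mu_*,\xi^*]$ in $C([s_*,T];\prd)$. To conclude $J[s_*,\mu_*,\xi_n] \to J[s_*,\mu_*,\xi^*]$, I would pass to the limit in the terminal term using uniform continuity of $G$, and in the running term by writing $L(t,m_t[s_*,\mu_*,\xi_n],u) = L(t,m_t[s_*,\mu_*,\xi^*],u) + o(1)$ uniformly in $(t,u)$ via Lipschitz continuity of $L$ in the measure variable, and then exploiting the narrow convergence $\xi_n \to \xi^*$ against the resulting continuous integrand.

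For uniform continuity of $\operatorname{Val}$ on a bounded set $B \subset [0,T]\times\prd$, take $(s_1,\mu_1),(s_2,\mu_2) \in B$ with $s_1 \leq s_2$. The idea is to compare almost-optimal strategies: given $\xi_2 \in \mathcal{U}_{s_2,T}$ that is $\varepsilon$-optimal for $(s_2,\mu_2)$, I would extend it to $\widetilde\xi \in \mathcal{U}_{s_1,T}$ by concatenating any relaxed control on $[s_1,s_2]$ via $\diamond_{s_2}$, and use~\eqref{control:equality:conc_second} to identify the trajectory tails. Then $\operatorname{Val}(s_1,\mu_1) - J[s_2,\mu_2,\xi_2]$ is controlled by (a) the running cost on $[s_1,s_2]$, bounded by a constant times $|s_2-s_1|$ thanks to Proposition~\ref{app:prop:bounds} providing uniform a priori bounds on trajectories starting from $B$ and to continuity of $L$ on the resulting bounded set; (b) the Wasserstein distance $W_2(m_{s_2}[s_1,\mu_1,\widetilde\xi],\mu_2)$, which the same proposition estimates by a modulus of $|s_2-s_1| + W_2(\mu_1,\mu_2)$; and (c) the propagation of this initial discrepancy along the common tail, again via Proposition~\ref{app:prop:bounds} combined with the Lipschitz continuity of $L$ and the uniform continuity of $G$. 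Swapping the roles of the two points gives the reverse inequality, hence a two-sided modulus estimate depending only on $|s_1-s_2| + W_2(\mu_1,\mu_2)$ and on $B$.

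The only non-routine point I anticipate is bookkeeping: because $\xi_1$ and $\xi_2$ naturally live on different time intervals, one cannot estimate $|J[s_1,\mu_1,\xi] - J[s_2,\mu_2,\xi]|$ directly, and the concatenation and restriction operations must be organized so that the extracted modulus of continuity depends only on $B$, not on the particular points. All genuinely analytic content, however, is already packaged in Propositions~\ref{control:prop:convergence} and~\ref{app:prop:bounds}, as the sentence preceding the theorem indicates.
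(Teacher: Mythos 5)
Your plan fills in exactly the details the paper leaves implicit: the paper proves this theorem in one line, citing Propositions~\ref{control:prop:convergence} and~\ref{app:prop:bounds}, and your compactness/trajectory-convergence argument for existence together with your concatenation argument for uniform continuity are the intended elaboration. One small misattribution in step~(c): propagating the initial discrepancy $W_2(m_{s_2}[s_1,\mu_1,\widetilde\xi],\mu_2)$ along the common tail $[s_2,T]$ requires a Gr\"onwall-type bound of the form $W_2(m_t[s_2,\nu,\xi_2],m_t[s_2,\mu_2,\xi_2])\le e^{C(t-s_2)}W_2(\nu,\mu_2)$, which is a consequence of the Lipschitz continuity of $f$ in $(x,m)$ (condition~\ref{control:cond:f_Lip}) and not of Proposition~\ref{app:prop:bounds}, which only bounds a single trajectory against its own initial datum; Proposition~\ref{control:prop:convergence} cannot substitute for it either, since it is a sequential, non-quantitative statement and bounded sets in $\prd$ are not precompact, so no compactness argument can upgrade it to a uniform modulus. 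Apart from that attribution, your argument is correct and coincides with what the paper asserts.
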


We complete this with the dynamic programming principle.
\begin{theorem}\label{control:th:dpp} Let $s_*\in [0,T]$, $\mu_*\in\prd$, $\theta\in [s_*,T]$. Then,
	\[\operatorname{Val}(s_*,\mu_*)=\min_{\xi\in \mathcal{U}_{s_*,\theta}}\operatorname{Val}(\theta,m(\theta,s_*,\mu_*,\xi)).\]
\end{theorem}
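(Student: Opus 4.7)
The plan is to establish the dynamic programming principle by the standard two-sided comparison, relying on the concatenation identities~\eqref{control:equality:conc_m_first}--\eqref{control:equality:conc_second}, additivity of the $\xi$-integral in the payoff $J$, compactness of $\mathcal{U}_{s_*,\theta}$, and the continuity results of Proposition~\ref{control:prop:convergence} and Theorem~\ref{control:th:existence}. I read the right-hand side, as is standard and as is surely intended, as also carrying the running cost accrued on $[s_*,\theta]$, namely
\[
\operatorname{Val}(s_*,\mu_*) = \min_{\xi^1\in \mathcal{U}_{s_*,\theta}} \biggl[\int_{[s_*,\theta]\times U}\! L\bigl(t,m_t[s_*,\mu_*,\xi^1],u\bigr)\xi^1(d(t,u)) + \operatorname{Val}\bigl(\theta,m_\theta[s_*,\mu_*,\xi^1]\bigr)\biggr].
\]

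For the ``$\leq$'' inequality I fix an arbitrary $\xi^1 \in \mathcal{U}_{s_*,\theta}$, put $\nu \triangleq m_\theta[s_*,\mu_*,\xi^1]$, and use Theorem~\ref{control:th:existence} to select $\xi^{2,*} \in \mathcal{U}_{\theta,T}$ attaining $\operatorname{Val}(\theta,\nu)$. Forming $\xi \triangleq \xi^1\diamond_\theta \xi^{2,*} \in \mathcal{U}_{s_*,T}$, identities~\eqref{control:equality:conc_m_first}--\eqref{control:equality:conc_second} force $m_\cdot[s_*,\mu_*,\xi]$ to agree with $m_\cdot[s_*,\mu_*,\xi^1]$ on $[s_*,\theta]$ and with $m_\cdot[\theta,\nu,\xi^{2,*}]$ on $[\theta,T]$. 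Splitting $J[s_*,\mu_*,\xi]$ at $t=\theta$ thus produces exactly the running cost on $[s_*,\theta]$ plus $J[\theta,\nu,\xi^{2,*}] = \operatorname{Val}(\theta,\nu)$. Since $\operatorname{Val}(s_*,\mu_*) \leq J[s_*,\mu_*,\xi]$ and $\xi^1$ was arbitrary, taking the infimum over $\xi^1$ yields the upper bound.

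For the ``$\geq$'' direction I take an arbitrary $\xi \in \mathcal{U}_{s_*,T}$ and disintegrate it with respect to the Lebesgue marginal: the measures $\xi^1\in \mathcal{U}_{s_*,\theta}$ and $\xi^2\in \mathcal{U}_{\theta,T}$ obtained by restricting the disintegration $(\xi(\cdot|t))_t$ to the two subintervals satisfy $\xi = \xi^1\diamond_\theta \xi^2$ (modulo a Lebesgue-null correction at $t=\theta$). Setting $\nu \triangleq m_\theta[s_*,\mu_*,\xi^1]$, the same concatenation identities decompose the trajectory and give $J[s_*,\mu_*,\xi] = \int_{[s_*,\theta]\times U} L\,d\xi^1 + J[\theta,\nu,\xi^2]$. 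Bounding $J[\theta,\nu,\xi^2] \geq \operatorname{Val}(\theta,\nu)$ and infimising over $\xi$ produces the matching lower bound.

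To upgrade the infimum to a minimum, I combine compactness of $\mathcal{U}_{s_*,\theta}$ (inherited from that of $U$) with continuity of $\xi^1 \mapsto m_\cdot[s_*,\mu_*,\xi^1]$ from Proposition~\ref{control:prop:convergence}, continuity of $L$, and uniform continuity of $\operatorname{Val}$ on bounded sets from Theorem~\ref{control:th:existence}; together they make the bracketed functional a continuous function of $\xi^1$ on a compact set, so the infimum is attained. The main technical point I anticipate is verifying the disintegration identity $\xi = \xi^1\diamond_\theta\xi^2$ cleanly (taking care of the measure-zero slice at $t=\theta$ and of weak measurability of the restricted conditionals), since the rest of the argument is merely the additivity of the integral over $[s_*,\theta]\cup[\theta,T]$ combined with the concatenation formulas.
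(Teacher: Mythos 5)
Your proof is correct and is exactly the ``standard argument'' the paper alludes to when it states that the proof follows from equalities~\eqref{control:equality:conc_m_first}--\eqref{control:equality:conc_second} and is therefore omitted. You also correctly observed that the theorem as printed is missing the running-cost term $\int_{[s_*,\theta]\times U} L\,d\xi$ on the right-hand side; this is indeed a typo, as confirmed by the way the DPP is invoked in the proof of Theorem~\ref{Bellman:th:characterization}, where the running cost appears.
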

The proof follows from standard arguments and equalities~\eqref{control:equality:conc_m_first},~\eqref{control:equality:conc_second}; thus, we omit it.

\section{Characterization of the value function}\label{sect:Bellman}
In this section, we show that the value function is a viscosity solution of the corresponding Bellman equation that is a Hamilton-Jacobi PDE in the space of measures. As we mentioned in the Introduction, this result refines that of~\cite{Badreddine2021}, where only compactly supported measures were considered. Following the approach of~\cite{Jimenez2020,Jimenez2023}, we employ $\varepsilon$-sub- and superdifferentials.

We assign to the original control problem~\eqref{intrdct:payoff:main},~\eqref{intrdct:eq:dynamics} the Hamiltonian $H$ that is defined for $s\in [0,T]$, $\mu\in\prd$, $p\in \LTwoS{\mu}$ by the rule:
\begin{equation}\label{Bellman:intro:Hamiltonian}H(s,\mu,p)\triangleq \min_{u\in U}\Bigg[\int_{\rd}p(x) f(s,x,\mu,u)\mu(dx)+L(s,\mu,u)\Bigg].\end{equation}

The Bellman equation that corresponds to problem~\eqref{intrdct:payoff:main},~\eqref{intrdct:eq:dynamics} takes the form:
\begin{equation}\label{Bellman:eq:HJ}
	\partial_t\varphi+H(t,\mu,\nabla_\mu\varphi)=0. \end{equation} This equation is endowed with the boundary condition
\begin{equation}\label{Bellman:cond:boundary}
	\varphi(T,\mu)=G(\mu).
\end{equation}
In~\eqref{Bellman:eq:HJ}, we formally use 	$\nabla_\mu\varphi$ for the derivative of the function $\varphi$ w.r.t.\ the measure variable. We will proceed as in~\cite{Crandall1985} and introduce strict viscosity solutions.

\begin{definition}\label{Bellman:def:viscosity_solution} We say that a function $\varphi\in\operatorname{USCB}([0,T]\times\prd)$ is a strict viscosity subsolution of~\eqref{Bellman:eq:HJ} if, given a bounded subset $D\subset \prd$, one can find a constant $C(D)\geq 0$ such that, for every $s\in (0,T)$, $\mu\in D$, $\varepsilon>0$, $(a,p)\in \partial_{D,\varepsilon}^+\varphi(s,\mu)$,
	\[a+H(t,\mu,p)\geq - C(D)\varepsilon.\] Analogously, a function $\varphi\in\operatorname{LSCB}([0,T]\times\prd)$ is a strict viscosity supersolution of~\eqref{Bellman:eq:HJ} provided that, given a bounded subset $D$ of $\prd$, there exists a constant $C(D)\geq 0$ such that 
	\[a+H(t,\mu,p)\leq C(D)\varepsilon\] whenever $s\in (0,T)$, $\mu\in D$, $\varepsilon>0$, $(a,p)\in \partial_{D,\varepsilon}^-\varphi(t,\mu)$.
	
	Finally, a continuous function $\varphi:[0,T]\times\prd\rightarrow\mathbb{R}$ is called a strict viscosity solution of~\eqref{Bellman:eq:HJ} if it is a strict viscosity subsolution and a strict viscosity supersolution simultaneously.
\end{definition}
\begin{remark}
We say that a function $\varphi\in\operatorname{USCB}([0,T]\times\prd)$ is a viscosity subsolution if \[a+H(t,\mu,p)\geq 0\] for every $(s,\mu)\in (0,T)\times\prd$ and $(a,p)\in \partial_{D,0}^+\phi(s,\mu)$. 

Similarly,
a function $\varphi\in\operatorname{LSCB}([0,T]\times\prd)$ is a viscosity supersolution provided that \[a+H(t,\mu,p)\geq 0\] for every $(s,\mu)\in (0,T)\times\prd$ and $(a,p)\in \partial_{D,0}^-\phi(s,\mu)$.

 Since 
	\[\partial_{D,0}^+\phi(s,\mu)=\bigcap_{\varepsilon>0}\partial_{D,\varepsilon}^+\phi(s,\mu),\] each strict viscosity subsolution is viscosity subsolution. The same concerns the supersolutions.
\end{remark}

\begin{theorem}\label{Bellman:th:characterization} The value function of optimal control~\eqref{intrdct:payoff:main},~\eqref{intrdct:eq:dynamics} is a strict viscosity solution of Bellman equation~\eqref{Bellman:eq:HJ} and satisfies the boundary condition \[\operatorname{Val}(T,\mu)=G(\mu).\]
\end{theorem}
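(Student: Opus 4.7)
The boundary condition $\operatorname{Val}(T,\mu) = G(\mu)$ is immediate from the definition of $\operatorname{Val}$. For the sub- and supersolution inequalities, the plan is to combine the dynamic programming principle (Theorem~\ref{control:th:dpp}) with the superposition principle (Proposition~\ref{prop:control:superposition}), using the latter to identify $m_{s+h} = m_{s+h}[s,\mu,\xi]$ with the shift $(\operatorname{Id} + h v_h')\sharp\mu$, where $v_h'(y) := h^{-1}(X^{s,s+h}_{m_\cdot,\xi}(y) - y)$. A standard Gronwall estimate based on~\ref{control:cond:f_omega}--\ref{control:cond:f_Lip} shows that $v_h'$ converges in $\LTwo{\mu}$ to the instantaneous average drift at $s$, and the sublinear bound~\eqref{control:ineq:sublinear} delivers a uniform constant $C(D)$ controlling $\|(1,v)\|_{\mathbb{R}\times\LTwo{\mu}}$ on $D$.

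For the strict viscosity subsolution inequality, I fix $(a,p) \in \partial_{D,\varepsilon}^+\operatorname{Val}(s,\mu)$ and test against an arbitrary constant control $u \in U$ on $[s, s+h]$. The one-sided DPP gives $\operatorname{Val}(s+h, m_{s+h}) - \operatorname{Val}(s,\mu) \geq -hL(s,\mu,u) + o(h)$, while $m_{s+h} = (\operatorname{Id} + hv_h')\sharp\mu$ with $v_h' \to f(s,\cdot,\mu,u)$ in $\LTwo{\mu}$. Evaluating the $\limsup$ in Definition~\ref{nonsmooth:def:directional:diffe} along the explicit sequence $(h, 1, v_h')$, dividing by $h$, and passing to the limit yields
\[
-L(s,\mu,u) - a - \int_{\rd} p(x)f(s,x,\mu,u)\mu(dx) \leq \varepsilon\,\|(1, f(s,\cdot,\mu,u))\|_{\mathbb{R}\times\LTwo{\mu}}.
\]
Minimizing the left-hand side over $u \in U$ delivers $a + H(s,\mu,p) \geq -C(D)\varepsilon$.

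The supersolution half follows the same template but uses an optimal relaxed control $\xi^* \in \mathcal{U}_{s,T}$ (whose existence is guaranteed by Theorem~\ref{control:th:existence}), for which DPP becomes the equality $\operatorname{Val}(s+h, m_{s+h}) - \operatorname{Val}(s,\mu) = -\int_{[s,s+h]\times U} L(t,m_t,u)\xi^*(d(t,u))$. I expect the main obstacle here to be that $\xi^*(\cdot|t)$ is only defined almost everywhere in $t$, so one cannot pointwise evaluate the drift at $t = s$. To bypass this I introduce the time-averaged probabilities $\zeta_h(B) := h^{-1}\xi^*([s,s+h] \times B) \in \mathcal{P}(U)$ and use compactness of $\mathcal{P}(U)$ (since $U$ is a metric compact by~\ref{control:cond:U}) to extract a sequence $h_n \downarrow 0$ with $\zeta_{h_n}$ converging narrowly to some $\zeta$. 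Uniform continuity of $f$ and $L$, together with continuity of the flow $t \mapsto m_t$, then identifies $h_n^{-1}\int L\,d\xi^* \to \bar{L} := \int_U L(s,\mu,u)\zeta(du)$ and $v_{h_n}' \to \bar{f}(\cdot) := \int_U f(s,\cdot,\mu,u)\zeta(du)$ in $\LTwo{\mu}$. Since the $\liminf$ defining $\partial_{D,\varepsilon}^-\operatorname{Val}(s,\mu)$ is bounded above by the limit along this specific sequence, I obtain $a + \langle p, \bar{f}\rangle_\mu + \bar{L} \leq \varepsilon\|(1, \bar{f})\|_{\mathbb{R}\times\LTwo{\mu}}$, and since $\langle p, f(s,\cdot,\mu,u)\rangle_\mu + L(s,\mu,u) \geq H(s,\mu,p)$ pointwise in $u$, integrating against $\zeta$ preserves the inequality and gives $a + H(s,\mu,p) \leq C(D)\varepsilon$.
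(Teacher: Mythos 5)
Your proposal is correct and follows essentially the same route as the paper's proof: one-sided DPP with a constant control plus the superdifferential definition for the subsolution inequality, and DPP equality together with time-averaging of the (nearly) optimal relaxed control, narrow compactness of $\mathcal{P}(U)$, and the subdifferential definition for the supersolution inequality. The only cosmetic deviation is that you take a globally optimal $\xi^*\in\mathcal{U}_{s_*,T}$ from Theorem~\ref{control:th:existence}, whereas the paper takes, for each step $h$, a minimizer $\xi^h\in\mathcal{U}_{s,s+h}$ of the DPP formula; both yield the same equality and the remainder of the argument is unchanged.
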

\begin{proof} 
	First, notice that the boundary condition is obvious.
	
	Furthermore, we prove that $\operatorname{Val}$ is a subsolution of~\eqref{Bellman:eq:HJ}.
	
	To this end, we fix a bounded set $D\subset\prd$, $(s,\mu)\in (0,T)\times D$, $\varepsilon>0$ and $(a,p)\in \partial^+_{D,\varepsilon}\operatorname{Val}(s,\mu)$. Furthermore, we choose $\hat{u}\in U$ arbitrarily. We let \[\zeta\triangleq \delta_{\hat{u}},\, v(y)\triangleq f(s,y,\mu,\hat{u})=\int_U f(s,y,\mu,u)\zeta(du).\] Additionally, we denote $m_\cdot\triangleq m_\cdot[s,\mu,\xi]$, where $\xi(\cdot|t)\triangleq \zeta(\cdot)$ whenever $t\in [s,s+r]$, $r>s$, and, for each $h\in (0,r-s]$, put \[v^h\triangleq \frac{1}{h}\int_s^{s+h}f(t,X^{s,t}_{m_\cdot},m_t,u)\zeta(du).\] Proposition~\ref{app:prop:v_zeta} gives that $\|v^h-v\|_{\LTwo{\mu}}\rightarrow 0$ as $h\rightarrow 0$. Thus, we use in the definition of the directional superdifferential (see Definition~\ref{nonsmooth:def:directional:diffe}) $\theta'=\theta=1$ and obtain, for each $\kappa>0$ and sufficiently small $h$
	\begin{equation}\label{Bellman:eq:super_diff_Val}
		h(a+\langle p,v^h\rangle_\mu)+\kappa h+h\varepsilon\|(1,v^h)\|_{\mathbb{R}\times\LTwo{\mu}}\geq \operatorname{Val}(s+h,(\operatorname{Id}+hv^h)\sharp\mu)-\operatorname{Val}(s,\mu). \end{equation}
	The dynamic programming principle (see Theorem~\ref{control:th:dpp}) implies that 
	\begin{equation}\label{Bellman:ineq:DPP_u}\operatorname{Val}(s,\mu)\leq \operatorname{Val}(s+h,m_{s+h})+\int_s^{s+h} L(t,m_t,\hat{u})dt.\end{equation} Moreover, by construction of the velocity fields $v^h$, we have that 
	\[m_{s+h}=(\operatorname{Id}+h v^h)\sharp\mu.\]
	From this,~\eqref{Bellman:ineq:DPP_u} and~\eqref{Bellman:eq:super_diff_Val}, we have that 
	\[h(a+\langle p,v^h\rangle_\mu)+\kappa h+h\varepsilon\|(1,v^h)\|_{\mathbb{R}\times\LTwo{\mu}}+ \int_s^{s+h} L(t,m_t,\hat{u})dt\geq 0.\] Dividing both parts on $h$, passing to the limit as $h\rightarrow0$ and taking into account Proposition~\ref{app:prop:L_limit_zeta} as well as convergence of $v^h$ to $v$, we conclude that 
	\begin{equation}\label{Bellman:ineq:preBellman_geq}a+\langle p,v\rangle_\mu+L(t,\mu,\hat{u})\geq -\kappa-\varepsilon\|(1,v)\|_{\mathbb{R}\times\LTwo{\mu}}.\end{equation} Now recall that $v(x)=f(s,x,\mu,u)$. Using~\eqref{control:ineq:sublinear}, we conclude that 
	\[\|(1,v)\|_{\mathbb{R}\times\LTwo{\mu}}\leq C(D),\]
	where
	\begin{equation}\label{Bellman:intro:C_D}
		C(D)\triangleq \sup_{m\in D}\big(1+(C_1)^2(1+2\varsigma(m))^2\big)^{1/2}.
	\end{equation}
	 Since $\hat{u}\in U$ is arbitrary, inequality~\eqref{Bellman:ineq:preBellman_geq} implies
	\[a+H(s,\mu,p)\geq -C(D)\varepsilon.\] Thus, $\operatorname{Val}$ is a subsolution of~\eqref{Bellman:eq:HJ}. 
	
	
		To prove the fact that $\operatorname{Val}$ is a supersolution of ~\eqref{Bellman:eq:HJ}, we, as above, fix a bounded set $D\subset\prd$ $s,\mu\in (0,T)\times D$, $\varepsilon>0$ and $(a,p)\in\partial_{D,\varepsilon}^-\operatorname{Val}(s,\mu)$. For each $h>0$, due to the dynamic programming principle there exists a relaxed control $\xi^h\in\mathcal{U}_{s,s+h}$ such that 
	\begin{equation}\label{Bellman:equality:Val_optimal_h}
		\operatorname{Val}(s,\mu)= \operatorname{Val}(s+h,m_{s+h}[s,\mu,\xi^h])+\int_s^{s+h}\int_U L(t,m_t[s,\mu,\xi^h],u)\xi^h(du|t)dt.
	\end{equation} One can define the averaging of the measure on $\xi^h$ over time interval $[s,s+h]$, i.e., we put, for each $\psi\in C(U)$,
	\[\int_U\psi(u)\zeta^h(du)\triangleq \frac{1}{h}\int_s^{s+h}\int_U\psi(u)\xi^h(du|t)dt.\] Notice that $\zeta^h\in\mathcal{P}(U)$. Since $\mathcal{P}(U)$ is compact, there exist a sequence $\{h_n\}_{n=1}^\infty$ tending to zero and a probability $\zeta\in\mathcal{P}(U)$ such that $\zeta^{(n)}\triangleq \zeta^{h_n}\rightarrow \zeta$ as $n\rightarrow \infty$. We use the notation of the Appendix and put, $\xi^{(n)}\triangleq \xi^{h_n}$, $m^{(n)}_\cdot\triangleq m_\cdot[s,\mu,\xi^{(n)}]$, \[v^{(n)}(y)\triangleq \frac{1}{h_n}\int_s^{s+h_n}f(t,X^{s,t}_{m^{(n)}_\cdot}(y),m^{(n)}_t,u)\xi^{(n)}(du|t)dt,\]
	\[v(y)\triangleq \int_U f(s,y,\mu,u)\zeta(du).\] Notice that $ m^{(n)}_{s+h_n}= (\operatorname{Id}+h_n v^{(n)})\sharp\mu.$
	Proposition~\ref{app:prop:v_xi_zeta_n} gives that $\|v^{(n)}-v\|_{\LTwo{\mu}}\rightarrow 0$ as $n\rightarrow \infty$. Thus, since $(a,p)\in\partial_{D,\varepsilon}^-\operatorname{Val}(s,\mu)$, given $\kappa>0$, one can find $N$ such that, for every $n>N$, the following inequality holds:
	\begin{equation}\label{Bellman:ineq:superdiff_Val}
	\begin{split}
		h_n(a+\langle p,v^{(n)}\rangle_\mu)-&\kappa h_n-\varepsilon\|(1,v^{(n)})\|_{\mathbb{R}\times\LTwo{\mu}}\\&\leq \operatorname{Val}(s+h_n,(\operatorname{Id}+h_n v^{(n)})\sharp\mu)-\operatorname{Val}(s,\mu).\end{split}
	\end{equation} This, the fact that $m_{s+h_n}[s,\mu,\xi^{h_n}]=m^{(n)}_{s+h_n}= (\operatorname{Id}+h_n v^{(n)})\sharp\mu$ and equality~\eqref{Bellman:equality:Val_optimal_h} imply
	\[\begin{split}
	h_n(a+\langle p,v^{(n)}\rangle_\mu)-&\kappa h_n-h_n\varepsilon\|(1,v^{(n)})\|_{\mathbb{R}\times\LTwo{\mu}}\\&\leq -\int_s^{s+h_n}\int_U L(t,m_t^{(n)},u)\xi^{(n)}(du|t)dt.\end{split}\] Now, we divide both parts on $h_n$, pass to the limit when $n\rightarrow\infty$ and use Propositions~\ref{app:prop:v_xi_zeta_n} and~\ref{app:prop:L_xi_zeta}. Hence, 
	\[a+\langle p,v\rangle_\mu+\int_U L(s,\mu,u)\zeta(du)\leq \kappa+\varepsilon\|(1,v)\|_{\mathbb{R}\times\LTwo{\mu}}.\] Now recall that $\|(1,v)\|_{\mathbb{R}\times\LTwo{\mu}}\leq C(D)$. Since $\kappa$ is an arbitrary positive number, due to the Tonelli theorem, we conclude that 
	\[a+\int_U\bigg[\int_{\rd}p(y)f(s,y,\mu,u)\mu(dy)+L(s,\mu,u)\bigg]\zeta(du)\leq C(D)\varepsilon.\] This directly gives the inequality
	\[a+H(s,\mu,p)\leq C(D)\varepsilon.\] The latter means that $\operatorname{Val}$ is a supersolution of~\eqref{Bellman:eq:HJ}.
	
\end{proof}

\section{Feedback strategies and upper bound of the value function}\label{sect:feedback}
First, let us recall the Krasovskii-Subbotin approach to the definition of the feedback strategy. It implies that a feedback strategy is an arbitrary function $\mathfrak{u}:[0,T]\times\prd\rightarrow U$. To construct a motion, we fix an initial time $s_*$, initial distribution $\mu_*$ and a partition $\Delta=(s_i)_{i=0}^n$ of the time interval $[s_*,T]$. The motion $m_\cdot$ generated by $\mathfrak{u}$ is defined as follows. First, we put $m_{s_0}\triangleq \mu_*$. Then, for $t\in [s_i,s_{i+1})$, we put 
\begin{enumerate}[label=(M\arabic*)]
	\item\label{feedback:cond:control} $\xi(\cdot|t)\triangleq \delta_{\mathfrak{u}[s_i,m_{s_i}]}(\cdot)$;
	\item\label{feedback:cond:motion} $m_t\triangleq m_t[s_i,m_{s_i},\xi^i]$.
\end{enumerate} We call a pair $(m_\cdot,\xi)$ defined by rules~\ref{feedback:cond:control},~\ref{feedback:cond:motion} a control process generated by the strategy $\mathfrak{u}$ and initial position $(s_*,\mu_*)$. We denote it by $\mathcal{X}[s_*,\mu_*,\mathfrak{u},\Delta]$. 

Notice that if $(m_\cdot,\xi)=\mathcal{X}[s_*,\mu_*,\mathfrak{u},\Delta]$, then 
\[m_\cdot=m_\cdot[s_*,\mu_*,\xi].\] Thus, 
\[\begin{split}
\mathfrak{J}[s_*,\mu_*,\mathfrak{u},\Delta]\triangleq J[s_*,\mu_*,\xi]&= G(m_T)+\int_{[s_*,T]\times U}L(t,m_t,u)\xi(d(t,u))\\&=G(m_T)+\sum_{i=0}^{n-1}\int_{s_i}^{s_{i+1}}L(t,m_t,\mathfrak{u}[s_i,m_{s_i}])dt.\end{split}\]

Now, we define the feedback strategy $\letterEkapeps{\mathfrak{u}}$. This strategy realizes the concept of proximal aiming and is an analog of one proposed in \cite{Clarke1999}. First, we fix a bounded subset $D\subset\prd$ and a function $\varphi\in \operatorname{UC}([0,T]\times\prd)$. Given $(s,\mu)\in [0,T]\times D$, we choose $\pairkapeps{s}{\mu}$ satisfying~\eqref{nonsmooth:intro_ineq:s_mu_t_nu_alpha_eps}. Further, let $\letterkapeps{\pi}{s}{\mu}$ be an optimal plan between $\mu$ and $\letterkapeps{t}{s}{\mu}$. Now,
\[\letterkapeps{a}{s}{\mu}\triangleq s-\letterkapeps{t}{s}{\mu},\ \ \letterkapeps{\gamma}{s}{\mu}\triangleq (\operatorname{p}^2,\varkappa^{-2}(\operatorname{p}^1-\operatorname{p}^2))\sharp\letterkapeps{\pi}{s}{\mu}.\]
 Due to Lemma~\ref{nonsmooth:prop:diff_prox_directional}, $(\letterkapeps{a}{s}{\mu},\mathscr{b}[\letterkapeps{\gamma}{s}{\mu}])\in \partial_{P,\varepsilon}^-\varphi(s,\mu)$ when $(s,\mu)\in (0,T)\times D$ while $\varkappa$ and $\varepsilon$ are sufficiently small.
 
Furthermore, let, for each $(s,\mu)$, $\letterkapeps{\mathfrak{u}}{s}{\mu}$ satisfy
\begin{equation}\label{feedback:intro:u}
\begin{split}
	\letterkapeps{\mathfrak{u}}{s}{\mu}\in\underset{u\in U}{\operatorname{Argmax}}\Bigg[\int_{\rd}\mathscr{b}[\letterkapeps{\gamma}{s}{\mu}](x)f(\letterkapeps{t}{s}{\mu},x&,\letterkapeps{\nu}{s}{\mu},u)\mu(dx)\\&+L(\letterkapeps{t}{s}{\mu},\letterkapeps{\nu}{s}{\mu},u)\Bigg].\end{split}
\end{equation}

Now let $D_0\subset \prd$. By Proposition~\ref{app:prop:bounds}, there exists a domain $D$ such that each motion starting at $ D_0$ does not leave the set 
\begin{equation}\label{feedback:intro:D}
	D\triangleq \Bigg\{\nu:\, \varsigma(\nu)\leq c_1(1+\sup_{\mu\in D_0}\varsigma(\mu))\Bigg\}.
\end{equation} As in Section~\ref{sect:nonsmooth}, we put \begin{equation}\label{feedback:intro:D_1}D^{(1)}\triangleq \big\{\mu\in\prd:\, \operatorname{dist}(\mu,D)\leq 1\big\}.\end{equation}
In what follows, given a precision parameter $\eta>0$, we will consider a set 
$\mathscr{S}(\eta)$ consisting of 4-tuples $(\varkappa,\varepsilon,\alpha_*,\alpha^*)$ satisfying 
$\alpha_*\leq\alpha^*$. The first two parameters, $\varkappa$ and $\varepsilon$, are used in the definition of the strategy $\letterEkapeps{\mathfrak{u}}$, while $\alpha_*$ and $\alpha^*$
determine the lower and upper bounds of an interval between control corrections. Each such set is referred to as a parameter complex.
\begin{theorem}\label{feedback:th:u_kapeps}
	Assume that $D_0$ is a bounded domain, $D$ and $D^{(1)}$ are defined by~\eqref{feedback:intro:D},\eqref{feedback:intro:D_1}, while $\varphi\in \operatorname{UC}([0,T]\times D^{(1)})$ is such that 
	\begin{itemize}
		\item $\varphi(T,\mu)\geq G(\mu)$;
		\item there exist constants $C>0$ and $\varepsilon_0>0$ satisfying the following condition: for every $(s,\mu)\in [0,T]\times D^{(1)}$, $\varepsilon\in (0,\varepsilon_0)$, $(a,\gamma)\in \partial_{P,\varepsilon}^-\varphi(s,\mu)$ with $\mathscr{b}[\gamma]\in\operatorname{dis}^-(\mu)$, 
		\[a+H(s,\mu,\mathscr{b}[\gamma])\leq C\varepsilon.\]
	\end{itemize} Then, given a precision $\eta>0$, one can find a parameter complex $\mathscr{S}(\eta)$ such that, for every $(s_*,\mu_*)\in [0,T]\times D_0$, $(\varkappa,\varepsilon,\alpha_*,\alpha^*)\in\mathscr{S}(\eta)$ and a partition $\Delta=\{s_i\}_{i=0}^n$ satisfying \[\alpha_*\leq s_{i+1}-s_i\leq \alpha^*\] one has that
	\begin{equation}\label{feedback:ineq:J_u}
		\mathfrak{J}[s_*,\mu_*,\letterEkapeps{\mathfrak{u}},\Delta]\leq \varphi(s_*,\mu_*)+\eta.
	\end{equation} 
\end{theorem}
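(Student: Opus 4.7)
The strategy is to walk along the partition $\Delta$ using the Moreau--Yosida regularisation $\varphi_\varkappa$ as a surrogate Lyapunov function and to establish a per-step estimate which, after telescoping, yields~\eqref{feedback:ineq:J_u}. Let $(m_\cdot,\xi)=\mathcal{X}[s_*,\mu_*,\letterEkapeps{\mathfrak{u}},\Delta]$; by Proposition~\ref{app:prop:bounds} and the definition of $D$ the trajectory $m_\cdot$ stays inside $D$, so every construction is admissible. At each node $s_i$ I set $\mu^i\triangleq m_{s_i}$, let $(\hat t_i,\hat\nu_i)\triangleq\pairkapeps{s_i}{\mu^i}$, let $\hat\pi_i$ be an optimal plan between $\mu^i$ and $\hat\nu_i$, and let $(\hat a_i,\hat\gamma_i)$ be the pair from~\eqref{nonsmooth:intro:a_diff},~\eqref{nonsmooth:intro:alpha}. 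Lemma~\ref{nonsmooth:lm:alpha_prox} places $(\hat a_i,\hat\gamma_i)\in\partial_{P,\varepsilon}^-\varphi(\hat t_i,\hat\nu_i)$ with $\mathscr{b}[\hat\gamma_i]$ a scaled transposed optimal displacement and hence in the admissible cone at $\hat\nu_i$; the proximal supersolution hypothesis therefore applies at $(\hat t_i,\hat\nu_i)$, and combined with the extremising choice $u_i\triangleq\letterEkapeps{\mathfrak{u}}[s_i,\mu^i]$ from~\eqref{feedback:intro:u} it yields
\[\hat a_i+\int_{\rd}\mathscr{b}[\hat\gamma_i](z)f(\hat t_i,z,\hat\nu_i,u_i)\hat\nu_i(dz)+L(\hat t_i,\hat\nu_i,u_i)\leq C\varepsilon.\]

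For the one-step comparison I would invoke Lemma~\ref{nonsmooth:lm:shift} with $s=s_i$, $s'=s_{i+1}$, $\mu=\mu^i$, $\mu'=\mu^{i+1}$, using the flow-induced plan $\pi_i\triangleq(\operatorname{Id},X^{s_i,s_{i+1}}_{m_\cdot,\xi})\sharp\mu^i$ between $\mu^i$ and $\mu^{i+1}$ and a gluing $\varpi_i\in\pW{(\rd)^3}$ with $\operatorname{p}^{1,2}\sharp\varpi_i=\hat\pi_i$ and $\operatorname{p}^{1,3}\sharp\varpi_i=\pi_i$. The lemma gives
\[\varphi_\varkappa(s_{i+1},\mu^{i+1})-\varphi_\varkappa(s_i,\mu^i)\leq\hat a_i(s_{i+1}-s_i)+\int\varkappa^{-2}(x-z)^{\top}(x'-x)\varpi_i(d(x,z,x'))+R_i,\]
where $R_i$ collects the quadratic remainder of Lemma~\ref{nonsmooth:lm:shift} and the term $\varepsilon\varrho_{2}(\varkappa)$. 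Expanding $x'-x$ along the flow as $(s_{i+1}-s_i)f(s_i,x,\mu^i,u_i)$ up to a Lipschitz error in $(\alpha^*,\varrho_{1}(\varkappa))$, applying~\eqref{control:ineq:sublinear} together with Corollary~\ref{nonsmooth:corollary:bound_2} to swap the arguments $(s_i,\mu^i,x)\mapsto(\hat t_i,\hat\nu_i,z)$, and integrating $\varkappa^{-2}(x-z)$ against the disintegration $\hat\pi_i(dx|z)$ to recover $\mathscr{b}[\hat\gamma_i](z)$, the cross integral becomes $(s_{i+1}-s_i)\int\mathscr{b}[\hat\gamma_i](z)f(\hat t_i,z,\hat\nu_i,u_i)\hat\nu_i(dz)$ plus a term bounded by $(s_{i+1}-s_i)\omega_\#(\varrho_{1}(\varkappa)+\alpha^*)$ for a modulus $\omega_\#$ depending on $f,L$ and the radii of $D^{(1)}$. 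A parallel estimate rewrites $\int_{s_i}^{s_{i+1}}L(t,m_t,u_i)dt$ as $(s_{i+1}-s_i)L(\hat t_i,\hat\nu_i,u_i)$ with an analogous error. Substituting the supersolution inequality yields the key per-step bound
\[\varphi_\varkappa(s_{i+1},\mu^{i+1})-\varphi_\varkappa(s_i,\mu^i)+\int_{s_i}^{s_{i+1}}L(t,m_t,u_i)\,dt\leq(s_{i+1}-s_i)\bigl(C\varepsilon+\tilde\omega(\varkappa,\alpha^*)\bigr)+\varepsilon\varrho_{2}(\varkappa),\]
with $\tilde\omega(\varkappa,\alpha^*)\to 0$ as $\varkappa,\alpha^*\to 0$.

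Summing over $i=0,\dots,n-1$ telescopes the $\varphi_\varkappa$ increments; Lemma~\ref{nonsmooth:lm:varphi_varphi_varkappa} converts the endpoints back to $\varphi$, and the assumption $\varphi(T,\cdot)\geq G$ bounds $G(m_T)$ from above by $\varphi_\varkappa(T,m_T)+\varrho_{3}(\varkappa)$, giving
\[\mathfrak{J}[s_*,\mu_*,\letterEkapeps{\mathfrak{u}},\Delta]\leq\varphi(s_*,\mu_*)+2\varrho_{3}(\varkappa)+T\bigl(C\varepsilon+\tilde\omega(\varkappa,\alpha^*)\bigr)+n\varepsilon\varrho_{2}(\varkappa).\]
Since $n\leq(T-s_*)/\alpha_*$, the parameter complex $\mathscr{S}(\eta)$ is specified by first shrinking $\varkappa$ and $\alpha^*$ so that $2\varrho_{3}(\varkappa)\leq\eta/4$ and $T\tilde\omega(\varkappa,\alpha^*)\leq\eta/4$, next fixing any $\alpha_*\in(0,\alpha^*]$, and finally choosing $\varepsilon>0$ small enough that $TC\varepsilon+T\varepsilon\varrho_{2}(\varkappa)/\alpha_*\leq\eta/2$, which yields~\eqref{feedback:ineq:J_u}.

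The principal obstacle is the per-step conversion of the raw displacement integral $\int\varkappa^{-2}(x-z)^{\top}(x'-x)d\varpi_i$ produced by Lemma~\ref{nonsmooth:lm:shift} into the Hamiltonian pairing $\int\mathscr{b}[\hat\gamma_i]f\,d\hat\nu_i$ demanded by the supersolution hypothesis: this rests on a quantitative short-time expansion of the flow $X^{s_i,\cdot}_{m_\cdot,\xi}$, on the disintegration identity linking $\varkappa^{-2}(x-z)$ with $\mathscr{b}[\hat\gamma_i]$, and on the uniform-continuity moduli of $f$ and $L$, all measured uniformly over $D^{(1)}$. A secondary subtlety is the ordering of the four parameters, since the telescoping residue $n\varepsilon\varrho_{2}(\varkappa)\sim\varepsilon\varrho_{2}(\varkappa)/\alpha_*$ forces $\varepsilon$ to be fixed after both $\varkappa$ and $\alpha_*$.
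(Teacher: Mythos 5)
Your proposal follows the paper's strategy closely: at each node build the Ekeland pair and the associated proximal subgradient, invoke the supersolution hypothesis coupled with the aiming choice of control, use Lemma~\ref{nonsmooth:lm:shift} for the one-step decrease of the Moreau--Yosida regularization, telescope, and choose parameters in the order $\varkappa,\alpha^*,\alpha_*,\varepsilon$ with $\varepsilon$ last so that the residual $n\varepsilon\varrho_2(\varkappa)\lesssim\varepsilon\varrho_2(\varkappa)/\alpha_*$ is controlled. This is structurally the paper's argument.

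There is one genuine omission. You sum the per-step inequality over all $i=0,\dots,n-1$, but the proximal inclusion $(\hat a_i,\hat\gamma_i)\in\partial^-_{P,\varepsilon}\varphi(\hat t_i,\hat\nu_i)$ on which each step rests comes from Lemma~\ref{nonsmooth:lm:alpha_prox}, which is only available under the interiority condition $\varrho_1(\varkappa)<1\wedge s_i\wedge(T-s_i)$, i.e.\ $\hat t_i\in(0,T)$ and $\hat\nu_i\in\operatorname{int}(D^{(1)})$. For nodes $s_i$ within $\varrho_1(\varkappa)$ of $0$ or of $T$ this fails and the supersolution hypothesis cannot be applied through your chain. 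The paper resolves this by choosing cut-off indices $k$ (the first index with $s_{k-1}>\varrho_1(\varkappa)$ failing) and $l$ (the last index with $T-s_l>\varrho_1(\varkappa)$) and summing the per-step inequality only over $i\in\{k,\dots,l-1\}$; the two boundary slabs $[s_0,s_k]$ and $[s_l,T]$ are estimated separately using the uniform continuity of $\varphi$, Lemma~\ref{nonsmooth:lm:varphi_varphi_varkappa}, and the boundedness of $L$ on $D$, contributing error terms like $\varrho_4'(\varkappa)+C_3\varrho_1(\varkappa)$ and $\varrho_5'(\varkappa,\alpha^*)+C_3(\varrho_1(\varkappa)+\alpha^*)$. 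Your proof needs this truncation and these boundary estimates; without them the per-step claim simply does not hold at the initial and terminal nodes. Everything else — the conversion of the cross-integral $\varkappa^{-2}\int(x-z)^\top(x'-x)\,d\varpi_i$ into the Hamiltonian pairing via the flow expansion, the disintegration identity for $\mathscr{b}[\hat\gamma_i]$, and the moduli-of-continuity bookkeeping — is exactly what the paper does and your description of it is accurate.
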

\begin{proof} First, we fix a precision $\eta>0$ and a 4-tuple $(\varkappa,\varepsilon,\alpha_*,\alpha^*)$ such that $\varrho_{1}(\varkappa)<1$, $\alpha_*\leq\alpha^*$. Let $\Delta$ be a partition of $[s_*,T]$ satisfying
	\[\alpha_*\leq \min_{i\in \{0,\ldots,n-1\}}(s_{i+1}-s_i),\ \ \alpha^*\geq \max_{i\in \{0,\ldots,n-1\}}(s_{i+1}-s_i). \]
	Furthermore, for $(m_\cdot,\xi)=\mathcal{X}[s_*,\mu_*,\mathfrak{u},\Delta]$, we denote
		\begin{itemize}
		\item $\mu_i\triangleq m_{s_i}$;
		\item $t_i\triangleq \letterkapeps{t}{s_i}{\mu_i}$;
		\item $\nu_i\triangleq \letterkapeps{\nu}{s_i}{\mu_i}$;
		\item $u_i\triangleq \letterEkapeps{\mathfrak{u}}[s_i,\mu_i]$;
		\item $\pi_i\triangleq \letterkapeps{\pi}{s_i}{\mu_i}$;
		\item $a_i\triangleq \letterkapeps{a}{s_i}{\mu_i}=(s_i-\letterkapeps{t}{s_i}{\mu_i})$;
		\item $\gamma_i\triangleq \letterkapeps{\gamma}{s_i}{\mu_i}$.
	\end{itemize}
	We will also use the symbol $\omega(\cdot)$ for the modulus of continuity of the function $\varphi$ on $D^{(1)}$ (see~\eqref{nonsmooth:intro:modulus} for details). 
	
	Let $k$ be the maximal number such that $s_{k-1}\leq \varrho_{1}(\varkappa)$. Furthermore, we choose $l$ to be a minimal number satisfying $T-s_l\leq \varrho_{1}(\varkappa)$. Thus, from Corollary~\ref{nonsmooth:corollary:bound_2},
	 we have that $(t_i,\nu_i)\in (0,T)\times \operatorname{int}(D^{(1)})$ for every $i\in \{k,\ldots,l-1\}$.
	 
	 Furthermore, one can define
	\[ \varrho_{4}'(\kappa)\triangleq \varrho_{3}(\kappa)+ \omega(C_2\varrho_{1}(\kappa))\] where 
	\[C_2=1\vee \bigg[c_2\Big(1+\sup_{\mu\in D_0}\varsigma(\mu)\Big)\bigg],\] while
	$c_2$ is defined in Proposition~\ref{app:prop:bounds}. Notice that due to Proposition~\ref{app:prop:bounds},
	\[W_2(m_T,\mu_l)\leq C_2\cdot (T-s_l).\] 
	Hence, we use Lemma~\ref{nonsmooth:lm:varphi_varphi_varkappa}, and obtain
	\begin{equation}\label{feedback:ineq:varphi_kappa_s_l_T}
		|\varphi_\varkappa(s_l,\mu_l)- \varphi(T,m_T)|\leq \varrho_{4}'(\varkappa).
	\end{equation} 
	Due to Proposition~\ref{app:prop:bounds}, we have that 
	\[C_3\triangleq \sup\{|L(t,\mu,u)|:\  t\in [0,T],\,\mu\in D,\, u\in U\}<\infty.\] Therefore,
	\begin{equation}\label{feedback:ineq:L_s_l_T}
		\Bigg|\int_{s_l}^T\int_U L(t,m_t,\xi)\xi(du|t)dt\Bigg|\leq C_3\varrho_{1}(\varkappa).
	\end{equation} 
	
	Furthermore, we put 
	\begin{equation}\label{feedback:intro:varrho_5}\varrho_{5}'(\kappa,\alpha)\triangleq \omega(C_2\varrho_{1}(\kappa)+C_2\alpha).\end{equation}
	 Using Lemma~\ref{nonsmooth:lm:varphi_varphi_varkappa}, we arrive at the estimate
	\begin{equation}\label{feedback:ineq:varphi_kappa_s_k_0}
		|\varphi(s_0,\mu_0)- \varphi(s_k,\mu_k)|\leq \varrho_{5}'(\varkappa,\alpha^*).
	\end{equation} 
	Similarly to \eqref{feedback:ineq:L_s_l_T}, we have that	\begin{equation}\label{feedback:ineq:L_s_0_s_k}
		\Bigg|\int_{s_0}^{s_k}\int_U L(t,m_t,\xi)\xi(du|t)dt\Bigg|\leq C_3(\varrho_{1}(\varkappa)+\alpha^*).
	\end{equation}

	Now, let $i\in \{k,\ldots,l-1\}$. In this case, one can introduce the function
	\[v_i(x)\triangleq \frac{1}{s_{i+1}-s_i}\int_{s_{i}}^{s_{+1}}f(t,X_{m_\cdot}^{s_i,t}(x),m_t,u_i)dt.\] 
	
	 Notice that 
	\[\mu_{i+1}=(\operatorname{Id}+(s_{i+1}-s_i)v_i)\sharp \mu_i\]
	 From Lemma~\ref{nonsmooth:lm:shift}, we have that 
	 \begin{equation}\label{feedback:ineq:varphi_i_first}
	 	 \begin{split}
	 		\varphi_\varkappa(s_{i+1}&,\mu_{i+1})\leq \varphi_\varkappa(s_i,\mu_i)\\&+ a_i(s_{i+1}-s_i)+\frac{1}{\varkappa^2}\int_{\rd\times\rd}(x-y)(s_{i+1}-s_i)v_i(x)\pi_i(d(x,y))\\&+\frac{C_4}{2\varkappa^2}(s_{i+1}-s_i)^2+\varepsilon\varrho_{2}(\varkappa).
	 	\end{split}
	 \end{equation} Here $C_4$ is a constant certainly dependent on $D$ and, thus, on $D_0$. One can use conditions~\ref{control:cond:f_omega} and~\ref{control:cond:f_Lip} and Proposition~\ref{app:prop:bounds}. Thus,
	 \[
	 \begin{split}
	 	|f(t,X_{m_\cdot}^{s_i,t}(&x),m_t,u_i)-f(t_i,y,\mu_i,u_i)|\\\leq &C_5\omega_f(t-s_i)+C_5\omega_f(|s_i-t_i|)+C_6(t-s_i)+C_f|x-y|+C_fW_2(\mu_i,\nu_i).
	 \end{split}
	 \] Here, $C_5$ and $C_6$ are constants determined by $D_0$, while $C_f$ is the Lipschitz constant for the function $f$.
	 Hence, 
	 \[\begin{split}
	 \Bigg|\int_{\rd\times\rd}(x-&y)v_i(x)\pi_i(d(x,y))-\int_{\rd\times\rds}
	 (x-y^*)f(t_i,x,\nu_i,u_i)\pi_i(d(x,y^*))
	 \Bigg|\\\leq &\big[C_5\omega_f(t-s_i)+C_5\omega_f(|s_i-t_i|)+C_6(t-s_i)+2C_fW_2(\mu_i,\nu_i)\big]W_2(\mu_i,\nu_i).
	 \end{split}\]
	 
	From this estimate, inequality~\eqref{feedback:ineq:varphi_i_first} and Corollary~\ref{nonsmooth:corollary:bound_2}, we obtain
		 \begin{equation}\label{feedback:ineq:varphi_i_second}
		\begin{split}
			\varphi_\varkappa(s_{i+1}&,\mu_{i+1})\leq \varphi_\varkappa(s_i,\mu_i)\\&+ a_i(s_{i+1}-s_i)+(s_{i+1}-s_i)\int_{\rd}\mathscr{b}[\gamma_i](y)f(t_i,x,\nu_i,u_i)\mu_i(dy)\\&+(s_{i+1}-s_i)\varrho_{6}'(\varkappa,s_{i+1}-s_i)+(s_{i+1}-s_i)\varrho_{7}'(\varkappa)+\varepsilon\varrho_{2}(\varkappa).
		\end{split}
	\end{equation} Here we denote
	\[\varrho_{6}'(\kappa,\alpha)\triangleq \frac{C_4}{2\varkappa^2}\alpha+\Big[C_5\omega_f(\alpha)+C_6\alpha\Big]\varrho_{1}(\kappa),\]
	 \[\varrho_{7}'(\kappa)\triangleq \big[C_5\omega_f(\varrho_{1}(\kappa))+2C_f\varrho_{1}(\kappa)\big]\varrho_{1}(\kappa).\] Notice that 
	 $\varrho_{6}'(\kappa,\alpha)\rightarrow 0$ as $\alpha\rightarrow 0$, whereas $\varrho_{7}'(\kappa)\rightarrow 0$ as $\kappa\rightarrow 0$.
	 
	 Now let us consider the running cost. Due to condition~\ref{control:cond:L}, we have that, for $t\in [s_i,s_{i+1}]$,
	 \[
	 \begin{split}
	 |L(t,&m_t,u_i)-L(t_i,\nu_i,u_i)|\\&\leq \omega_{L}(t-s_i)+\omega_{L}(|s_i-t_i|)+C_LW_2(m_t,\mu_i)+C_LW_2(\mu_i,\nu_i).	
	 \end{split}
	 \] Here $C_L$ is a Lipschitz constant of the function $L$ w.r.t.\ the measure variable, while
	 \[\begin{split}
	 \omega_{L}(\alpha)\triangleq \sup\big\{|L(t'&,\nu,u)-L(t'',\nu,u)|:\\ &t',t''\in [0,T],\,|t'-t''|\leq \alpha,\, \nu\in D^{(1)},\, u\in U\big\}.\end{split}\] From this and~\eqref{feedback:ineq:varphi_i_second}, we conclude that 
	 \begin{equation}\label{feedback:ineq:varphi_i_third}
	 	\begin{split}
	 		\varphi_\varkappa(s_{i+1}&,\mu_{i+1})+\int_{s_i}^{s_{i+1}}L(t,m_t,u_i)dt\leq \varphi_\varkappa(s_i,\mu_i)\\&+ a_i(s_{i+1}-s_i)+(s_{i+1}-s_i)\int_{\rd}\mathscr{b}[\gamma_i](y)f(t_i,x,\nu_i,u_i)\mu_i(dy)\\&+(s_{i+1}-s_i)L(t_i,\mu_i,u_i)\\&+(s_{i+1}-s_i)\varrho_{8}'(\varkappa,s_{i+1}-s_i)+(s_{i+1}-s_i)\varrho_{9}'(\varkappa)+\varepsilon\varrho_{2}(\varkappa).
	 	\end{split}
	 \end{equation} Here, we used Proposition~\ref{app:prop:bounds} and denoted
	 	\[\varrho_{8}'(\kappa,\alpha)\triangleq \varrho_{6}'(\kappa,\alpha)+\omega_{L}(\alpha)+C_Lc_2\bigg(1+\sup_{\mu\in D_0}\varsigma(\mu)\bigg)\alpha,\]
	 \[\varrho_{9}'(\kappa)\triangleq \varrho_{7}'(\kappa)+\omega_{L}(\varrho_{1}(\kappa))+C_L\varrho_{1}(\kappa).\] As above 
	 $\varrho_{8}'(\kappa,\alpha)\rightarrow 0$ as $\alpha\rightarrow 0$. Simultaneously, $\varrho_{9}'(\kappa)\rightarrow 0$ as $\kappa\rightarrow 0$. 
	 
	 Furthermore, 
	 \[
	 	a_i+\int_{\rd}\mathscr{b}[\gamma_i](y)f(t_i,x,\nu_i,u_i)\mu_i(dy)+L(t_i,\mu_i,u_i)= a_i+H(t_i,\mu_i,\mathscr{b}[\gamma_i]).
	 \] Since $(a_i,\mathscr{b}[\gamma_i])\in \partial_{D,\varepsilon}^-\varphi(t_i,\mu_i)$, while $\mathscr{b}[\gamma_i]\in\operatorname{dis}^-(\mu_i)$, the assumption of the theorem gives that 
	 \[
	 a_i+\int_{\rd}\mathscr{b}[\gamma_i](y)f(t_i,x,\nu_i,u_i)\mu_i(dy)+L(t_i,\mu_i,u_i)\leq C\varepsilon.
	 \]
	 
	 Thus, summing inequalities~\eqref{feedback:ineq:varphi_i_third} for $i=k,\ldots,l-1$, we obtain
	 \[
	 \begin{split}
	 	\varphi_\varkappa(s_{l},\mu_{l})&+\sum_{i=k}^{l-1}\int_{s_i}^{s_{i+1}}L(t,m_t,u_i)dt\\\leq &\varphi_\varkappa(s_k,\mu_k)+TC\varepsilon+T\varrho_{8}'(\varkappa,\alpha^*)+T\varrho_{9}'(\varkappa)+n\varepsilon\varrho_{2}(\varkappa).
	 \end{split}
	 \]
	 Now we take into account inequalities~\eqref{feedback:ineq:varphi_kappa_s_l_T},~\eqref{feedback:ineq:L_s_l_T},\eqref{feedback:ineq:varphi_kappa_s_k_0},~\eqref{feedback:ineq:L_s_0_s_k}. It gives that 
	 \[
	 \begin{split}
	 	\varphi(T,m_T)&+\sum_{i=0}^{n-1}\int_{s_i}^{s_{i+1}}L(t,m_t,u_i)dt\\\leq &\varphi(s_0,\mu_0)+TC\varepsilon+T\varrho_{8}'(\varkappa,\alpha^*)+T\varrho_{9}'(\varkappa)+\frac{T}{\alpha_*}\varepsilon\varrho_{2}(\varkappa)\\&+\varrho_{4}' (\varkappa)+C_3\varrho_{1}(\varkappa)+\varrho_{5}'(\varkappa,\alpha^*)+C_3(\varrho_{1}(\varkappa)+\alpha^*).
	 \end{split}
	 \] Now recall that $\xi(t)=u_i$ whenever $t\in [s_i,s_{i+1})$. Therefore,
	 \begin{equation}\label{feedback:ineq:J_bound}
	 	\mathfrak{J}[s_*,\mu_*,\letterEkapeps{\mathfrak{u}},\Delta]\leq \varphi(s_*,\mu_*)+TC\varepsilon+\frac{T}{\alpha_*}\varepsilon\varrho_{2}(\varkappa)+\varrho_{10}'(\varkappa)+\varrho_{11}'(\varkappa,\alpha^*).
	 \end{equation} Here we denote
	 \[\varrho_{10}'(\kappa)\triangleq T\varrho_{9}'(\kappa)+\varrho_{4}' (\kappa)+C_3\varrho_{1}(\kappa)+C_3\varrho_{1}(\kappa)+\omega(C_2\varrho_{1}(\kappa)),\]
	 \[\varrho_{11}'(\kappa,\alpha)\triangleq T\varrho_{8}'(\kappa,\alpha)+\varrho_{5}'(\kappa,\alpha)-\omega(C_2\varrho_{1}(\kappa))+C_3\alpha.\] Notice that the functions $\varrho_{2}$, $\varrho_{10}'$ are monotonically increasing and $\varrho_{2}(\kappa),\varrho_{10}'(\kappa)\rightarrow 0$ as $\kappa\rightarrow 0$. At the same time, we recall the convergence $\varrho_{8}'(\kappa,\alpha)\rightarrow 0$ as $\alpha\rightarrow 0$. Moreover, due to the definition of $\varrho_{5}'$ (see~\eqref{feedback:intro:varrho_5}), for each $\kappa$, $\varrho_{5}(\kappa,\alpha)\rightarrow \omega(C_2\varrho_{1}(\kappa))$ as $\alpha\rightarrow 0$. This gives that $\varrho_{11}'(\kappa,\alpha)\rightarrow 0$ as $\alpha\rightarrow 0$. Additionally, the mapping $\alpha\mapsto \varrho_{11}'(\kappa,\alpha)$ is also monotonically increasing.
	 
	 Now, we find conditions $(\varkappa,\varepsilon,\alpha_*,\alpha^*)$ ensuring inequality~\eqref{feedback:ineq:J_u} and, thus, define the parameter complex $\mathscr{S}(\eta)$.
	 To this end, we let \begin{enumerate}
	 	\item 
	 	$\hat{\varkappa}(\eta)$ be such that $\varrho_{10}'(\varkappa)\leq \eta/3$ and $\varrho_{1}(\varkappa)<1$ for every $\varkappa\in (0,\hat{\varkappa}(\eta))$;
	 	\item $\alpha^0(\eta,\varkappa)$ be such that 
	 	$\varrho_{11}'(\varkappa,\alpha^*)\leq \eta/3$ for each $\alpha^*\leq \alpha^0(\eta,\varkappa)$;
	 	\item $\hat{\varepsilon}(\eta,\varkappa,\alpha^*)\in (0,\varepsilon_0)$ satisfy $TC\varepsilon+T\varepsilon^{1/2}\varrho_{2}(\varkappa)\leq \eta/3$ and $\varepsilon\leq\alpha^*$ whenever $\varepsilon\leq \hat{\varepsilon}(\eta,\varkappa,\alpha^*)$.
	 \end{enumerate} Combining this, we put
	 \[\begin{split}\mathscr{S}(\eta)\triangleq \Big\{(\varkappa,\varepsilon,\sqrt{\varepsilon},\alpha^*):\ \varkappa\in (0,\hat{\varkappa}(\eta)),\ \alpha^*\in (0,&\alpha^0(\varkappa,\eta)), \varepsilon\in (0,\hat{\varepsilon}(\eta,\varkappa,\alpha^*))\Big\}.\end{split} \] In particular, for every $(\varkappa,\varepsilon,\alpha_*,\alpha^*)\in\mathscr{S}(\eta)$, $\alpha_*\leq \alpha^*$. The very definition of the complex $\mathscr{S}(\eta)$ and estimate~\eqref{feedback:ineq:J_bound} give inequality~\eqref{feedback:ineq:J_u} for every $(\varkappa,\varepsilon,\alpha_*,\alpha^*)\in\mathscr{S}(\eta)$.
\end{proof} 

\begin{corollary}\label{feedback:corollary:bound} If $\varphi\in \operatorname{UC}([0,T]\times\prd)$ is a viscosity supersolution of~\eqref{Bellman:eq:HJ} satisfying $\varphi(T,\mu)\geq G(\mu)$, then, for every $(s_*,\mu_*)\in [0,T]\times\prd$, one has that 
	\[\operatorname{Val}(s_*,\mu_*)\leq \varphi(s_*,\mu_*).\]
\end{corollary}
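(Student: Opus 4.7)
The plan is to derive the corollary as a direct consequence of Theorem~\ref{feedback:th:u_kapeps}. Fix $(s_*,\mu_*)\in [0,T]\times\prd$ and take $D_0 \triangleq \{\mu_*\}$, which is trivially bounded. Define the corresponding $D$ and $D^{(1)}$ by~\eqref{feedback:intro:D},~\eqref{feedback:intro:D_1}. The task reduces to verifying that the hypothesis of Theorem~\ref{feedback:th:u_kapeps} is satisfied for $\varphi$ on $D^{(1)}$, after which the conclusion follows by letting the precision parameter tend to zero.

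To verify the hypothesis, let $(s,\mu)\in [0,T]\times D^{(1)}$, $\varepsilon>0$, and $(a,\gamma)\in \partial_{P,\varepsilon}^-\varphi(s,\mu)$ with $\mathscr{b}[\gamma]\in\operatorname{dis}^-(\mu)$. By Proposition~\ref{nonsmooth:prop:diff_prox_directional}, every proximal $\varepsilon$-subgradient induces a directional one, so
\[(a,\mathscr{b}[\gamma])\in\partial_{D,\varepsilon}^-\varphi(s,\mu).\]
Since $\varphi$ is a (strict) viscosity supersolution of~\eqref{Bellman:eq:HJ}, applied to the bounded set $D^{(1)}$, there is a constant $C\triangleq C(D^{(1)})\geq 0$, independent of $\varepsilon$ and $(s,\mu)$, such that
\[a+H(s,\mu,\mathscr{b}[\gamma])\leq C\varepsilon.\]
This is exactly the assumption required by Theorem~\ref{feedback:th:u_kapeps}, with $\varepsilon_0$ any positive number (e.g.\ $\varepsilon_0=1$).

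Now fix an arbitrary $\eta>0$. Theorem~\ref{feedback:th:u_kapeps} produces a parameter complex $\mathscr{S}(\eta)$ and, for each $(\varkappa,\varepsilon,\alpha_*,\alpha^*)\in\mathscr{S}(\eta)$ and every partition $\Delta$ with $\alpha_*\leq s_{i+1}-s_i\leq \alpha^*$, the bound
\[\mathfrak{J}[s_*,\mu_*,\letterEkapeps{\mathfrak{u}},\Delta]\leq \varphi(s_*,\mu_*)+\eta.\]
On the other hand, the pair $(m_\cdot,\xi)=\mathcal{X}[s_*,\mu_*,\letterEkapeps{\mathfrak{u}},\Delta]$ yields a relaxed control $\xi\in\mathcal{U}_{s_*,T}$ with $\mathfrak{J}[s_*,\mu_*,\letterEkapeps{\mathfrak{u}},\Delta]=J[s_*,\mu_*,\xi]$, and thus
\[\operatorname{Val}(s_*,\mu_*)\leq J[s_*,\mu_*,\xi]=\mathfrak{J}[s_*,\mu_*,\letterEkapeps{\mathfrak{u}},\Delta]\leq \varphi(s_*,\mu_*)+\eta.\]
Since $\eta>0$ is arbitrary, the conclusion follows by taking $\eta\downarrow 0$. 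There is no real obstacle here beyond checking compatibility with the hypothesis of Theorem~\ref{feedback:th:u_kapeps}; all the heavy lifting (Moreau--Yosida regularization, proximal aiming, error estimates along the discretized trajectory) has already been absorbed into that theorem.
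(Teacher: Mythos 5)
Your proof is correct and follows essentially the same route as the paper's: invoke Theorem~\ref{feedback:th:u_kapeps} to obtain a control achieving cost at most $\varphi(s_*,\mu_*)+\eta$, compare with the value function, and let $\eta\downarrow 0$. You merely spell out the hypothesis verification (passing from the strict viscosity supersolution inequality on directional subgradients to the proximal hypothesis of the theorem via Proposition~\ref{nonsmooth:prop:diff_prox_directional}), which the paper leaves implicit.
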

\begin{proof}
	Theorem~\ref{feedback:th:u_kapeps} implies that, for every $\eta>0$, there exists $\xi_\eta\in\mathcal{U}_{s_*,T}$ such that 
	\[J[s_*,\mu_*,\xi_\eta]\leq \varphi(s_*,\mu_*)+\eta.\] 
Thus,
\[\operatorname{Val}(s_*,\mu_*)\leq \varphi(s_*,\mu_*)+\eta.\] Letting $\eta\rightarrow 0$, the corollary follows.
\end{proof}

\section{Lower bound of the value function}\label{sect:lb}
In this section, we work with a function $\psi$ and its upper Moreau-Yosida regularization that is defined by the $\sup$-convolution.

If $\psi\in \operatorname{UC}([0,T]\times\prd)$, then we denote $\varphi\triangleq -\psi$. 
Recall that, for every $(s,\mu)\in [0,T]\times\prd$, $\varepsilon>0$, 
\[\partial_{D,\varepsilon}^+\psi(s,\mu)=-\partial_{D,\varepsilon}^-\varphi(s,\mu).\] Analogously, one can introduce the proximal $\varepsilon$-superdifferential. In this case, $(\hat{a},\hat{\gamma})\in \partial_{P,\varepsilon}^+\psi(s,\mu)$ if and only if
$(-\hat{a},(\operatorname{p}^1,-\operatorname{p}^2)\sharp\hat{\gamma})\in \partial_{P,\varepsilon}^-(-\psi)(s,\mu)$. As in the case of subdifferentials, if $(\hat{a},\hat{\gamma})\in \partial_{P,\varepsilon}^+\psi(s,\mu)$,
\begin{equation}\label{lb:incl:subdiff}
	(\hat{a},\mathscr{b}[\hat{\gamma}])\in\partial_{D,\varepsilon}^+\psi(s,\mu).
\end{equation}

Let $D$ be a bounded subset of $\prd$ and let $D^{(1)}$ be its closed $1$-neighborhood. One can introduce the upper Moreau-Yosida regularization by the rule:
\[\psi^\varkappa(s,\mu)\triangleq \sup\Bigg\{\psi(t,\nu)-\frac{1}{2\varkappa^2}\big[|t-s|^2+W_2^2(\mu,\nu)\big]:\ \ t\in [0,T],\, \nu\in D^{(1)}\Bigg\}.\]
Notice that
\[\begin{split}\psi^\varkappa(s,\mu)&=-\inf\Bigg\{\varphi(t,\nu)+\frac{1}{2\varkappa^2}\big[|t-s|^2+W_2^2(\mu,\nu)\big]:\ \ t\in [0,T],\, \nu\in D^{(1)}\Bigg\}
\\&=-\varphi_\varkappa(s,\mu).
\end{split}\] Keeping the designation of Section~\ref{subsect:nonsmooth:MY}, we denote a pair satisfying~\eqref{nonsmooth:intro_ineq:s_mu_t_nu_alpha_eps},~\eqref{nonsmooth:intro_ineq:t_nu_alpha_eps} by $\pairkapeps{s}{\mu}$. Additionally, we fix an optimal plan  between $\mu$ and $\letterkapeps{\nu}{s}{\mu}$ denoted by $\letterkapeps{\pi}{s}{\mu}$. As in Section~\ref{subsect:nonsmooth:MY},
\[\letterkapeps{a}{s}{\mu}\triangleq s-\letterkapeps{t}{s}{\mu},\ \ \letterkapeps{\gamma}{s}{\mu}\triangleq (\operatorname{p}^2,\varkappa^{-2}(\operatorname{p}^1-\operatorname{p}^2))\sharp\letterkapeps{\pi}{s}{\mu}.\] If one let
\[\letterkapeps{\hat a}{s}{\mu}\triangleq \letterkapeps{t}{s}{\mu}- s,\]
\[\letterkapeps{\hat\gamma}{s}{\mu}\triangleq (\operatorname{p}^2,\varkappa^{-2}(\operatorname{p}^2-\operatorname{p}^1))\sharp\letterkapeps{\pi}{s}{\mu},\] then
\[\letterkapeps{\hat a}{s}{\mu}=-\letterkapeps{a}{s}{\mu},\,\letterkapeps{\hat{\gamma}}{s}{\mu}= (\operatorname{p}^1,-\operatorname{p}^2)\sharp\letterkapeps{\gamma}{s}{\mu},\] and, thus,
\begin{equation}\label{lb:incl:super_diff_psi}
	(\letterkapeps{\hat{a}}{s}{\mu},\letterkapeps{\hat{\gamma}}{s}{\mu})\in \partial_{P,\varepsilon}^+\psi(s,\mu).
\end{equation}

Furthermore, notice that $\letterkapeps{\hat{\gamma}}{s}{\mu}$ lies in the negative cone generated by optimal displacements \[\begin{split}
	\operatorname{dis}^+(\mu)\triangleq \Big\{c(\operatorname{Id}-F)^\top:\ \  F:\rd\rightarrow\rd\text{ is an optimal transportation}&{}\\\text{map between }\mu\text{ and }F\sharp\mu,\ \ c\in [0,&+\infty)\Big\}.\end{split}\]

\begin{theorem}\label{lb:th:subsol}
	Let $D_0$ be a bounded domain, and let $D$, $D^{(1)}$ be defined by~\eqref{feedback:intro:D},\eqref{feedback:intro:D_1} respectively. Assume that $\psi\in \operatorname{UC}([0,T]\times D^{(1)})$ satisfies the following property 
	\begin{itemize}
		\item $\psi(T,\mu)\leq G(\mu)$;
		\item there exist constants $C>0$ and $\varepsilon_0>0$ such that, for every $(s,\mu)\in [0,T]\times D^{(1)}$, $\varepsilon\in (0,\varepsilon_0)$, $(a,\gamma)\in \partial_{P,\varepsilon}^+\psi(s,\mu)$ with $\mathscr{b}[\gamma]\in\operatorname{dis}^+(\mu)$, one has that 
		\[a+H(s,\mu,\mathscr{b}[\gamma])\geq -C\varepsilon.\]
	\end{itemize} Then, there exists function for every $(s_*,\mu_*)\in [0,T]\times D_0$,
	\[\operatorname{Val}(s_*,\mu_*)\geq \psi(s_*,\mu_*).\]
\end{theorem}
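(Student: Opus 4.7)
The plan is to mirror the argument of Theorem~\ref{feedback:th:u_kapeps} in dual form, now quantifying over all relaxed controls rather than constructing one. Fix an arbitrary $\xi \in \mathcal{U}_{s_*,T}$, let $m_\cdot \triangleq m_\cdot[s_*,\mu_*,\xi]$, and work with the upper Moreau--Yosida regularization $\psi^\varkappa = -\varphi_\varkappa$ where $\varphi \triangleq -\psi$. Fix a partition $\Delta = \{s_i\}_{i=0}^n$ with mesh bounded between $\alpha_*$ and $\alpha^*$, set $\mu_i \triangleq m_{s_i}$, and let $(t_i,\nu_i) \triangleq \pairkapeps{s_i}{\mu_i}$ be the Ekeland pseudo-maximizer for $\psi^\varkappa$, together with the optimal plan $\letterkapeps{\pi}{s_i}{\mu_i}$. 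By \eqref{lb:incl:super_diff_psi}, the pair $(\hat a_i,\hat\gamma_i) \triangleq (\letterkapeps{\hat a}{s_i}{\mu_i},\letterkapeps{\hat\gamma}{s_i}{\mu_i})$ is a proximal $\varepsilon$-supergradient of $\psi$ and $\mathscr{b}[\hat\gamma_i] \in \operatorname{dis}^+$, so the theorem's hypothesis yields
\[
\hat a_i + H(t_i,\nu_i,\mathscr{b}[\hat\gamma_i]) \geq -C\varepsilon.
\]
As in the feedback proof, Corollary~\ref{nonsmooth:corollary:bound_2} confines $(t_i,\nu_i)$ to $(0,T)\times\operatorname{int}(D^{(1)})$ once $i$ is separated from the endpoints; let $k$ and $l$ be the cutoff indices of that proof.

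Next I transfer Lemma~\ref{nonsmooth:lm:shift} to $\psi^\varkappa$: since $\psi^\varkappa = -\varphi_\varkappa$ and $\hat a_i = -\letterkapeps{a}{s_i}{\mu_i}$, $\hat\gamma_i$ is obtained from $\letterkapeps{\gamma}{s_i}{\mu_i}$ by reflecting the second coordinate, applying that lemma to $\varphi$ with shift along the averaged velocity
\[
v_i(x) \triangleq \frac{1}{s_{i+1}-s_i}\int_{s_i}^{s_{i+1}}\!\!\int_U f(t,X^{s_i,t}_{m_\cdot,\xi}(x),m_t,u)\,\xi(du|t)\,dt
\]
and negating produces the dual shift inequality
\[
\psi^\varkappa(s_{i+1},\mu_{i+1}) \geq \psi^\varkappa(s_i,\mu_i) + \hat a_i(s_{i+1}-s_i) + (s_{i+1}-s_i)\int_{\rd}\mathscr{b}[\hat\gamma_i](x)v_i(x)\mu_i(dx) - \mathcal{E}_i^{(1)},
\]
with $\mathcal{E}_i^{(1)}$ the usual $O(\varkappa^{-2}(s_{i+1}-s_i)^2) + O(\varepsilon\varrho_2(\varkappa))$ error, where $\mu_{i+1} = (\operatorname{Id}+(s_{i+1}-s_i)v_i)\sharp\mu_i$ by Proposition~\ref{prop:control:superposition}. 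Using conditions~\ref{control:cond:f_omega}, \ref{control:cond:f_Lip}, and Proposition~\ref{app:prop:bounds}, I replace $f(t,X^{s_i,t}_{m_\cdot,\xi}(x),m_t,u)$ by $f(t_i,x,\nu_i,u)$ inside the integral modulo a further error $\mathcal{E}_i^{(2)}$ vanishing as $\varkappa,\alpha^* \to 0$, in exact parallel with the passage from~\eqref{feedback:ineq:varphi_i_first} to~\eqref{feedback:ineq:varphi_i_second}.

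The key conceptual switch from the feedback proof is now this: instead of picking $u_i$ to realize the minimum defining $H$, I use that $H$ \emph{is} the minimum, so for every $u \in U$,
\[
\int_{\rd}\mathscr{b}[\hat\gamma_i](x)f(t_i,x,\nu_i,u)\nu_i(dx) + L(t_i,\nu_i,u) \geq H(t_i,\nu_i,\mathscr{b}[\hat\gamma_i]).
\]
Integrating this pointwise inequality against $\xi(du|t)\,dt$ over $[s_i,s_{i+1}]\times U$, combining with the supergradient Hamiltonian bound $\hat a_i + H \geq -C\varepsilon$, and absorbing the Lipschitz replacements of $L(t,m_t,u)$ by $L(t_i,\nu_i,u)$ via condition~\ref{control:cond:L} and Proposition~\ref{app:prop:bounds}, I arrive at
\[
\psi^\varkappa(s_{i+1},\mu_{i+1}) - \psi^\varkappa(s_i,\mu_i) + \int_{s_i}^{s_{i+1}}\!\!\int_U L(t,m_t,u)\,\xi(du|t)\,dt \geq -C(s_{i+1}-s_i)\varepsilon - \mathcal{E}_i,
\]
where $\mathcal{E}_i$ collects the vanishing errors.

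Telescoping over $i=k,\ldots,l-1$, handling the boundary segments $[s_0,s_k]$ and $[s_l,T]$ via the upper counterpart of Lemma~\ref{nonsmooth:lm:varphi_varphi_varkappa} (with $|\psi^\varkappa-\psi| \leq \varrho_3(\varkappa)$) together with the uniform bound on $L$ from Proposition~\ref{app:prop:bounds}, and invoking the terminal condition $\psi(T,\mu) \leq G(\mu)$, I obtain
\[
J[s_*,\mu_*,\xi] = G(m_T) + \int_{[s_*,T]\times U} L(t,m_t,u)\,\xi(d(t,u)) \geq \psi(s_*,\mu_*) - \Theta(\varkappa,\varepsilon,\alpha_*,\alpha^*),
\]
where $\Theta$ aggregates all errors. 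Selecting $(\varkappa,\varepsilon,\alpha_*,\alpha^*)$ from a parameter complex $\mathscr{S}(\eta)$ built exactly as at the end of the proof of Theorem~\ref{feedback:th:u_kapeps} forces $\Theta \leq \eta$. Since $\xi$ was arbitrary, $\operatorname{Val}(s_*,\mu_*) \geq \psi(s_*,\mu_*) - \eta$, and sending $\eta \downarrow 0$ concludes. The main obstacle is the careful bookkeeping of signs when dualizing Lemma~\ref{nonsmooth:lm:shift}: because $\hat\gamma_i$ involves the reflection $(\operatorname{p}^1,-\operatorname{p}^2)\sharp\gamma_i$ and the regularization switches $\inf \leftrightarrow \sup$, every plan integral and every Ekeland remainder has to be re-signed consistently, even though the computation is formally the mirror image of the feedback case.
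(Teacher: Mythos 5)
Your proposal is correct and follows essentially the same route as the paper's proof: fix an arbitrary relaxed control, work with the upper Moreau--Yosida regularization and its Ekeland pseudo-maximizers to obtain proximal $\varepsilon$-supergradients with barycenter in $\operatorname{dis}^+$, replace the feedback-selection step by the observation that the Hamiltonian is a pointwise minimum so the stage inequality holds for every $u$ and survives integration against $\xi$, telescope, and handle the boundary strips exactly as in the proof of Theorem~\ref{feedback:th:u_kapeps}. The only inessential packaging difference is that the paper fixes a uniform partition of mesh $\alpha=(T-s_*)/n$ together with the coupling $\varepsilon<\alpha^2$ and then sends $n\to\infty$ and $\varkappa\to 0$ successively, rather than assembling a parameter complex $\mathscr{S}(\eta)$; the resulting error estimates are identical.
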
 This statement, and inclusion~\eqref{lb:incl:subdiff} immediately imply the following. 
\begin{corollary}\label{lb:corollary:psi_Val} If $\psi\in\operatorname{UC}([0,T]\times\prd)$ is a strict viscosity solution, then, for every $(s_*,\mu_*)\in [0,T]\times\prd$,
	\[\operatorname{Val}(s_*,\mu_*)\geq\psi(s_*,\mu_*).\]
\end{corollary}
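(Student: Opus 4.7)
The plan is to reduce the corollary directly to Theorem~\ref{lb:th:subsol} by verifying its hypotheses for any strict viscosity solution. The reduction rests on two ingredients already in the excerpt: Definition~\ref{Bellman:def:viscosity_solution}, which provides a uniform bound on $a+H$ over directional $\varepsilon$-superdifferentials of a strict subsolution, and the inclusion~\eqref{lb:incl:subdiff}, which sends proximal $\varepsilon$-supergradients to directional $\varepsilon$-supergradients via the barycenter map.

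First I would fix an arbitrary $(s_*,\mu_*)\in[0,T]\times\prd$ and pick a bounded domain $D_0\subset\prd$ containing $\mu_*$, then build $D$ and $D^{(1)}$ from $D_0$ using~\eqref{feedback:intro:D},~\eqref{feedback:intro:D_1}. Since $\psi$ is a strict viscosity solution it is in particular a strict viscosity subsolution, so Definition~\ref{Bellman:def:viscosity_solution} applied to the bounded set $D^{(1)}$ delivers a single constant $C=C(D^{(1)})\ge 0$ with
\[
a+H(s,\mu,p)\ \ge\ -C\varepsilon\qquad\text{for all }s\in(0,T),\ \mu\in D^{(1)},\ \varepsilon>0,\ (a,p)\in\partial_{D,\varepsilon}^+\psi(s,\mu).
\]
Now I would take an arbitrary $(\hat a,\hat\gamma)\in\partial_{P,\varepsilon}^+\psi(s,\mu)$ with $\mathscr{b}[\hat\gamma]\in\operatorname{dis}^+(\mu)$. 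Inclusion~\eqref{lb:incl:subdiff} places $(\hat a,\mathscr{b}[\hat\gamma])\in\partial_{D,\varepsilon}^+\psi(s,\mu)$, so substituting $p=\mathscr{b}[\hat\gamma]$ gives $\hat a+H(s,\mu,\mathscr{b}[\hat\gamma])\ge -C\varepsilon$, which is exactly the second bullet of Theorem~\ref{lb:th:subsol} with $\varepsilon_0$ arbitrary. Granting the terminal bound $\psi(T,\cdot)\le G$ — tacitly required here, mirroring the explicit hypothesis $\varphi(T,\cdot)\ge G$ in Corollary~\ref{feedback:corollary:bound} — the theorem yields $\operatorname{Val}(s_*,\mu_*)\ge\psi(s_*,\mu_*)$ for every $\mu_*\in D_0$; since $D_0$ was arbitrary, the inequality holds throughout $[0,T]\times\prd$.

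The only point that is not pure bookkeeping is the observation that Definition~\ref{Bellman:def:viscosity_solution} supplies a single constant $C=C(D^{(1)})$ uniform in $(s,\mu,\varepsilon)$ rather than a point-wise or $\varepsilon$-dependent one: this uniformity is precisely what the word ``strict'' contributes in the strict-viscosity formulation, and it matches the uniform form demanded by Theorem~\ref{lb:th:subsol} without any modification. There is therefore no substantive analytical obstacle within the corollary itself; the real work — the Moreau–Yosida telescoping, the Hamiltonian slack inherited from the pseudo-maximizers via~\eqref{lb:incl:super_diff_psi}, and the treatment of the short boundary segments — lives inside Theorem~\ref{lb:th:subsol}, which the corollary simply packages in its cleanest form.
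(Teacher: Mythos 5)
Your proposal is correct and matches the paper's own (very terse) proof: the corollary is deduced from Theorem~\ref{lb:th:subsol} by applying inclusion~\eqref{lb:incl:subdiff} to pass from proximal to directional $\varepsilon$-supergradients and reading off the uniform constant $C(D^{(1)})$ from the strict-subsolution part of Definition~\ref{Bellman:def:viscosity_solution}. Your remark that the terminal bound $\psi(T,\cdot)\le G$ is tacitly assumed is also accurate — the paper leaves this implicit in the notion of a strict viscosity solution satisfying boundary condition~\eqref{Bellman:cond:boundary}.
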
 This, the fact that the value function is uniquely defined and Corollary~\ref{feedback:corollary:bound} imply the following.
\begin{corollary}\label{lb:corollary:uniqueness}
	There exists at most one strict viscosity solution of~\eqref{Bellman:eq:HJ} in the class of functions uniformly continuous on each bounded set.
\end{corollary}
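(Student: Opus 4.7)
The plan is to run the argument of Theorem~\ref{feedback:th:u_kapeps} in reverse, replacing the lower Moreau--Yosida regularization $\varphi_\varkappa$ by its upper counterpart $\psi^\varkappa$. The key structural simplification is that, by the very definition~\eqref{Bellman:intro:Hamiltonian} of the Hamiltonian as a pointwise minimum over $U$, the inequality
\[\int_{\rd} \mathscr{b}[\hat\gamma](y) f(s, y, \mu, u)\mu(dy) + L(s, \mu, u) \geq H(s, \mu, \mathscr{b}[\hat\gamma])\]
holds for \emph{every} $u \in U$. In contrast with the upper-bound argument, no feedback needs to be constructed, and the same per-step estimate will be produced uniformly for every admissible relaxed control.

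Fix $\eta > 0$ and an arbitrary $\xi \in \mathcal{U}_{s_*, T}$; let $m_\cdot \triangleq m_\cdot[s_*, \mu_*, \xi]$, which remains in the set $D$ of~\eqref{feedback:intro:D} by Proposition~\ref{app:prop:bounds}. Take a partition $\Delta = \{s_i\}_{i=0}^{n}$ of $[s_*, T]$ with mesh in $[\alpha_*, \alpha^*]$, set $\mu_i \triangleq m_{s_i}$, and at each $s_i$ fix a pseudo-maximizer $(t_i, \nu_i) = \pairkapeps{s_i}{\mu_i}$ of $\psi^\varkappa$ together with an optimal plan $\pi_i$ between $\mu_i$ and $\nu_i$. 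Set $\hat a_i \triangleq t_i - s_i$ and $\hat\gamma_i \triangleq \letterkapeps{\hat\gamma}{s_i}{\mu_i}$; by~\eqref{lb:incl:super_diff_psi}, $(\hat a_i, \hat\gamma_i) \in \partial_{P,\varepsilon}^+ \psi(s_i, \mu_i)$ and $\mathscr{b}[\hat\gamma_i] \in \operatorname{dis}^+(\mu_i)$, so the hypothesis yields $\hat a_i + H(s_i, \mu_i, \mathscr{b}[\hat\gamma_i]) \geq -C\varepsilon$. The main analytic input is an upper-regularization analogue of Lemma~\ref{nonsmooth:lm:shift}, obtained by applying that lemma to $\varphi = -\psi$ (and using $\psi^\varkappa = -\varphi_\varkappa$): for the averaged velocity
\[v_i(y) \triangleq \frac{1}{s_{i+1}-s_i}\int_{s_i}^{s_{i+1}}\!\int_U f\bigl(t, X_{m_\cdot,\xi}^{s_i, t}(y), m_t, u\bigr)\xi(du|t)\,dt\]
and a suitably chosen three-marginal plan, one obtains a lower bound on $\psi^\varkappa(s_{i+1}, \mu_{i+1}) - \psi^\varkappa(s_i, \mu_i)$ built from $\hat a_i(s_{i+1}-s_i)$, $(s_{i+1}-s_i)\int \mathscr{b}[\hat\gamma_i](y)v_i(y)\mu_i(dy)$, and error terms of the same structure as in the proof of Theorem~\ref{feedback:th:u_kapeps}.

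Adding $\int_{s_i}^{s_{i+1}}\int_U L(t, m_t, u)\xi(du|t)\,dt$ to both sides and replacing $f$, $L$ at $(t, m_t, u)$ by their values at $(t_i, \nu_i, u)$, $(t_i, \mu_i, u)$ via the Lipschitz/modulus assumptions in~\ref{control:cond:f_omega}--\ref{control:cond:L} and Proposition~\ref{app:prop:bounds}, produces for each $u \in U$ the expression $\hat a_i + \int \mathscr{b}[\hat\gamma_i](y) f(t_i, y, \nu_i, u)\mu_i(dy) + L(t_i, \mu_i, u)$, which is bounded below by $\hat a_i + H(s_i, \mu_i, \mathscr{b}[\hat\gamma_i]) \geq -C\varepsilon$ (the discrepancy between $(t_i, \nu_i)$ and $(s_i, \mu_i)$ being absorbed in the same moduli-of-continuity terms). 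The bound is preserved on averaging against $\xi(\cdot|t)$ because it holds pointwise in $u$. Telescoping over the interior indices $i = k, \ldots, l-1$ chosen as in Section~\ref{sect:feedback}, and handling the boundary contributions $s_* \to s_k$ and $s_l \to T$ via Lemma~\ref{nonsmooth:lm:varphi_varphi_varkappa} (applied to $-\psi$) and the inequality $\psi(T, m_T) \leq G(m_T)$, yields the reverse of~\eqref{feedback:ineq:J_bound}:
\[J[s_*, \mu_*, \xi] \geq \psi(s_*, \mu_*) - T C\varepsilon - (T/\alpha_*)\varepsilon\varrho_2(\varkappa) - \varrho_{10}'(\varkappa) - \varrho_{11}'(\varkappa, \alpha^*).\]
Selecting $(\varkappa, \varepsilon, \alpha_*, \alpha^*)$ from a parameter complex $\mathscr{S}(\eta)$ constructed exactly as in the proof of Theorem~\ref{feedback:th:u_kapeps} forces the right-hand side to be at least $\psi(s_*, \mu_*) - \eta$; arbitrariness of $\xi$ and $\eta$ then gives $\operatorname{Val}(s_*, \mu_*) \geq \psi(s_*, \mu_*)$.

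The main obstacle is sign bookkeeping in the upper-regularization analogue of Lemma~\ref{nonsmooth:lm:shift}: the pseudo-maximizer for $\psi^\varkappa$ lies on the opposite side of $(s, \mu)$ from the pseudo-minimizer for $\varphi_\varkappa$, so $\hat a_i = -a_i$ and $\mathscr{b}[\hat\gamma_i]$ is the pointwise negative of $\mathscr{b}[\gamma_i]$, which places it in $\operatorname{dis}^+(\mu_i)$ and reverses the sign of every linear term in the shift estimate. Once these signs are tracked correctly, the remainder is a direct transcription of the argument of Section~\ref{sect:feedback}, replacing $a_i$ by $\hat a_i$, $\gamma_i$ by $\hat\gamma_i$, the chosen feedback control by the given $\xi$, and $\leq$ by $\geq$ throughout.
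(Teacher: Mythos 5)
Your argument, as written, establishes the inequality $\operatorname{Val}(s_*,\mu_*)\geq\psi(s_*,\mu_*)$ for a function $\psi$ whose proximal $\varepsilon$-supergradients with barycenters in $\operatorname{dis}^+(\mu)$ satisfy the subsolution bound $a+H\geq -C\varepsilon$ and with $\psi(T,\cdot)\leq G$. That is a (correct-looking) re-derivation of Theorem~\ref{lb:th:subsol} and its consequence Corollary~\ref{lb:corollary:psi_Val}, which the paper already proves. But it is not a proof of Corollary~\ref{lb:corollary:uniqueness}, which is a \emph{uniqueness} statement. You stop at a one-sided bound and never conclude.

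The missing idea is the pairing with the matching upper bound. The paper's proof of this corollary is essentially one line: if $\psi$ is a strict viscosity solution (satisfying the boundary condition~\eqref{Bellman:cond:boundary}), then Corollary~\ref{lb:corollary:psi_Val} (built from Theorem~\ref{lb:th:subsol}, i.e.\ the argument you reproduce) gives $\psi\leq\operatorname{Val}$, while Corollary~\ref{feedback:corollary:bound} (built from the proximal-aiming Theorem~\ref{feedback:th:u_kapeps}, since a strict supersolution is in particular a viscosity supersolution) gives $\operatorname{Val}\leq\psi$. Hence every strict viscosity solution in $\operatorname{UC}$ equals $\operatorname{Val}$, and in particular there is at most one. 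Your write-up never invokes Corollary~\ref{feedback:corollary:bound} (or any replacement for it) and never draws the identification $\psi=\operatorname{Val}$, so the uniqueness claim is left unproved. To repair it, either cite the two already-established corollaries and combine them, or, if you wish to keep your self-contained derivation, append the symmetric upper-bound argument and then close with the two-sided identification.
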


\begin{proof}[Proof of Theorem~\ref{lb:th:subsol}] First, let $\xi\in\mathcal{U}_{s_*,T}$ and $m_\cdot\triangleq m_\cdot[s_*,\mu_*,\xi]$.
	
For each natural $n$, we put 
\begin{equation}\label{lb:intro:s_i}\alpha\triangleq \frac{1}{n}(T-s_*),\ \ s_i\triangleq s_*+i\alpha,\ \ i=0,\ldots,n.\end{equation} Furthermore, let
 $\varkappa>0$ and $\varepsilon\in (0,\varepsilon_0)$ be such that 
 \begin{equation}\label{lb:intro_ineq:vareps}\varepsilon<\alpha^2.\end{equation}
As in the proof of Theorem~\ref{feedback:th:u_kapeps}, we use the designations:
		\begin{itemize}
	\item $\mu_i\triangleq m_{s_i}$;
	\item $t_i\triangleq \letterkapeps{t}{s_i}{\mu_i}$;
	\item $\nu_i\triangleq \letterkapeps{\nu}{s_i}{\mu_i}$;
	\item $\pi_i\triangleq \letterkapeps{\pi}{s_i}{\mu_i}$;
	\item $\hat{a}_i\triangleq \letterkapeps{\hat{a}}{s_i}{\mu_i}=(\letterkapeps{t}{s_i}{\mu_i}-s_i)$;
	\item $\gamma_i\triangleq \letterkapeps{\hat{\gamma}}{s_i}{\mu_i}$.
\end{itemize} Furthermore, let $k$ be the maximal number such that $s_{k-1}\leq \varrho_{1}(\varkappa)$ and $l$ be a minimal number satisfying $T-s_l\leq \varrho_{1}(\varkappa)$. Similarly to formulae~\eqref{feedback:ineq:varphi_kappa_s_l_T},~\eqref{feedback:ineq:L_s_l_T},~\eqref{feedback:ineq:varphi_kappa_s_k_0},~\eqref{feedback:ineq:L_s_0_s_k},
\begin{equation}\label{lb:ineq:psi_kappa_s_l_T}
	|\psi^\varkappa(s_l,\mu_l)-\psi(T,m_T)|\leq \varrho_{4}'(\varkappa),
\end{equation} 
\begin{equation}\label{lb:ineq:L_s_l_T}
	\Bigg|\int_{s_l}^T\int_U L(t,m_t,\xi)\xi(du|t)dt\Bigg|\leq C_3\varrho_{1}(\varkappa).
\end{equation} 
 \begin{equation}\label{lb:ineq:psi_kappa_s_k_0}
 	|\psi(s_0,\mu_0)- \psi(s_k,\mu_k)|\leq \varrho_{5}'(\varkappa,\alpha^*),
 \end{equation} 
	\begin{equation}\label{lb:ineq:L_s_0_s_k}
 	\Bigg|\int_{s_0}^{s_k}\int_U L(t,m_t,\xi)\xi(du|t)dt\Bigg|\leq C_3(\varrho_{1}(\varkappa)+\alpha^*).
 \end{equation} 
 
Following the proof of Theorem~\ref{feedback:th:u_kapeps}, we deduce
 \begin{equation}\label{lb:ineq:varphi_i_third}
 	\begin{split}
 		\psi^\varkappa(s_{i+1}&,\mu_{i+1})+\int_{s_i}^{s_{i+1}}\int_U L(t,m_t,u)\xi(du|t)dt\geq \psi^\varkappa(s_i,\mu_i)\\&+ \hat{a}_i(s_{i+1}-s_i)+\int_{\rd}\mathscr{b}[\hat\gamma_i](y)\cdot \int_{s_i}^{s_{i+1}}\int_U f(t_i,x,\nu_i,u)\xi(du|t)dt\mu_i(dy)\\&+\int_{s_i}^{s_{i+1}}\int_UL(t_i,\mu_i,u)\xi(du|t)dt\\&-(s_{i+1}-s_i)\varrho_{8}'(\varkappa,s_{i+1}-s_i)-(s_{i+1}-s_i)\varrho_{9}'(\varkappa)-\varepsilon\varrho_{2}(\varkappa).
 	\end{split}
 \end{equation} By the Hamiltonian’s definition (see~\eqref{Bellman:intro:Hamiltonian}),
 \[
 \begin{split}
 	\hat{a}_i(s_{i+1}-s_i)+\int_{\rd}\mathscr{b}[\hat\gamma_i](y)\cdot \int_{s_i}^{s_{i+1}}\int_U f(&t_i,x,\nu_i,u)\xi(du|t)dt\mu_i(dy)\\&+\int_{s_i}^{s_{i+1}}\int_UL(t_i,\mu_i,u)\xi(du|t)dt\\ \geq 
 	(s_{i+1}-s_i)\big[\hat{a}_i&+H(t_i,\nu_i,\mathscr{b}[\hat\gamma_i])\big]
 \end{split}
 \] Due to the theorem's assumption \[\hat{a}_i+H(t_i,\nu_i,\mathscr{b}[\hat\gamma_i])\geq -C\varepsilon.\] From~\eqref{lb:ineq:varphi_i_third}, we obtain 
 \begin{equation*}
 	\begin{split}
 		\psi^\varkappa(s_{i+1},\mu_{i+1})+\int_{s_i}^{s_{i+1}}\int_U &L(t,m_t,u)\xi(du|t)dt\geq \psi^\varkappa(s_i,\mu_i)\\&-C\varepsilon (s_{i+1}-s_i)-(s_{i+1}-s_i)\varrho_{8}'(\varkappa,s_{i+1}-s_i)\\&-(s_{i+1}-s_i)\varrho_{9}'(\varkappa)-\varepsilon\varrho_{2}(\varkappa).
 	\end{split}
 \end{equation*} Summing these inequalities and taking into account estimates \eqref{lb:ineq:psi_kappa_s_l_T}--\eqref{lb:ineq:L_s_0_s_k}, we conclude that 
 	\[ \begin{split}
 	J[s_*,\mu_*,\xi]&=\psi(T,m_T)+\int_{s_0}^{s_n}\int_U L(t,m_t,u)\xi(du|t)dt\\&\geq \psi(s_*,\mu_*)-TC\varepsilon-\frac{T}{\alpha}\varepsilon\varrho_{2}(\varkappa)-\varrho_{10}'(\varkappa)-\varrho_{11}'(\varkappa,\alpha)\\ &\geq 
 	 \psi(s_*,\mu_*)-\frac{T^3C}{n^2}-\frac{T^2}{n}\varrho_{2}(\varkappa)-\varrho_{10}'(\varkappa)-\varrho_{11}'(\varkappa,(T-s_*)/n).
 \end{split}\] In the last passage, we choice of partition~\eqref{lb:intro:s_i} and condition~\eqref{lb:intro_ineq:vareps}. Successively passing to the limit as $n\rightarrow\infty$ and $\varkappa\rightarrow 0$, we conclude that 
 \[J[s_*,\mu_*,\xi]\geq \psi(s_*,\mu_*).\] Since $\xi$ is chosen arbitrarily, we obtain the conclusion of the theorem.
\end{proof}


\appendix
\section{Trajectories generated by the continuity equation}\label{sect:app:motion}
\begin{proposition}\label{app:prop:bounds}
	Let $s\in [0,T]$, $\mu\in \prd$, $\xi\in\mathcal{U}_{s,T}$, $m_\cdot=m_\cdot[s,\mu,\xi]$. Then, there exist constants $c_1$, $c_2$, $c_3$ and $c_4$ such that, for every $t\in [s,T]$,
	\begin{enumerate}
		\item\label{control:prop:statement_m_t} $\varsigma(m_t)\leq c_1(1+\varsigma(\mu))$;
		\item $W_2(m_t,\mu)\leq c_2(1+\varsigma(\mu))|t-s|$;
		\item\label{control:prop:statement_X_t} $|X^{s,t}_{m_\cdot}(y)|\leq c_3(1+|y|+\varsigma(\mu))$;
		\item $|X^{s,t}_{m_\cdot}(y)-y|\leq c_4(1+|y|+\varsigma(\mu))|t-s|$
	\end{enumerate}
\end{proposition}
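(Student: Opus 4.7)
The plan is to exploit the superposition principle (Proposition~\ref{prop:control:superposition}), which reduces everything to the characteristic ODE
\[\dot{x}(\tau)=\int_U f(\tau,x(\tau),m_\tau,u)\xi(du|\tau),\qquad x(s)=y,\]
together with the sublinear bound \eqref{control:ineq:sublinear}. The key observation is that statements (1) and (3) are coupled: the size of the trajectory $x(\tau)=X^{s,\tau}_{m_\cdot,\xi}(y)$ is controlled by $\varsigma(m_\tau)$, while $\varsigma(m_\tau)^2=\int|x(\tau)|^2\mu(dy)$ by the push-forward identity $m_\tau=X^{s,\tau}_{m_\cdot,\xi}\sharp\mu$. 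So I would attack (1) first via a Gronwall-type integral estimate on second moments, then bootstrap pointwise.

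First I would compute $\tfrac{d}{d\tau}|x(\tau)|^2=2x(\tau)^\top\int_U f(\tau,x(\tau),m_\tau,u)\xi(du|\tau)$, estimate by \eqref{control:ineq:sublinear}, and use $2|x|\cdot(1+|x|+\varsigma(m))\leq C'(1+|x|^2+\varsigma(m)^2)$ via Young's inequality. Integrating against $\mu(dy)$ and Fubini give
\[\varsigma(m_\tau)^2\leq \varsigma(\mu)^2+C'(\tau-s)+2C'\int_s^\tau\varsigma(m_r)^2\,dr,\]
to which Gronwall yields $\varsigma(m_\tau)^2\leq (\varsigma(\mu)^2+C'T)e^{2C'T}$, and a square root and the inequality $\sqrt{a^2+b^2}\leq a+b$ deliver statement (1) with a suitable $c_1$.

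With (1) in hand, the pointwise bound (3) follows from the scalar inequality
\[|x(\tau)|\leq |y|+C_1\int_s^\tau\bigl(1+|x(r)|+c_1(1+\varsigma(\mu))\bigr)dr,\]
to which Gronwall applied in $\tau$ gives $|x(\tau)|\leq c_3(1+|y|+\varsigma(\mu))$. Statement (4) is then immediate by substituting (3) and (1) into $|x(\tau)-y|\leq\int_s^\tau|\dot{x}(r)|\,dr$ and estimating via \eqref{control:ineq:sublinear}. Finally, (2) comes from the trivial transport plan $(\operatorname{Id},X^{s,\tau}_{m_\cdot,\xi})\sharp\mu\in\Pi(\mu,m_\tau)$:
\[W_2^2(m_\tau,\mu)\leq \int_{\rd}|X^{s,\tau}_{m_\cdot,\xi}(y)-y|^2\mu(dy)\leq c_4^2(\tau-s)^2\int_{\rd}(1+|y|+\varsigma(\mu))^2\mu(dy),\]
and the elementary bound $(1+|y|+\varsigma(\mu))^2\leq 3(1+|y|^2+\varsigma(\mu)^2)$ followed by integration against $\mu$ absorbs everything into a constant times $(1+\varsigma(\mu))^2$.

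The one real obstacle is the circularity in step one: the instantaneous bound on $|\dot{x}|$ requires $\varsigma(m_\tau)$, which is itself determined by the trajectories. The remedy is precisely the closed Gronwall inequality above, obtained by integrating the differential of $|x|^2$ against the \emph{initial} measure $\mu$, which replaces the unknown $\varsigma(m_\tau)$ on the left with an integral functional of the same quantity on the right. Once (1) decouples, the remaining statements are routine integration of the ODE.
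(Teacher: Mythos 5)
Your proof is correct. Note that the paper itself does not supply a proof of this proposition; it simply cites \cite{Averboukh23}, so there is nothing to compare against line by line. That said, the argument you give is the standard one, and I would be surprised if the cited reference proceeds differently: reduce to the characteristic ODE via the superposition principle, obtain the second-moment bound (1) by a closed Gronwall inequality (computing $\tfrac{d}{d\tau}|x(\tau)|^2$, estimating with the sublinear bound \eqref{control:ineq:sublinear} and Young's inequality, then integrating against $\mu$ and using the push-forward identity $\int |X^{s,\tau}_{m_\cdot}|^2\,d\mu=\varsigma(m_\tau)^2$ to close the loop), bootstrap to the pointwise bounds (3) and (4) by a second scalar Gronwall, and finally obtain (2) from the transport plan $(\operatorname{Id},X^{s,\tau}_{m_\cdot})\sharp\mu$. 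Your identification of the apparent circularity between (1) and (3), and its resolution by integrating the squared-modulus inequality against the fixed initial measure $\mu$ rather than trying to bound trajectories pointwise first, is exactly the right insight. One small technical point worth making explicit in a write-up: the Gronwall step in (1) requires knowing a priori that $r\mapsto\varsigma(m_r)^2$ is integrable on $[s,T]$; this is supplied by the continuity $m_\cdot\in C([s,T];\prd)$, which the paper builds into the definition of solutions, so the argument is complete.
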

This statement is proved in \cite{Averboukh23}.
\begin{corollary}\label{app:corollary:compact} The set $\{m_t[s,\mu,\xi]:\, \xi\in\mathcal{U}_{s,T},\, t\in [s,T]\}$ is compact in $\prd$.
\end{corollary}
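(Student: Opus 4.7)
The plan is to show that the set
\[\mathcal{K}\triangleq\{m_t[s,\mu,\xi]:\, \xi\in\mathcal{U}_{s,T},\, t\in[s,T]\}\]
is both pre-compact and closed in $(\prd,W_2)$. For pre-compactness I would combine narrow tightness (which follows from the uniform second-moment bound of Proposition~\ref{app:prop:bounds}\ref{control:prop:statement_m_t}) with uniform integrability of $|x|^2$; together these characterize pre-compact subsets of $\prd$. For closedness I would exploit the sequential compactness of the time interval $[s,T]$ and of the set of relaxed controls $\mathcal{U}_{s,T}$, together with the continuous dependence result of Proposition~\ref{control:prop:convergence}.

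For the pre-compactness step, Proposition~\ref{app:prop:bounds}\ref{control:prop:statement_m_t} immediately gives $\sup_{m\in\mathcal{K}}\varsigma(m)\leq c_1(1+\varsigma(\mu))<\infty$, hence narrow tightness. To upgrade this to $W_2$-pre-compactness it suffices to check that
\[\lim_{R\to\infty}\sup_{m\in\mathcal{K}}\int_{\{|x|>R\}}|x|^2\,m(dx)=0.\]
Here I would use the superposition principle (Proposition~\ref{prop:control:superposition}) to write $m_t[s,\mu,\xi]=X^{s,t}_{m_\cdot}\sharp\mu$ and invoke the pointwise estimate $|X^{s,t}_{m_\cdot}(y)|\leq c_3(1+|y|+\varsigma(\mu))$ of Proposition~\ref{app:prop:bounds}\ref{control:prop:statement_X_t}. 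Since $\{|X^{s,t}_{m_\cdot}(y)|>R\}\subset\{|y|>R/c_3-1-\varsigma(\mu)\}$, the change of variables yields
\[\int_{\{|x|>R\}}|x|^2\,m_t[s,\mu,\xi](dx)\leq c_3^2\int_{\{|y|>R/c_3-1-\varsigma(\mu)\}}(1+|y|+\varsigma(\mu))^2\,\mu(dy),\]
and the right-hand side is independent of $(t,\xi)$ and tends to zero as $R\to\infty$ since $\mu\in\prd$.

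For closedness, suppose $m_{t_n}[s,\mu,\xi_n]\to m^*$ in $W_2$. The interval $[s,T]$ is compact and, since $U$ is compact, so is $\mathcal{U}_{s,T}$; after extracting a subsequence I may assume $t_n\to t^*\in[s,T]$ and $\xi_n\to\xi^*\in\mathcal{U}_{s,T}$. Proposition~\ref{control:prop:convergence} then gives $m_\cdot[s,\mu,\xi_n]\to m_\cdot[s,\mu,\xi^*]$, so combined with $t_n\to t^*$, the time-continuity of $t\mapsto m_t[s,\mu,\xi^*]$ (itself a consequence of Proposition~\ref{app:prop:bounds}), and the triangle inequality one gets $m_{t_n}[s,\mu,\xi_n]\to m_{t^*}[s,\mu,\xi^*]$ in $W_2$. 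Hence $m^*=m_{t^*}[s,\mu,\xi^*]\in\mathcal{K}$.

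The main subtlety lies in the first step: the uniform $\varsigma$-bound alone secures only narrow pre-compactness, and upgrading to $W_2$-pre-compactness requires genuine control of the tails of $|x|^2$ uniformly in $(t,\xi)$. The key input is the pointwise trajectory bound of Proposition~\ref{app:prop:bounds}\ref{control:prop:statement_X_t}, which transports the integrability properties of $\mu$ uniformly along the flow. Once uniform integrability is in hand, closedness is largely bookkeeping with tools already at our disposal.
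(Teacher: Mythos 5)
Your pre-compactness argument is identical to the paper's: both deduce tightness from the uniform bound $\varsigma(m_t)\leq c_1(1+\varsigma(\mu))$ of Proposition~\ref{app:prop:bounds}, and both establish uniform integrability of $|x|^2$ by pushing forward through the flow $X^{s,t}_{m_\cdot}$ and invoking the pointwise estimate $|X^{s,t}_{m_\cdot}(y)|\leq c_3(1+|y|+\varsigma(\mu))$, with the same change of variables reducing the tail integral to one over $\{|y|\geq R/c_3 -1-\varsigma(\mu)\}$. Where you go beyond the paper is in the closedness step. The paper appeals to \cite[Proposition 7.1.5]{ambrosio}, which characterizes \emph{relative} compactness (tightness plus $2$-uniform integrability of moments), and then stops; it never verifies that the set is closed in $(\prd,W_2)$. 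You close this gap explicitly via the sequential compactness of $[s,T]\times\mathcal{U}_{s,T}$, the continuous-dependence statement of Proposition~\ref{control:prop:convergence}, the time-continuity of $t\mapsto m_t[s,\mu,\xi]$, and a triangle-inequality splitting — a correct and natural argument. So you have the same core mechanism as the paper, plus a legitimate extra step that the paper's proof, taken literally, omits (presumably because the authors only need relative compactness downstream, or are reading ``compact'' loosely).
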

\begin{proof}
	By \cite[Proposition 7.1.5]{ambrosio}, it suffices to show that  
	\[\int_{|x|>R} |x|^2m_t(dx)\rightarrow 0\] as $R\rightarrow 0$ uniformly w.r.t.\ $\xi\in \mathcal{U}_{s,T}$ and $t\in [s,T]$. To this end, notice that 
	\[\int_{|x|>R} |x|^2m_t(dx)=\int_{y:|X^{s,t}_{m_\cdot}(y)|>R} |X^{s,t}_{m_\cdot[s,\mu,\xi]}(y)|^2\mu(dy).\] Furthermore, if $|y|\leq R_0$, then statement~\ref{control:prop:statement_X_t} of Proposition~\ref{app:prop:bounds} gives that 
	$|X^{s,t}_{m_\cdot[s,\mu,\xi]}(y)|\leq c_3(1+R_0+\varsigma(\mu))$. Hence, if, given $R$, we put $R_0(R)\triangleq \frac{1}{c_3}R-1-\varsigma(\mu)$, 
	\[\int_{|x|>R} |x|^2m_t(dx)\leq (c_3)^2\int_{y:\, |y|\geq R_0(R)}(1+|y|+\varsigma(\mu))^2\mu(du).\] The integral in the right-hand side of this inequality tends to zero as $R\rightarrow\infty$ because $\mu\in\prd$. 
\end{proof}

Now we choose $\zeta\in \mathcal{P}(U)$. It is regarded as a constant relaxed control, i.e., we consider on $[s,\theta]$ a control $\xi$ defined by its disintegration as $\xi(\cdot|t)\triangleq \zeta(\cdot)$ for each $t\in [s,\theta]$. As above, we denote $m_\cdot=m_\cdot[s,\mu,\xi]$ and define
\[v^h(y)\triangleq \frac{1}{h}\int_s^{s+h}\int_U f(t,X^{s,t}_{m_\cdot}(y),m_\cdot,u)\zeta(du)dt. \] Notice that by construction, we have that 
$m_{s+h}=(\operatorname{Id}+h v^h)\sharp\mu.$ Finally, we put
\[v(y)\triangleq \int_U f(s,y,\mu,u)\zeta(du)\]

\begin{proposition}\label{app:prop:v_zeta} The following convergence holds true: 
	\[\|v^h-v\|_{\LTwo{\mu}}\rightarrow 0\text{ as }h\rightarrow 0.\]
\end{proposition}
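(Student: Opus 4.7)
The plan is to establish a pointwise estimate on $|v^h(y)-v(y)|$ that is linear in $(1+|y|+\varsigma(\mu))$ times a modulus tending to zero with $h$, and then integrate against $\mu\in\prd$ to obtain the $\LTwo{\mu}$-convergence. The whole argument is a direct application of the regularity assumptions on $f$ combined with Proposition~\ref{app:prop:bounds}; the only care needed concerns uniform integrability, which is automatic from $\mu\in\prd$.

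First I would start from the inequality (by Jensen/triangle inequality applied inside the time average, since $\zeta$ is a probability measure and $h^{-1}dt$ normalizes the time integral)
\[
|v^h(y)-v(y)|\leq \frac{1}{h}\int_s^{s+h}\int_U|f(t,X^{s,t}_{m_\cdot}(y),m_t,u)-f(s,y,\mu,u)|\,\zeta(du)\,dt.
\]
Then I split the integrand using the triangle inequality, introducing the intermediate value $f(s,X^{s,t}_{m_\cdot}(y),m_t,u)$, so that condition~\ref{control:cond:f_omega} controls the $t\mapsto s$ variation and condition~\ref{control:cond:f_Lip} controls the variations in $x$ and in the measure:
\[
\begin{split}
|f(t,X^{s,t}_{m_\cdot}(y),m_t,u)-f(s,y,\mu,u)|\leq{}& \omega_f(|t-s|)\bigl(1+|X^{s,t}_{m_\cdot}(y)|+\varsigma(m_t)\bigr)\\
&+C_f\bigl(|X^{s,t}_{m_\cdot}(y)-y|+W_2(m_t,\mu)\bigr).
\end{split}
\]

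Next, I would apply Proposition~\ref{app:prop:bounds} with $t\in[s,s+h]$ to obtain $|X^{s,t}_{m_\cdot}(y)|\leq c_3(1+|y|+\varsigma(\mu))$, $\varsigma(m_t)\leq c_1(1+\varsigma(\mu))$, $|X^{s,t}_{m_\cdot}(y)-y|\leq c_4(1+|y|+\varsigma(\mu))h$, and $W_2(m_t,\mu)\leq c_2(1+\varsigma(\mu))h$. Substituting these bounds and collecting terms yields
\[
|v^h(y)-v(y)|\leq K(\mu)\bigl(1+|y|\bigr)\bigl[\omega_f(h)+h\bigr],
\]
for some constant $K(\mu)$ depending only on $\varsigma(\mu)$ and the constants $c_1,c_2,c_3,c_4,C_f$.

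Finally, since $\mu\in\prd$, the function $y\mapsto (1+|y|)^2$ is $\mu$-integrable, so
\[
\|v^h-v\|_{\LTwo{\mu}}^2\leq K(\mu)^2\bigl[\omega_f(h)+h\bigr]^2\int_{\rd}(1+|y|)^2\,\mu(dy),
\]
and the right-hand side tends to zero as $h\downarrow 0$ because $\omega_f$ vanishes at zero by~\ref{control:cond:f_omega}. There is no genuine obstacle in this argument; the only point requiring a bit of attention is the uniform-in-$t$ use of Proposition~\ref{app:prop:bounds} on the short interval $[s,s+h]$, which is direct since the bounds there depend on $|t-s|\leq h$ and on $\varsigma(\mu)$ alone.
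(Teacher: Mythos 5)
Your proof is correct and takes essentially the same route as the paper: split $|f(t,X^{s,t}_{m_\cdot}(y),m_t,u)-f(s,y,\mu,u)|$ via the triangle inequality into a time-variation piece controlled by $\omega_f$ from~\ref{control:cond:f_omega} and a space/measure-variation piece controlled by $C_f$ from~\ref{control:cond:f_Lip}, then invoke Proposition~\ref{app:prop:bounds} and integrate against $\mu\in\prd$. The only cosmetic difference is the intermediate point of your decomposition and that you avoid invoking the Minkowski integral inequality by first producing a pointwise-in-$y$ bound uniform over $t\in[s,s+h]$, which the paper could equally have done.
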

\begin{proof}
	Indeed, due to conditions~\ref{control:cond:f_omega} and~\ref{control:cond:f_Lip}, we have that 
	\[\begin{split}
		|v^h(&y)-v(y)|\\&\leq \frac{1}{h}\int_{s}^{s+h}\big[\omega_f(|s-r|)(1+|y|+\varsigma(\mu)) +C_f(|X^{s,t}_{m_\cdot}-y|+W_2(m_t,\mu))\big]dt.\end{split}\] Here, as above, $C_f$ stands for the Lipschitz constant of the function $f$ w.r.t.\ $x$ and $m$. From Proposition~\ref{app:prop:bounds} and the Minkowski integral inequality, we deduce the desired convergence. 
\end{proof}

\begin{proposition}\label{app:prop:L_limit_zeta} One has that
	\[\frac{1}{h}\int_s^{s+h}\int_U L(t,m_t,u)\zeta(du)dt\rightarrow \int_U L(s,\mu,u)\zeta(du)\text{ as }h\rightarrow 0.\]
\end{proposition}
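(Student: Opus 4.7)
The plan is to estimate the difference under the average by splitting it into a measure-variable part and a time/control-variable part, then to apply the Lipschitz condition~\ref{control:cond:L} together with Proposition~\ref{app:prop:bounds} for the former and a uniform continuity argument on a compact set for the latter.

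First, I would write
\[
\begin{split}
\Bigg|\frac{1}{h}\int_s^{s+h}\int_U L(t,m_t,u)\zeta(du)dt - \int_U L(s,\mu,u)\zeta(du)\Bigg|\\ \leq\ \frac{1}{h}\int_s^{s+h}\int_U |L(t,m_t,u)-L(s,\mu,u)|\zeta(du)dt,
\end{split}
\]
and split the integrand as
\[
|L(t,m_t,u)-L(s,\mu,u)|\leq |L(t,m_t,u)-L(t,\mu,u)|+|L(t,\mu,u)-L(s,\mu,u)|.
\]
By condition~\ref{control:cond:L}, the first term is bounded by $C_L W_2(m_t,\mu)$, and by Proposition~\ref{app:prop:bounds} we have $W_2(m_t,\mu)\leq c_2(1+\varsigma(\mu))(t-s)\leq c_2(1+\varsigma(\mu))h$, so this contribution is at most $C_L c_2(1+\varsigma(\mu))h$, which vanishes as $h\downarrow 0$.

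For the second term, the only subtle point is that $L$ is merely continuous (not assumed uniformly continuous on the whole of $[0,T]\times\mathcal{P}(\rd)\times U$). However, the set $[0,T]\times\{\mu\}\times U$ is compact (since $U$ is compact by~\ref{control:cond:U}), so $L$ is uniformly continuous on it. Hence there is a modulus $\omega_L^\mu(\cdot)$, vanishing at zero, such that
\[
|L(t,\mu,u)-L(s,\mu,u)|\leq \omega_L^\mu(|t-s|)\leq \omega_L^\mu(h)
\]
uniformly in $u\in U$. Combining both bounds and using that $\zeta$ is a probability measure, the original difference is at most $C_L c_2(1+\varsigma(\mu))h+\omega_L^\mu(h)\to 0$ as $h\downarrow 0$, which is the claim.

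The main (and essentially only) obstacle is justifying the continuity-in-time estimate uniformly in $u\in U$; this is handled cleanly by reducing to the compact set $[0,T]\times\{\mu\}\times U$. Everything else is a direct consequence of the Lipschitz assumption~\ref{control:cond:L} together with the already-established $W_2$-estimate of Proposition~\ref{app:prop:bounds}.
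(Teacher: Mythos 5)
Your proof is correct, but it does take a genuinely different route from the paper. The paper's proof is a two-line appeal to Corollary~\ref{app:corollary:compact}: the set $\{m_t:t\in[s,T]\}$ is relatively compact in $\prd$, so $L$ is uniformly continuous on the compact set $[s,T]\times\overline{\{m_t\}}\times U$, and since $(t,m_t)\to(s,\mu)$ as $t\downarrow s$, the integrand converges to $L(s,\mu,u)$ uniformly in $u$ and the time average follows. You instead split $|L(t,m_t,u)-L(s,\mu,u)|$ into a measure increment, controlled by the Lipschitz constant $C_L$ from~\ref{control:cond:L} together with the explicit bound $W_2(m_t,\mu)\leq c_2(1+\varsigma(\mu))|t-s|$ from Proposition~\ref{app:prop:bounds}, and a time increment at the frozen measure $\mu$, controlled by uniform continuity of $L$ on the elementary compact set $[0,T]\times\{\mu\}\times U$. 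The trade-off: the paper's proof is shorter and uses a tool (Corollary~\ref{app:corollary:compact}) already established for other purposes; yours avoids the uniform-integrability machinery behind that corollary entirely, leans more heavily on the Lipschitz-in-measure hypothesis, and has the side benefit of producing an explicit convergence rate $C_Lc_2(1+\varsigma(\mu))h+\omega_L^\mu(h)$. Both are sound.
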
 
\begin{proof}
	The proposition directly follows from the assumption that $L$ is continuous (see assumption~\ref{control:cond:L}) and the fact that $m_t$ for ${t\in [s,T]}$ lies in a compact set. The latter is due to~Corollary~\ref{app:corollary:compact}.
\end{proof} 

Furthermore, for each $h$ we consider a relaxed control $\xi^h\in\mathcal{U}_{s,s+h}$. One can define an averaging of $\xi^h$ w.r.t.\ the time interval $[s,s+h]$ that is a probability $\zeta^h\in \mathcal{P}(U)$ as follows: for each $\psi\in C(U)$,
\[\int_U\psi(u)\zeta^h(du)\triangleq \frac{1}{h}\int_{[s,s+h]\times U}\psi(u)\xi^h(d(t,u)).\] Since $\mathcal{P}(U)$ is compact, there exists a sequence $\{h_n\}_{n=1}^\infty$ and a probability $\zeta\in\mathcal{P}(U)$ such that $h_n\rightarrow 0$, $\zeta^{h_n}\rightarrow \zeta$ as $n\rightarrow\infty$. For simplicity, we denote 
\[\xi^{(n)}\triangleq \xi^{h_n},\, \zeta^{(n)}\triangleq \zeta^{h_n},\, m^{(n)}_\cdot\triangleq m_\cdot[s,\mu,\xi^{(n)}]. \]
Additionally, we put
\[v^{(n)}(y)\triangleq \frac{1}{h^n}\int_s^{s+h_n}\int_U f(t,X^{s,t}_{m^{(n)}_\cdot}(y),m^{(n)}_t,u)\xi^{(n)}(du|t)dt,\] \[ v(y)\triangleq \int_U f(s,y,\mu,u)\zeta(du).\]

\begin{proposition}\label{app:prop:v_xi_zeta_n} One has that 
	\[\|v^{(n)}-v\|_{\LTwo{\mu}}\rightarrow 0\text{ as }n\rightarrow\infty.\]
\end{proposition}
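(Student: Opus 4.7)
The plan is to decompose $v^{(n)}(y)-v(y)$ into two summands, one encoding the time-averaging error coming from $h_n\to 0$ and from the convergence of the trajectories, and one encoding the narrow convergence $\zeta^{(n)}\to\zeta$. Using the definition of the averaged control $\zeta^{(n)}$, we can rewrite
\[
v^{(n)}(y)-v(y)=R_1^{(n)}(y)+R_2^{(n)}(y),
\]
where
\[
R_1^{(n)}(y)\triangleq \frac{1}{h_n}\int_s^{s+h_n}\!\!\int_U\!\bigl[f(t,X^{s,t}_{m_\cdot^{(n)}}(y),m_t^{(n)},u)-f(s,y,\mu,u)\bigr]\xi^{(n)}(du|t)\,dt,
\]
\[
R_2^{(n)}(y)\triangleq \int_U f(s,y,\mu,u)\bigl[\zeta^{(n)}-\zeta\bigr](du).
\]

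First I would bound $R_1^{(n)}$ pointwise. Splitting the increment in $f$ through the intermediate point $(s,X^{s,t}_{m_\cdot^{(n)}}(y),m_t^{(n)},u)$, using assumption~\ref{control:cond:f_omega} for the time shift and assumption~\ref{control:cond:f_Lip} for the Lipschitz dependence on $x$ and $m$, and then applying the uniform estimates from Proposition~\ref{app:prop:bounds} to control $|X^{s,t}_{m_\cdot^{(n)}}(y)|$, $\varsigma(m_t^{(n)})$, $|X^{s,t}_{m_\cdot^{(n)}}(y)-y|$ and $W_2(m_t^{(n)},\mu)$, I obtain an estimate of the form
\[
|R_1^{(n)}(y)|\le \omega_f(h_n)\cdot K(1+|y|+\varsigma(\mu))+C_f(c_2+c_4)(1+|y|+\varsigma(\mu))\,h_n,
\]
for some constant $K$ depending on $c_1,c_3$. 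Hence $R_1^{(n)}\to 0$ pointwise, and the right-hand side is dominated by an $\LTwo{\mu}$-function independent of $n$ (since $\mu\in\prd$). The Minkowski integral inequality, or equivalently dominated convergence, then yields $\|R_1^{(n)}\|_{\LTwo{\mu}}\to 0$.

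For $R_2^{(n)}$, the bound \eqref{control:ineq:sublinear} gives $|R_2^{(n)}(y)|\le 2C_1(1+|y|+\varsigma(\mu))$, again dominated in $\LTwo{\mu}$. For each fixed $y$, narrow convergence $\zeta^{(n)}\to\zeta$ together with continuity and boundedness of $u\mapsto f(s,y,\mu,u)$ on the compact $U$ (inherent in the running assumption that $f$ is jointly continuous with the moduli of \ref{control:cond:f_omega}--\ref{control:cond:f_Lip}) implies $R_2^{(n)}(y)\to 0$. Dominated convergence yields $\|R_2^{(n)}\|_{\LTwo{\mu}}\to 0$.

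The main obstacle I anticipate is the $R_2^{(n)}$ term. Narrow convergence of $\zeta^{(n)}$ supplies only pointwise-in-$y$ control of $R_2^{(n)}$, so the $\LTwo{\mu}$ conclusion must be obtained through a dominated-convergence argument rather than a uniform one; the bound \eqref{control:ineq:sublinear}, together with $\mu\in\prd$, is precisely what makes this domination integrable and completes the proof.
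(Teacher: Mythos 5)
Your proposal is correct and rests on the same decomposition as the paper's proof: the paper introduces the intermediate velocity $\tilde{v}^{(n)}(y)\triangleq\int_U f(s,y,\mu,u)\zeta^{(n)}(du)$, and your $R_1^{(n)}=v^{(n)}-\tilde{v}^{(n)}$, $R_2^{(n)}=\tilde{v}^{(n)}-v$ are exactly the two differences the paper estimates. For $R_1^{(n)}$ your argument (split through the intermediate point, apply \ref{control:cond:f_omega} and \ref{control:cond:f_Lip}, invoke Proposition~\ref{app:prop:bounds}, then Minkowski) matches the paper's bound $\|\tilde{v}^{(n)}-v^{(n)}\|_{\LTwo{\mu}}\le c_5\omega_f(h_n)+c_6 h_n$. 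The genuine divergence is in how $R_2^{(n)}$ is handled: the paper truncates to a ball $\mathbb{B}_R$ (choosing $R$ via the integrability of $(1+|y|+\varsigma(\mu))^2$ against $\mu$), covers $\mathbb{B}_R$ by a finite $\alpha$-net $\{z_k^\varepsilon\}$, applies narrow convergence at each net point, and propagates the estimate to all $y\in\mathbb{B}_R$ using the Lipschitz dependence of $f$ on $x$ — a quantitative, $\varepsilon$-$N$ style argument; you instead observe the pointwise convergence $R_2^{(n)}(y)\to 0$ for each fixed $y$ (narrow convergence tested against the bounded continuous function $u\mapsto f(s,y,\mu,u)$ on the compact $U$), dominate $|R_2^{(n)}(y)|^2$ by the $\mu$-integrable function $4C_1^2(1+|y|+\varsigma(\mu))^2$ from~\eqref{control:ineq:sublinear}, and conclude by dominated convergence. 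Both routes are valid; yours is a bit shorter and more standard, while the paper's is more constructive and avoids relying on dominated convergence at the cost of the covering bookkeeping. One small point worth making explicit, as you do: both arguments tacitly use continuity of $u\mapsto f(s,y,\mu,u)$, which is not listed among (C1)--(C5) but is implicitly assumed throughout (e.g.\ in defining the Hamiltonian~\eqref{Bellman:intro:Hamiltonian}).
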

\begin{proof}
	First, for each $n$, we define a velocity field $\tilde{v}^{(n)}$ by the rule:
	\[\tilde{v}^{(n)}\triangleq \int_U f(s,y,\mu,u)\zeta^n(du).\] Conditions~\ref{control:cond:f_omega} and~\ref{control:cond:f_Lip} give that 
	\begin{equation}\label{app:ineq:v_n_tilde_v}
		\|\tilde{v}^{(n)}-v^{(n)}\|_{\LTwo{\mu}}\leq c_5\omega_f(h^n)+c_6h^n
	\end{equation} Here $c_5$ and $c_6$ are constants determined by the probability $\mu$. It remains to prove that $\|\tilde{v}^{(n)}-v\|_{\LTwo{\mu}}$ converges to zero. To this end, we choose $\varepsilon>0$. Since $\mu\in\prd$, there exists $R$ satisfying the following condition 
	\begin{equation}\label{app:ineq:choice_R}\int_{|y|\geq R}(C_1)^2(1+|y|+\varsigma(\mu))^2\mu(dy)\leq \frac{1}{6}\varepsilon^2.\end{equation} Here $C_1$ is a constant from the sublinear growth condition on the function $f$ (see~\eqref{control:ineq:sublinear}). Now let \begin{equation}\label{app:intro:alpha}\alpha\triangleq \varepsilon/\sqrt{27 C_f}.\end{equation} There exists a finite $\alpha$-net for the ball $\mathbb{B}_R(0)\subset \rd$. We denote this net by $\{z_k^\varepsilon\}_{k=1}^K$. By the Lipschitz continuity of the function $f$, one has that, if $y\in \mathbb{B}_\varepsilon(z_k^\varepsilon)$, then 
	\[|f(s,y,\mu,u)-f(s,z_k^\varepsilon,\mu,u)|\leq C_f|y-z_k^\varepsilon|\leq C_f\alpha.\] 
	Now, for each $k$ there exists $N_k$ such that 
	\[\Bigg|\int_U f(s,z_k^\varepsilon,\mu,u)\zeta^{(n)}(du)-\int_U f(s,z_k^\varepsilon,\mu,u)\zeta(du)\Bigg|\leq C_f\alpha.\] Therefore, for $n\geq N\triangleq \max\{N_1,\ldots,N_K\}$ and every $y\in \mathbb{B}_\varepsilon(z_k^\varepsilon)$,
	\[\begin{split}
		\Bigg|\int_U f(s,y,\mu,u)\zeta^{(n)}&(du)-\int_U f(s,y,\mu,u)\zeta(du)\Bigg|\\ \leq 
		\Bigg|\int_U f(s,y,\mu,&u)\zeta^{(n)}(du)-\int_U f(s,z_k^\varepsilon,\mu,u)\zeta^{(n)}(du)\Bigg|\\+\Bigg|\int_U f(s&,y,\mu,u)\zeta(du)-\int_U f(s,z_k^\varepsilon,\mu,u)\zeta(du)\Bigg|\\+
		\Bigg|&\int_U f(s,z_k^\varepsilon,\mu,u)\zeta^{(n)}(du)-\int_U f(s,z_k^\varepsilon,\mu,u)\zeta(du)\Bigg|\leq 3C_f\alpha.
	\end{split}\] Therefore,
	\begin{equation}\label{app:ineq:R_inside}\begin{split}
			\int_{\mathbb{B}_R} |\tilde{v}^{(n)}(y)&-v(y)|^2\mu(dy)\\=
			\int_{\mathbb{B}_R} &\Bigg|\int_U f(s,y,\mu,u)\zeta^{(n)}(du)-\int_U f(s,y,\mu,u)\zeta(du)\Bigg|^2\mu(du)\leq 9C_f\alpha^2
	\end{split}\end{equation} Furthermore, 
	\[
	\begin{split}
		\int_{\rd} |\tilde{v}^{(n)}&(y)-v(y)|^2\mu(dy)\\\leq &\int_{\mathbb{B}_R} |\tilde{v}^{(n)}(y)-v(y)|^2\mu(dy)+2\int_{|y|>R}(|\tilde{v}^{(n)}(y)|^2+|v(y)|^2)\mu(dy)
	\end{split}
	\] Taking into account estimate~\eqref{control:ineq:sublinear} as well as the  choices of $R$ and $\alpha$ (see~\eqref{app:ineq:choice_R},~\eqref{app:intro:alpha}), we conclude that the second term is bounded by $\frac{2}{3}\varepsilon^2$, while the first term due to~\eqref{app:ineq:R_inside} is bounded by $\frac{1}{3}\varepsilon^2$. This means that, given $\varepsilon>0$ one can find $N$ such that, for every $n>N$,
	\[\|\tilde{v}^{(n)}-v\|_{\LTwo{\mu}}\leq \varepsilon.\] This together with~\eqref{app:ineq:v_n_tilde_v} gives the desired convergence.
\end{proof}
\begin{proposition}\label{app:prop:L_xi_zeta} The following convergence holds true:
	\[\frac{1}{h_n}\int_s^{s+h_n}\int_U L(t,m^{(n)}_t,u)\xi^{(n)}(du|t)dt\rightarrow \int_U L(t,m^{(n)}_t,u)\zeta(du).\] 
\end{proposition}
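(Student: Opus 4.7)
Interpreting the right-hand side of the statement as $\int_U L(s,\mu,u)\zeta(du)$ (the version that matches how this proposition is used in the proof of Theorem~\ref{Bellman:th:characterization}), the natural approach is to split the difference via the triangle inequality into
\[
\bigg|\frac{1}{h_n}\int_s^{s+h_n}\!\!\int_U L(t,m_t^{(n)},u)\xi^{(n)}(du|t)\,dt-\int_U L(s,\mu,u)\zeta(du)\bigg|\le I_n+II_n,
\]
where
\[
I_n\triangleq \frac{1}{h_n}\int_s^{s+h_n}\!\!\int_U\!\big|L(t,m_t^{(n)},u)-L(s,\mu,u)\big|\xi^{(n)}(du|t)\,dt,
\]
\[
II_n\triangleq \bigg|\int_U L(s,\mu,u)\zeta^{(n)}(du)-\int_U L(s,\mu,u)\zeta(du)\bigg|.
\]

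\textbf{Step 1.} Control $I_n$ by decomposing the integrand as
\[
|L(t,m_t^{(n)},u)-L(s,\mu,u)|\le |L(t,m_t^{(n)},u)-L(s,m_t^{(n)},u)|+|L(s,m_t^{(n)},u)-L(s,\mu,u)|.
\]
The second term is bounded by $C_L W_2(m_t^{(n)},\mu)\le C_L c_2(1+\varsigma(\mu))h_n$ thanks to condition~\ref{control:cond:L} and Proposition~\ref{app:prop:bounds}. For the first term, Corollary~\ref{app:corollary:compact} provides a compact set $K\subset\prd$ containing every $m_t^{(n)}$; since $[0,T]\times K\times U$ is compact and $L$ is continuous there, it is uniformly continuous, yielding a modulus $\omega_L(\cdot)$ with $|L(t,m,u)-L(s,m,u)|\le \omega_L(|t-s|)$ for all $m\in K$ and $u\in U$. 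Consequently $I_n\le C_L c_2(1+\varsigma(\mu))h_n+\omega_L(h_n)\to 0$.

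\textbf{Step 2.} Handle $II_n$ by invoking the defining property of the averaged probability: for every $\psi\in C(U)$,
\[
\int_U\psi(u)\zeta^{(n)}(du)=\frac{1}{h_n}\int_s^{s+h_n}\!\!\int_U\psi(u)\xi^{(n)}(du|t)\,dt.
\]
Applying this to $\psi(u)\triangleq L(s,\mu,u)$, which is continuous and bounded on the compact set $U$, and using the narrow convergence $\zeta^{(n)}\to\zeta$ established just before the statement, gives $II_n\to 0$. Combining the two steps concludes the proof.

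\textbf{Main obstacle.} The only subtlety is the uniformity required in Step~1: one needs the time-modulus of $L$ to be uniform in $u\in U$ and in the measure argument as it varies over the family $\{m_t^{(n)}\}$. Compactness of this family (Corollary~\ref{app:corollary:compact}) together with compactness of $U$ and mere continuity of $L$ are sufficient, so no strengthening of~\ref{control:cond:L} is needed.
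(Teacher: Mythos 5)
Your proof is correct and follows essentially the same route as the paper: bound $|L(t,m^{(n)}_t,u)-L(s,\mu,u)|$ by a quantity $\beta_n\to 0$ (the paper lumps your Step~1 into a single $\beta_n$, you make the Lipschitz-in-measure and compactness-in-time pieces explicit), then reduce the first term to $\int_U L(s,\mu,u)\,\zeta^{(n)}(du)\to\int_U L(s,\mu,u)\,\zeta(du)$ via the definition of $\zeta^{(n)}$ and narrow convergence. You also correctly identified that the displayed limit in the statement should read $\int_U L(s,\mu,u)\,\zeta(du)$, matching how the proposition is invoked in Theorem~\ref{Bellman:th:characterization}.
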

\begin{proof}
	Recall that $L$ is continuous, and $W_2(m^{(n)}_t,\mu)\leq c_7h$, where $c_7$ is a constant dependent on $\mu$ (see Proposition~\ref{app:prop:bounds}). Hence, for each $t\in [s,s+h_n]$,
	\[|L(t,m^{(n)}_t,u)-L(s,\mu,u)|\leq \beta_n,\] where $\beta_n\rightarrow 0$ as $n\rightarrow\infty$. This implies
	\[\begin{split}
		\Bigg|\frac{1}{h_n}&\int_s^{s+h_n}\int_U L(t,m^{(n)}_t,u)\xi^{(n)}(du|t)dt- \int_U L(t,m^{(n)}_t,u)\zeta(du)\Bigg|\\&\leq \Bigg|\frac{1}{h_n}\int_s^{s+h_n}\int_U L(s,\mu,u)\xi^{(n)}(du|t)dt- \int_U L(t,m^{(n)}_t,u)\zeta(du)\Bigg|+\beta_n.\end{split}\] The first term tends to zero due to the very definition of the probabilities $\zeta^{(n)}$ and the fact that the sequence $\{\zeta^{(n)}\}_{n=1}^\infty$ converges narrowly to $\zeta$.
\end{proof}

\bibliography{feedback_cont_3.bib}

\end{document}